\newtheorem{theorem}{Theorem}[section]
\newtheorem{proposition}[theorem]{Proposition}
\newtheorem{corollary}[theorem]{Corollary}
\newtheorem{lemma}[theorem]{Lemma}
\newtheorem*{definition*}{Definition}
\newcommand{\rspex}{\mathrm{spex}}
\newcommand{\raspex}{\mathrm{spex}_{\alpha}}
\title{A general theorem in spectral extremal graph theory}
\author{John Byrne\thanks{University of Delaware, Department of Mathematical Sciences, \texttt{jpbyrne@udel.edu}} \and Dheer Noal Desai\thanks{University of Wyoming, Department of Mathematics and Statistics.
The University of Memphis, Department of Mathematical Sciences, \texttt{dndesai@memphis.edu}} \and Michael Tait\thanks{Villanova University, Department of Mathematics \& Statistics, \texttt{michael.tait@villanova.edu}. Research partially supported by NSF grant DMS-2245556.}}
\begin{document}

\maketitle

\begin{abstract}
   The extremal graphs $\mathrm{EX}(n,\mathcal F)$ and spectral extremal graphs $\mathrm{SPEX}(n,\mathcal F)$ are the sets of graphs on $n$ vertices with maximum number of edges and maximum spectral radius, respectively, with no subgraph in $\mathcal F$. We prove a general theorem which allows us to characterize the spectral extremal graphs for a wide range of forbidden families $\mathcal F$ and implies several new and existing results. In particular, whenever $\mathrm{EX}(n,\mathcal F)$ contains the complete bipartite graph $K_{k,n-k}$ (or certain similar graphs) then $\mathrm{SPEX}(n,\mathcal F)$ contains the same graph when $n$ is sufficiently large. We prove a similar theorem which relates $\mathrm{SPEX}(n,\mathcal F)$ and $\mathrm{SPEX}_\alpha(n,\mathcal F)$, the set of $\mathcal F$-free graphs which maximize the spectral radius of the matrix $A_\alpha=\alpha D+(1-\alpha)A$, where $A$ is the adjacency matrix and $D$ is the diagonal degree matrix.
\end{abstract}

\section{Introduction} \label{Section Intro}

For a family of graphs $\mathcal{F}$ and $\alpha \in [0,1)$, we define $\mathrm{ex}(n, \mathcal{F})$, $\mathrm{spex}(n,\mathcal{F})$, and $\mathrm{spex}_\alpha(n, \mathcal{F})$ to be the maximum number of edges, maximum spectral radius of the adjacency matrix, and maximum spectral radius of the matrix $A_\alpha$ respectively over all $n$-vertex graphs which do not contain any $F\in \mathcal{F}$ as a subgraph. Here $A_\alpha = \alpha D + (1-\alpha)A$ where $D$ is the diagonal degree matrix and $A$ is the adjacency matrix. So $\mathrm{spex}(n, \mathcal{F}) = \mathrm{spex}_0 (n, \mathcal{F})$ and $\mathrm{spex}_{1/2}(n,\mathcal{F})$ is one half of the maximum spectral radius of the signless Laplacian over $\mathcal{F}$-free graphs. As is standard, we define $\mathrm{EX}(n, \mathcal{F})$, $\mathrm{SPEX}(n, \mathcal{F})$, and $\mathrm{SPEX}_\alpha(n, \mathcal{F})$ as the set of extremal graphs for each respective function, and we call the graphs in each set \textit{(edge) extremal, spectral extremal,} and \textit{alpha spectral extremal}, respectively. When $\mathcal{F} = \{F\}$ is a single graph we use the standard notation $\mathrm{ex}(n, F)$ instead of $\mathrm{ex}(n, \{F\})$ and similarly with the other functions. If the family of extremal graphs consists of a single graph $H$ we will write $\mathrm{EX}(n, \mathcal{F}) = H$ instead of $\mathrm{EX}(n, \mathcal{F}) = \{H\}$, and similarly with the other functions.  Our goal in this paper is to prove general theorems which capture many previous and new results on these parameters.

Spectral extremal graph theory has become very popular recently. Before the field was studied systematically, several sporadic results appeared over the years, for example, by Wilf \cite{Wilf86} and Nosal \cite{nosal1970eigenvalues}. All results in the area which regard the adjacency matrix can be understood under the general framework of {\em Brualdi-Solheid} problems \cite{BS}, where one seeks to maximize the spectral radius  of the adjacency matrix over a family of graphs. When the family is defined by a forbidden subgraph, this problem has a Tur\'an-type flavor, and the spectral Tur\'an problem was introduced systematically by Nikiforov \cite{nikiforov2010spectral}. Since then many papers have been written, and we refer to the excellent recent survey \cite{LLFsurvey}. We also note that there are hypergraph versions of these problems (see for example \cite{KLM, Nikiforovhypergraphs}), but we only consider graphs in this paper.

Researchers have also frequently considered other matrices besides the adjacency matrix. Signless Laplacian versions of Brualdi-Solheid problems were studied extensively and Nikiforov introduced the $A_\alpha$ matrix as a way to merge these two theories \cite{nikiforov2017merging}.

With so much recent activity in this area, we believe that it is valuable to have general theorems that apply to families of forbidden graphs. One question that naturally presents itself when reviewing previous results is: when are the edge-extremal graphs the same as the spectral-extremal graphs? That is, when do $\mathrm{EX}(n, F)$ and $\mathrm{SPEX}_\alpha(n, F)$ either coincide or overlap?

Some of the first results in the area suggest that perhaps the problems are the same. For example, the spectral Tur\'an theorem \cite{Nikiforov07} shows $\mathrm{EX}(n, K_{r+1}) = \mathrm{SPEX}(n, K_{r+1})$, and \cite{nikiforov2008spectral} shows that $\mathrm{EX}(n, C_{2k+1}) = \mathrm{SPEX}(n, C_{2k+1}) = K_{\lfloor n/2\rfloor, \lceil  n/2\rceil}$ for large enough $n$. Furthermore, for any non-bipartite $F$, the theorems in \cite{ES66, ES46, nikiforov2009spectral} show that the asymptotics of the two functions coincide. However, the two problems can also be vastly different, for example when $F$ is an even cycle \cite{cioabua2022evencycle, nikiforov2010spectral, zhai2020spectralhexagon, zhai2012proof}. One might then guess that there is a difference between $F$ bipartite and non-bipartite, but it is a bit more subtle than that. In \cite{cioabua2022spectral}, it was shown that when forbidding large enough odd wheels the two extremal families have similar structure but are in fact disjoint. 

Hence we seek general criteria which can determine whether the extremal families overlap or not. We mention a few very nice papers along these lines. The first result is in \cite{wang2023conjecture}, proving a conjecture in \cite{cioabua2022spectral}, where it was shown that for any graph $F$ such that any graph in $\mathrm{EX}(n, F)$ is a Tur\'an graph plus $O(1)$ edges, then for large enough $n$ one has that $\mathrm{SPEX}(n, F) \subseteq \mathrm{EX}(n, F)$. Second, in \cite{NWK} a general theorem was proved when forbidding disjoint copies of a graph and in \cite{FL} a general theorem was proved when forbidding edge blow-ups of graphs. Finally, in the very recent paper \cite{FLSZ} the authors prove general results about trees.

In this paper we prove some general results which apply to many forbidden graphs or families. As special cases of our theorems, we recover results from \cite{CLLYZ, CLZ, CLZ2, CLZ3, CLZ4, CLZ5, cioabua2022erdossos, csikvari, FLSZ, FYZ, GH, HLF, LY, LYZ, LZ, LK, nikiforov2010spectral, T, tait2017three, TCC, YS, WZ, ZHW} and additionally we can apply the theorems to prove several new results. More details about specific applications are given after statements of our theorems.

When studying a classical Tur\'an-type problem, every edge increases the objective function by the same amount. But when trying to determine $\mathrm{spex}(n, F)$, some edges will contribute more to the spectral radius than others, and so vertices of large degree are incentivized. When $\alpha>0$, the presence of the diagonal degree matrix incentivizes large degree vertices even further. This paper is an attempt to formalize this phenomenon. 

The rest of this paper is organized as follows. In the rest of Section \ref{Section Intro} we define our notation and state our main results. In Section \ref{Section EX SPEX} we prove results relating $\mathrm{EX}(n,\mathcal F)$ and $\mathrm{SPEX}(n,\mathcal F)$. In Section \ref{Section SPEX ALPHA SPEX}, we prove results relating $\mathrm{SPEX}(n,\mathcal F)$ and $\mathrm{SPEX}_\alpha(n,\mathcal F)$. In Section \ref{Section Conclusion} we give concluding remarks.

\subsection{Notation}

For graphs $G$ and $H$, $\overline{G}$ denotes the complement of $G$, $G+H$ denotes the join of $G$ and $H$, $G\cup H$ denotes their disjoint union, and $a\cdot G$ denotes the disjoint union of $a$ copies of $G$; if $a=\infty$ then this means countably many copies. We write $H\subseteq G$ if $H$ is a subgraph of $G$. We write $\Delta(G)$ for the maximum degree of $G$. We write $A=A(G)$ for the adjacency matrix, $D=D(G)$ for the diagonal degree matrix, and $A_\alpha=A_\alpha(G)=\alpha D(G)+(1-\alpha)A(G)$. For a symmetric matrix $M$ we use $\lambda_1(M)$ to mean the largest eigenvalue of the matrix, and for a graph $G$ we write $\lambda(G)$ and $\lambda_\alpha(G)$ for $\lambda_1(A(G))$ and $\lambda_1(A_\alpha(G))$, respectively. For $v\in V(G)$ and $U\subseteq V(G)$, we write $N(v)$ for the neighborhood of $v$, $N_U(v)$ for $N(v)\cap U$, and $d_U(v)$ for $|N_U(v)|$. For $i\in\mathbb N$, we write $N_i(v)$ for the set of vertices at distance $i$ from $v$. For $U,W\subseteq V(G)$, we write $e(U,W)$ for $|\{(u,w):u\in U,w\in W,u\sim w\}|$, and $e(U)$ for $e(U,U)/2$; we write $E(U,W)$ and $E(U)$ for the corresponding sets of edges. We write $G[U]$ for the subgraph induced by $U$ and if $U\cap W=\emptyset$ we write $G[U,W]$ for the bipartite subgraph on vertices $U\sqcup W$ and with all edges in $G$ between $U$ and $W$. Our notation for specific classes of graphs is standard; we note that $P_n$ denotes the path of \textit{order} $n$, $M_n$ is a maximum matching on $n$ vertices, and $K_{m,\infty}$ means that the larger partite set has a countable set of vertices. Sometimes we will consider the graph on $n$ vertices which is the disjoint union of a maximum number of copies of $G$, plus isolated vertices in the remainder, and we will call this the \textit{maximal union} of $G$. 

Let $\mathcal F$ be a family of graphs and let $G$ be a graph. We denote by $\mathrm{ex}^G(n,\mathcal F)$ the maximum number of edges in an $\mathcal F$-free graph which contains a copy of $G$, and we write $\mathrm{EX}^G(n,\mathcal F)$ for the corresponding extremal graphs. We write $\mathrm{ex}_c(n,\mathcal F)$ for the \textit{connected extremal number} of $\mathcal F$, the maximum number of edges in a connected $\mathcal F$-free graph, and $\mathrm{EX}_c(n,\mathcal F)$ for the set of corresponding extremal graphs.

\subsection{Main results}

Our first goal is to relate the extremal graphs and spectral extremal graphs. In \cite{ZYY} it was proven that, if $\mathrm{EX}(n,F)=K_k+\overline{K_{n-k}}$ or $\mathrm{EX}(n,F)=K_k+(K_2\cup\overline{K_{n-k-2}})$, then $\mathrm{SPEX}(n,F)=\mathrm{EX}(n,F)$ if $n$ is large enough. The following theorem generalizes this result (see the discussion following Proposition \ref{counterexample}.) Its statement is technical, so an informal summary of its meaning is in order. For the spectral Tur\'an function, not every edge contributes the same amount to the objective function. Edges whose vertices have more eigenweight contribute more, and since large degree vertices tend to have more eigenweight, vertices of large degree are incentivized. We find that if $\mathrm{ex}(n,\mathcal F)=O(n)$, then the spectral extremal graphs want to have as many dominating (or almost-dominating) vertices as possible. Thus, if $G\in\mathrm{SPEX}(n,\mathcal F)$, we will have $G\supseteq K_{k,n-k}$, where $k$ is the largest integer $j$ such that $K_{j,n-j}$ is $\mathcal F$-free. If $K_{k,n-k}$ is $\mathcal F$-saturated, then it follows that $\mathrm{SPEX}(n,\mathcal F)=\{K_{k,n-k}\}$, which is case (a) of Theorem \ref{Thm big ex spex}. If $K_{k,n-k}$ is not $\mathcal F$-saturated, then one may ask how many edges can be added while remaining $\mathcal F$-free. Under some conditions (corresponding to cases (b)-(f)), the answer to this question is enough to determine $\mathrm{SPEX}(n,\mathcal F)$.

\begin{theorem} \label{Thm big ex spex}
Let $\mathcal F$ be a family of graphs. Suppose that $\mathrm{ex}(n,\mathcal F)=O(n)$, $K_{k+1,\infty}$ is not $\mathcal F$-free, and for $n$ large enough $\mathrm{EX}^{K_{k,n-k}}(n,\mathcal F)\ni H$, where one of the following holds:
\begin{itemize}
    \item[(a)] $H=K_{k,n-k}$
    \item[(b)] $H=K_k+\overline{K_{n-k}}$
    \item[(c)] $H=K_k+(K_2\cup\overline{K_{n-k-2}})$
    \item[(d)] $H=K_k+X$ where $e(X)\le Qn+O(1)$ for some $Q\in[0,3/4)$ and $\mathcal F$ is finite
    \item[(e)] $H=K_k+X$ where $e(X)\le Qn+O(1)$ for some $Q\in[3/4,\infty)$ and $\mathcal F$ is finite
    \item[(f)] (e) holds, all but a bounded number of vertices of $X$ have constant degree $d$, $K_k+(\infty\cdot K_{1,d+1})$ is not $\mathcal F$-free. 
\end{itemize}
If (a), (b), or (c) holds then for $n$ large enough, $\mathrm{SPEX}(n,\mathcal F)=H$.

If (d) or (f) holds then for $n$ large enough, $\mathrm{SPEX}(n,\mathcal F)\subseteq\mathrm{EX}^{K_k+\overline{K_{n-k}}}(n,\mathcal F)$. 

If (e) holds then for $n$ large enough all graphs in $\mathrm{SPEX}(n,\mathcal F)$ are of the form $K_k+Y$, where $e(Y)\ge e(X)-O(n^{1/2})$ and $\Delta(Y)$ is bounded in terms of $\mathcal F$.
\end{theorem}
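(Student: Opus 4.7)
The plan is to analyze a spectral extremal graph $G^* \in \mathrm{SPEX}(n,\mathcal{F})$ via its Perron eigenvector $\mathbf{x}$, show that $G^*$ has a \emph{spectral core} of exactly $k$ vertices dominating all others, and then reduce the question to an edge-extremal problem on the non-core vertices. Normalize $\mathbf{x}$ so $\max_v x_v = 1$, attained at some vertex $u^*$, and let $\lambda := \lambda(G^*)$. Since $K_{k,n-k} \subseteq H$ is $\mathcal{F}$-free, eigenvalue interlacing gives $\lambda \geq \sqrt{k(n-k)} = \Omega(\sqrt{n})$, and the eigenvector equation at $u^*$ gives $d(u^*) \geq \lambda = \Omega(\sqrt{n})$. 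Iterating the eigenvector equation and invoking $\mathrm{ex}(n,\mathcal{F}) = O(n)$ controls the number of high-weight vertices.

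First I would isolate the core $L$ of $k$ vertices with large Perron weight. Setting $L := \{v : x_v \geq c\}$ for a suitable constant $c>0$, the upper bound $|L| \leq k$ follows because $K_{k+1,\infty}$ is not $\mathcal{F}$-free: any $k+1$ vertices with $\Omega(n)$ common neighbors in $G^*$ would produce a copy of $K_{k+1,m}$ for every fixed $m$, contradicting $\mathcal{F}$-freeness. The lower bound $|L| \geq k$ comes from the spectral lower bound combined with the linear edge bound. A Kelmans-style perturbation argument then shows that every vertex outside $L$ is adjacent to every vertex in $L$: if some $v \notin L$ were not adjacent to some $u_i \in L$, rerouting a low-weight edge at $v$ to $u_i$ would strictly increase $\lambda$ while preserving $\mathcal{F}$-freeness, since the modified graph sits inside a graph with the same core structure as $H$.

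At this point $G^*$ has the form $K_k + Y$ or $\overline{K_k} + Y$, according to whether $K_k + \overline{K_{n-k}}$ is $\mathcal{F}$-free (cases (b)--(f)) or not (case (a)). For cases (a), (b), (c), the number of edges of $G^*$ is bounded by $e(H)$ via $\mathcal{F}$-freeness and spectral monotonicity, and a direct quadratic-form comparison identifies $G^* = H$. For (d)--(f), write $G^* = K_k + Y$ and compare to $K_k + X$ using an asymptotic expansion of the form
\begin{equation*}
\lambda(K_k + Y) \;=\; \sqrt{kn} + \frac{k-1}{2} + \frac{e(Y)}{n} + \Phi(Y) + o(1),
\end{equation*}
where $\Phi(Y)$ is a correction governed by the degree distribution of $Y$, increasing under degree concentration. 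The linear term rewards edges; $\Phi$ rewards concentration. When $Q < 3/4$ (case (d)), $\Phi$ is dominated by the linear term, so maximizing $\lambda$ forces $e(Y) = e(X)$ and hence $G^* \in \mathrm{EX}^{K_k + \overline{K_{n-k}}}(n,\mathcal{F})$. When $Q \geq 3/4$ (case (e)), $\Phi$ can compete; $G^*$ may trade up to $O(\sqrt{n})$ edges for extra degree concentration, but $\Delta(Y) = O_{\mathcal{F}}(1)$ because any unbounded degree in $Y$ would either promote a non-core vertex into $L$ (violating $|L| \leq k$) or create a forbidden subgraph. In (f), the extra hypothesis that $K_k + (\infty \cdot K_{1,d+1})$ is not $\mathcal{F}$-free caps degrees in $Y$ by $d$, eliminating the concentration trade-off and upgrading the conclusion to $\mathrm{EX}^{K_k + \overline{K_{n-k}}}(n,\mathcal{F})$.

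The main obstacle is obtaining the spectral expansion with enough precision to pin down the threshold $Q = 3/4$: this requires a careful Rayleigh-quotient analysis of the Perron vector of $K_k + Y$ as a perturbation of that of $K_k + \overline{K_{n-k}}$, together with a case analysis that separates degree-concentrated pieces of $Y$ (stars) from edge-distributed ones (matchings, paths). The $O(\sqrt{n})$ gap in case (e) appears to be intrinsic: absent further structural assumptions, one can construct $\mathcal{F}$-free graphs trading $\Theta(\sqrt{n})$ edges for a handful of higher-degree vertices with no loss in $\lambda$ to leading order, so equality cannot be forced without the additional hypothesis in (d) or (f).
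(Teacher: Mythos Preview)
Your high-level outline matches the paper's: isolate a core $L'$ of $k$ high-weight vertices, show all other vertices are complete to $L'$, show $L'$ is a clique, then compare $G^*=K_k+Y$ to $K_k+X$. But the mechanism you propose for the comparison step, and hence for the threshold $Q=3/4$, is wrong.

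You attribute the threshold to a competition between a linear-in-$e(Y)$ term and a ``concentration bonus'' $\Phi(Y)$ in an expansion $\lambda(K_k+Y)=\sqrt{kn}+\tfrac{k-1}{2}+\tfrac{e(Y)}{n}+\Phi(Y)+o(1)$. This is not how the argument goes, and the threshold has nothing to do with such a trade-off. The paper first proves a purely combinatorial structural lemma: under the hypotheses of (d), the graph $X$ must be obtained from a maximal disjoint union of $P_1$, $P_2$, or $P_3$ by adding and deleting a bounded number of edges. The value $3/4$ enters because a component of order $\ge 4$ forces edge density $\ge 3/4$ on those vertices, and because for orders $1,2,3$ there is a unique tree, whereas for order $4$ there are two. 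Once $X$ and $Y$ are both near-maximal unions of the same $P_i$, one shows that transforming $Y$ into $X$ requires only a bounded number of edge swaps; since each added edge contributes $\ge(k/\lambda)^2$ and each deleted edge at most $(k/\lambda+O(n^{-1}))^2$ to $x^TAx$, a net gain of one edge is decisive. Your expansion argument, by contrast, would need error terms of order $o(n^{-1})$ to separate graphs differing by a single edge in $Y$, and the $\Phi$ you describe does not exist at the precision you claim.

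For (f), you say the star hypothesis ``caps degrees in $Y$ by $d$, eliminating the concentration trade-off.'' The actual role of the hypothesis is subtler: combined with the conclusion of (e) that $e(X)-e(Y)=O(n^{1/2})$, it forces all but $O(n^{1/2})$ vertices of $Y$ to have degree exactly $d$, which in turn yields the sharper eigenvector estimate $x_v=\tfrac{k}{\lambda}+\tfrac{kd}{\lambda^2}+\tfrac{kd^2}{\lambda^3}+O(n^{-2})$ on ``good'' vertices. This extra precision is what upgrades the $O(n^{1/2})$ edge gap to $O(1)$ and then to $0$, via a two-pass bootstrap. Your sketch does not indicate how to gain this order of precision.

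Finally, your perturbation step (``rerouting a low-weight edge \dots preserves $\mathcal{F}$-freeness'') hides a genuine difficulty. When $\mathcal{F}$ is finite, one argues that any forbidden subgraph created must use the modified vertex and can be relocated to an untouched vertex of $R$. When $\mathcal{F}$ is infinite this fails, and the paper instead iteratively cleans out all exceptional vertices and then, in cases (a)--(c) only, moves the resulting $O(1)$ edges in $R$ into $L'$; this is exactly why (d)--(f) require $\mathcal{F}$ finite. Your sketch does not distinguish these cases.
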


We make two remarks about Theorem \ref{Thm big ex spex}. First, we show in Section \ref{Section EX SPEX} any graph satisfying (d) must have $e(X)=Qn+O(1)$ for some $Q\in\{0,1/2,2/3\}$. Secondly, Theorem \ref{Thm big ex spex} (d) is sharp in that it fails if $Q=3/4$. The reason is that there are two non-isomorphic trees of order $4$.

\begin{proposition} \label{counterexample} There exists a finite family of graphs $\mathcal F$ such that $\mathrm{EX}(n,\mathcal F)=K_2+X$, where $e(X)=3n/4+O(1)$, and for infinitely many $n$, $\mathrm{SPEX}(n,\mathcal F)\cap\mathrm{EX}(n,\mathcal F)=\emptyset$.
\end{proposition}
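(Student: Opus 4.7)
The plan is to construct an explicit finite family $\mathcal F$ witnessing the proposition. The underlying phenomenon, hinted at in the remark preceding the statement, is that there are two non-isomorphic trees on four vertices---namely $P_4$ and $K_{1,3}$---with the same edge count (three) but distinct spectral radii ($(1+\sqrt{5})/2$ and $\sqrt{3}$, respectively). This spectral/edge decoupling is what enables the edge-extremal and spectral-extremal configurations to differ.

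First, I would define $\mathcal F$ as a small explicit family, for instance $\mathcal F = \{K_2+K_3,\, K_2+C_4,\, K_2+P_5,\, K_2+K_{1,4},\, K_2+S(2,3),\, K_{3,3}\}$, where $S(2,3)$ denotes the double star on five vertices. The first five forbidden subgraphs ensure that if an $\mathcal F$-free graph has the form $G = K_2 + X$, then every component of $X$ lies in $\{K_1, K_2, P_3, P_4, K_{1,3}\}$, while the inclusion of $K_{3,3}$ in $\mathcal F$ rules out dense bipartite competitors like $K_{a,n-a}$ for $a \ge 3$ as well as the multi-apex graph $K_3 + \overline{K_{n-3}}$ (which contains $K_{3,3}$ as a subgraph via three apices and any three leaves).

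Next, I would identify $\mathrm{EX}(n,\mathcal F)$ by a direct edge-count: the maximum number of edges in an $\mathcal F$-free graph is $(11n-18)/4 + O(1)$, realized by graphs of the form $K_2 + X$ where $X$ packs 4-vertex tree components optimally. This gives $e(X) = 3n/4 + O(1)$, matching the statement. Verifying that every extremal graph has this form amounts to checking that no $\mathcal F$-free graph with fewer than two universal vertices attains this many edges.

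The heart of the proof---and the step I expect to be the main obstacle---is to exhibit, for infinitely many $n$, an $\mathcal F$-free graph $G_n$ with $\lambda(G_n) > \max_{H \in \mathrm{EX}(n,\mathcal F)} \lambda(H)$ and $G_n \notin \mathrm{EX}(n,\mathcal F)$. Natural spectrally-dominant competitors such as $K_3 + \overline{K_{n-3}}$ are already excluded by $\mathcal F$, and most $\mathcal F$-free graphs lacking two universal vertices have spectral radius strictly smaller than $\lambda(K_2 + X)$. So $G_n$ must be produced through a delicate trade-off: embedding a $K_{1,3}$-type substructure (which locally raises the Perron eigenvalue more than a $P_4$ would) without triggering the forbidden subgraphs, at the cost of a small edge deficit that keeps $G_n$ outside $\mathrm{EX}(n,\mathcal F)$. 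Verifying the strict inequality $\lambda(G_n) > \lambda(K_2 + X_n)$ reduces to comparing the largest roots of the characteristic polynomials obtained from equitable-partition quotient matrices, and ensuring the construction succeeds for infinitely many $n$ pins down a specific arithmetic progression of residues modulo~$4$.
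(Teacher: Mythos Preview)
Your proposed family does not work, for two independent reasons.

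\textbf{First, $\mathrm{ex}(n,\mathcal F)$ is not $O(n)$.} Every member of your family except $K_{3,3}$ contains a triangle (indeed $K_2+H$ always does), so any bipartite $K_{3,3}$-free graph is automatically $\mathcal F$-free. Since $\mathrm{ex}(n,K_{3,3})=\Theta(n^{5/3})$, the edge-extremal $\mathcal F$-free graphs are dense bipartite graphs, not of the form $K_2+X$ at all. Controlling graphs that lack two dominating vertices requires forbidding something like $3\cdot K_{1,4}$ (as the paper does), not just $K_{3,3}$.

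\textbf{Second, and more fundamentally, your family cannot separate $\mathrm{EX}$ from $\mathrm{SPEX}$ even restricted to graphs of the form $K_2+X$.} Observe that $K_{3,3}\subseteq K_2+K_{1,3}$ (take the two apices together with the centre against the three leaves). Hence with $K_{3,3}\in\mathcal F$, every $\mathcal F$-free $K_2+X$ has $X$ built from $\{K_1,K_2,P_3,P_4\}$ only, and both the edge maximum and the spectral maximum over such graphs are attained by $K_2+\tfrac{n-2}{4}\cdot P_4$. If instead you drop $K_{3,3}$, then $K_2+\tfrac{n-2}{4}\cdot K_{1,3}$ and $K_2+\tfrac{n-2}{4}\cdot P_4$ are both $\mathcal F$-free with the \emph{same} edge count, so the spectrally-best one lies in $\mathrm{EX}(n,\mathcal F)$. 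Either way $\mathrm{SPEX}\cap\mathrm{EX}\ne\emptyset$. Your proposal never names a concrete $G_n$, and with this family there is none.

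The missing mechanism is a \emph{conditional} constraint: the paper designs $\mathcal F$ so that a single exceptional component (a $P_8$) is permitted and is edge-denser than the $4$-vertex trees, but its presence forbids $K_{1,3}$ companions (via the graphs $K_{2,6}\cup K_{1,3}\cup X_5$). Thus the unique edge maximiser is $K_2+(P_8\cup\tfrac{n-10}{4}\cdot P_4)$, while $K_2+\tfrac{n-2}{4}\cdot K_{1,3}$ remains $\mathcal F$-free with exactly one fewer edge yet strictly larger spectral radius. The decoupling you correctly identified (that $\lambda(K_{1,3})>\lambda(P_4)$) only yields a counterexample once the family is engineered to break the symmetry between $P_4$ and $K_{1,3}$ at the edge-extremal level without breaking it at the spectral level.
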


We now turn to applications. We would like to make use of the existing literature on the extremal functions $\mathrm{ex}(n,\mathcal F)$ and $\mathrm{ex}_c(n,\mathcal F)$. Thus, note that if $\mathrm{EX}(n,\mathcal F)\ni H\supseteq K_{k,n-k}$ or $\mathrm{EX}_c(n,\mathcal F)\ni H\supseteq K_{k,n-k}$ then $\mathrm{EX}^{K_{k,n-k}}(n,\mathcal F)\ni H$, and moreover in cases (a)-(d) or else if $Q<1$, we have that $K_{k+1,\infty}$ is not $\mathcal F$-free. Furthermore it is not too difficult to show that if $\mathrm{ex}_c(n,\mathcal F)=O(n)$ then $\mathrm{ex}(n,\mathcal F)=O(n)$. This implies that in such cases, if we replace the assumption ``$\mathrm{EX}^{K_{k,n-k}}(n,\mathcal F)\ni H$ and $\mathrm{ex}(n,\mathcal F)=O(n)$'' in Theorem \ref{Thm big ex spex} with either ``$\mathrm{EX}(n,\mathcal F)\ni H$" or ``$\mathrm{EX}_c(n,\mathcal F)\ni H$,'' then we obtain a special case of the result, which generalizes Theorem 1.7 of \cite{ZYY}.
\begin{corollary} \label{ex spex corollary}
    Suppose that $\mathcal F$ is a family of graphs such that, for $n$ large enough, $\mathrm{EX}(n,\mathcal F)\ni H$ (or $\mathrm{EX}_c(n,\mathcal F)\ni H$), where $H$ is one of the graphs in (a)-(d) or (f) of Theorem \ref{Thm big ex spex}, and moreover that $\mathcal F$ is finite in cases (d) and (f) and $Q<1$ in case (f). Then for $n$ large enough, $\mathrm{SPEX}(n,\mathcal F)\subseteq\mathrm{EX}(n,\mathcal F)$ (or $\mathrm{SPEX}(n,\mathcal F)\subseteq\mathrm{EX}_c(n,\mathcal F)$).
\end{corollary}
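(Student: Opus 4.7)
The plan is to deduce the corollary from Theorem~\ref{Thm big ex spex} by verifying its hypotheses under the weaker assumption $H \in \mathrm{EX}(n,\mathcal F)$ (or $\mathrm{EX}_c(n,\mathcal F)$), and then translating its conclusion into a containment in $\mathrm{EX}(n,\mathcal F)$. Three hypotheses need to be checked: that $H \in \mathrm{EX}^{K_{k,n-k}}(n,\mathcal F)$, that $\mathrm{ex}(n,\mathcal F) = O(n)$, and that $K_{k+1,\infty}$ is not $\mathcal F$-free.

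For the first, each $H$ in cases (a)-(d) and (f) contains $K_{k,n-k}$ as a subgraph (trivially in (a), and via the crossing edges of the join $K_k + X$ in the remaining cases), so any edge-extremal $\mathcal F$-free graph containing $K_{k,n-k}$ is automatically in $\mathrm{EX}^{K_{k,n-k}}(n,\mathcal F)$. For the second, $e(H) = \binom{k}{2} + k(n-k) + e(X) = O(n)$ since $e(X) \le Qn + O(1)$ with $Q$ bounded (and $Q < 1$ in (f)). In the connected variant I would use a standard component-decomposition: after extending $\mathrm{ex}_c(m,\mathcal F) \le Cm$ from large $m$ to all $m$ via the trivial $\binom{m}{2}$ bound for small $m$, summing over components of any $\mathcal F$-free graph yields $\mathrm{ex}(n,\mathcal F) = O(n)$. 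For the third, if $K_{k+1,\infty}$ were $\mathcal F$-free then $K_{k+1,n-k-1}$ would be $\mathcal F$-free with $(k+1)n + O(1)$ edges, exceeding $e(H) = (k+Q)n + O(1)$ when $Q < 1$, contradicting edge-extremality of $H$ for large $n$.

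With all hypotheses verified, I would invoke Theorem~\ref{Thm big ex spex}. In cases (a)-(c) it concludes $\mathrm{SPEX}(n,\mathcal F) = H$, and since $H \in \mathrm{EX}(n,\mathcal F)$ by assumption, the containment is immediate. In cases (d) and (f) it gives $\mathrm{SPEX}(n,\mathcal F) \subseteq \mathrm{EX}^{K_k + \overline{K_{n-k}}}(n,\mathcal F)$; I would close the argument by observing that $H = K_k + X$ contains $K_k + \overline{K_{n-k}}$ as a spanning subgraph, so $H$ itself lies in $\mathrm{EX}^{K_k + \overline{K_{n-k}}}(n,\mathcal F)$, whence the maximum edge count of this set equals $\mathrm{ex}(n,\mathcal F)$. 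Every graph in it is therefore $\mathcal F$-free with $\mathrm{ex}(n,\mathcal F)$ edges, hence lies in $\mathrm{EX}(n,\mathcal F)$. The connected analogue is identical upon noting that $K_k + \overline{K_{n-k}}$ and $K_{k+1,n-k-1}$ are both connected. The only real obstacle is the bookkeeping across cases; since Theorem~\ref{Thm big ex spex} is used as a black box, no genuinely new graph-theoretic input is required.
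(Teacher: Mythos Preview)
Your proposal is correct and follows essentially the same approach as the paper, which dispatches the corollary in the paragraph immediately preceding its statement by the same three verifications (that $H\supseteq K_{k,n-k}$ forces $H\in\mathrm{EX}^{K_{k,n-k}}(n,\mathcal F)$, that $K_{k+1,\infty}$ is not $\mathcal F$-free when $Q<1$, and that $\mathrm{ex}_c(n,\mathcal F)=O(n)$ implies $\mathrm{ex}(n,\mathcal F)=O(n)$). You supply more detail than the paper does, in particular the explicit translation step from $\mathrm{EX}^{K_k+\overline{K_{n-k}}}(n,\mathcal F)$ back to $\mathrm{EX}(n,\mathcal F)$ via the edge-count equality, which the paper leaves implicit.
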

We list some applications of Theorem \ref{Thm big ex spex} and Corollary \ref{ex spex corollary}. All of our results only apply when $n$ is large enough, and this condition is assumed everywhere in the list below. The literature for specific families $\mathcal F$ contains many results with a specific lower bound on the `large enough' $n$, and thus our results do not always imply the quantitative part of the results that we mention.

\begin{itemize}
    \item \textit{Paths.} Balister, Gy\H{o}ri, Lehel, and Schelp \cite{BGLS} showed that $\mathrm{EX_c}(n,P_{2\ell+2})=K_\ell+\overline{K_{n-\ell}}$ and $\mathrm{EX_c}(n,P_{2\ell+3})=K_\ell+(K_2\cup\overline{K_{n-\ell-2}})$ for any $\ell\ge 1$. Thus, Corollary \ref{ex spex corollary} and Theorem \ref{Thm big ex spex} (b) and (c) give $\mathrm{SPEX}(n,P_{2\ell+2})=K_\ell+\overline{K_{n-\ell}}$ and $\mathrm{SPEX}(n,P_{2\ell+3})=K_\ell+(K_2\cup\overline{K_{n-\ell-2}})$. This result was shown by Nikiforov \cite{N}.
    \item \textit{Matchings.} \sloppy Erd\H{o}s and Gallai \cite{EG} showed that $\mathrm{EX}(n,M_{2k+2})=K_k+\overline{K_{n-k}}$. Thus, $\mathrm{SPEX}(n,M_{2k+2})=K_k+\overline{K_{n-k}}$. This result was shown by Feng, Yu, and Zhang \cite{FYZ} (and also proved in a different way by Csikv\'ari \cite{csikvari}).
    \item \textit{Copies of $P_3$.} Bushaw and Kettle \cite{BK}  showed that $\mathrm{EX}(n,k\cdot P_3)=K_{k-1}+M_{n-k+1}$. Thus, Theorem \ref{Thm big ex spex} (d) gives $\mathrm{SPEX}(n,k\cdot P_3)=K_{k-1}+M_{n-k+1}$ This was shown by Chen, Liu, and Zhang \cite{CLZ}.
    \item \textit{Linear forests.} Lidicky, Liu, and Palmer \cite{LLP} generalized the result of \cite{EG}: if $F=\bigcup_{i=1}^j P_{v_i}$ for $v_i\ge 2$, $j\ge 2$, and at least one $v_i\ne 3$, then $\mathrm{EX}(n,F)=K_k+\overline{K_{n-k}}$ if some $v_i$ is even and $\mathrm{EX}(n,F)=K_k+(K_2\cup\overline{K_{n-k-2}})$ if all $v_i$ are odd, where $k=\left(\sum_{i=1}^j\lfloor v_i/2\rfloor\right)-1$. Thus, $\mathrm{SPEX}(n,F)=K_k+\overline{K_{n-k}}$ if some $v_i$ is even and $\mathrm{SPEX}(n,F)=K_k+(K_2\cup\overline{K_{n-k-2}})$ if all $v_i$ are odd.  This result was shown by Chen, Liu, and Zhang \cite{CLZ}.
    \item \textit{Star forests.} Lidicky, Liu, and Palmer \cite{LLP} investigated the extremal number of $F=\bigcup_{i=1}^kK_{1,d_i}$, where $d_1\ge\cdots\ge d_k$ and $k\ge 2$. In this case it is possible that $\mathrm{EX}(n,\mathcal F)\cap \mathrm{SPEX}(n,\mathcal F)=\emptyset.$ Note that $\mathrm{EX}^{K_{k-1,n-k+1}}(n,\mathcal F)=K_{k-1}+X$, where all but at most one vertex of $X$ has degree $d_k-1$ and any other vertex has degree $d_k-2$; $K_{k,\infty}\supseteq F$; and $K_{k-1}+(\infty\cdot K_{1,d_k})$ is not $\mathcal F$-free. Thus Theorem \ref{Thm big ex spex} (f) gives that $\mathrm{SPEX}(n,\mathcal F)\subseteq\mathrm{EX}^{K_{k-1}+\overline{K_{n-k+1}}}(n,\mathcal F)$. Then one can show using equitable partitions that if $n-k+1$ is even, we actually have $\mathrm{SPEX}(n,\mathcal F)=\{K_{k-1}+X:X\text{ is }(d_k-1)\text{-regular}\}$. This was shown by Chen, Liu, and Zhang \cite{CLZ2}.
    \item \textit{Trees.} At the same time that we were finishing this paper Fang, Lin, Shu, and Zhang \cite{FLSZ} proved several theorems studying trees. These include the results listed below about ``certain small trees". Our result below about ``almost all trees" is complementary to the results in \cite{FLSZ}; their results determine for which trees $T$ $\mathrm{SPEX}(n, T) = K_{k,n-k}$ but the characterization is phrased in terms of coverings and the extremal number of a decomposition-like family.
    \begin{itemize}
        \item \textit{Certain small trees.} Caro, Patk\'os and Tuza \cite{CPT} determined $\mathrm{EX}_c(n,T)$ for many small trees $T$. Let $S_{a_1,\ldots,a_j}$ denote the \textit{spider} obtained by identifying paths of orders $a_1+1,\ldots,a_j+1$ at one endpoint, and let $D_{a,b}$ denote the \textit{double star} on $a+b+2$ vertices whose two non-leaf vertices have degrees $a+1$ and $b+1$. Let $D_{2,2}^*$ be the graph obtained by attaching a leaf to a leaf of $D_{2,2}$. Among their results there are some to which we can apply Theorem \ref{Thm big ex spex} which are not already mentioned above: (i) $\mathrm{EX}_c(n,S_{2,2,1})=K_2+\overline{K_{n-2}}$; (ii) $\mathrm{EX}_c(n,S_{3,1,1})=K_1+M_{n-1}$; (iii) $\mathrm{EX}_c(n,D_{2,2})=K_{2,n-2}$; (iv) $\mathrm{EX}_c(n,D_{2,2}^*)=K_2+\overline{K_{n-2}}$; (v) $\mathrm{EX}_c(n,S_{3,2,1})=K_2+\overline{K_{n-2}}$; (vi) $\mathrm{EX}_c(n,D_{2,3})=K_{2,n-2}$.
    \item \textit{Almost all trees.} Let $T$ be a tree with proper coloring $V=A\sqcup B$, where $k+1=|A|\le |B|$, such that $B$ contains a vertex of degree at least 3, exactly one of whose neighbors is not a leaf. (In the proof of Proposition \ref{counting trees} we will show that `almost all trees' indeed have this property.) It is well-known that $\mathrm{ex}(n,T)=O(n)$. Observe that $K_{k,\infty}$ is $T$-free. To show it is $T$-saturated, it remains to show that adding any edge to $K_{k,\infty}$ creates a copy of $T$. Let $L,R$ be the partite sets with $|L|=k$. Since $A$ has a leaf, $e(R)=0$, or else we could embed $T$ in $G$ by placing this leaf incident to an edge in $R$ and placing the rest of $A$ in $L$. To show $e(L)=0$, suppose $L$ has an edge. Then we can embed $T$ in $G$ by placing at least 2 leaves of $A$ in $R$ and the rest of $A$ in $L$, a contradiction. Thus, Theorem \ref{Thm big ex spex} (a) gives $\mathrm{SPEX}(n,T)=K_{k,n-k}$.
      \end{itemize}
    \item \textit{Spectral Erd\H os-S\'os theorem.} Note that $G$ contains all trees on $2k+2$ vertices if and only if $G$ is not $\mathcal F$-free, where $\mathcal F$ is the set of all graphs which contain all trees on $2k+2$ vertices. Note that $\mathrm{EX}^{K_{k,n-k}}(n,\mathcal F)=K_k+\overline{K_{n-k}}$ (see e.g. Lemma 4.6 of \cite{cioabua2022erdossos}). Thus, we have $\mathrm{SPEX}(n,\mathcal F)=K_k+\overline{K_{n-k}}$. Similarly, if $\mathcal F'$ is the set of all graphs which contain all trees on $2k+3$ vertices, then $\mathrm{SPEX}(n,\mathcal F')=K_k+(K_2\cup\overline{K_{n-k-2}})$. These results were shown by Cioab\u a and the second and third authors \cite{cioabua2022erdossos}.
    \item \textit{Long cycles.} Let $\mathcal F=\{C_\ell,C_{\ell+1},\ldots\}$ be the set of cycles of length at least $\ell\ge 5$. Note $\mathrm{EX}^{K_{k,n-k}}(n,\mathcal F)=K_k+\overline{K_{n-k}}$ if $\ell$ is odd and $\mathrm{EX}^{K_{k,n-k}}(n,\mathcal F)=K_k+(K_2\cup\overline{K_{n-k-2}})$ if $\ell$ is even, where $k=\lfloor(\ell-1)/2\rfloor$; this can be viewed as a special case of a result of Kopylov \cite{Kopylov1976}. Erd\H{o}s and Gallai \cite{EG} showed that $\mathrm{ex}(n,\mathcal F)\le\frac{1}{2}(k-1)(n-1)$. Thus, $\mathrm{SPEX}(n,\mathcal F)=K_k+\overline{K_{n-k}}$ if $\ell$ is odd and $\mathrm{SPEX}(n,\mathcal F)=K_k+(K_2\cup\overline{K_{n-k-2}})$ if $\ell$ is even. This result was shown by Gao and Hou \cite{GH}. In fact, the result also follows from the spectral even cycle theorem \cite{cioabua2022evencycle}, but we have included it to demonstrate the versatility of Theorem \ref{Thm big ex spex}.
    \item \textit{Arithmetic progression of cycles.} Let $\mathcal F$ be the set of cycles of length $\ell$ modulo $r$, $\ell<r$. Bollob\'as \cite{B} showed that $\mathrm{ex}(n,\mathcal F)<r^{-1}((r+1)^r-1)n$. Thus, if $\ell$ is even and at least 5, then $\mathrm{SPEX}(n,\mathcal F)=K_{\ell/2-1}+(K_2\cup\overline{K_{n-\ell/2-1}})$ and if $\ell,r$ are both odd then $\mathrm{SPEX}(n,\mathcal F)=K_{(r+\ell)/2-1,n-(r+\ell)/2+1}$. The first result follows from the spectral even cycle theorem, while the second one appears to be new.
    \item \textit{Interval of even cycles.} Note that $G$ contains $k$ consecutive even cycle lengths if and only if $G$ is not $\mathcal F$-free, where $\mathcal F$ is the set of all graphs which contain $k$ consecutive even cycle lengths. Verstra\"ete \cite{V} proved that $\mathrm{ex}(n,\mathcal F)<3kn$. Thus, $\mathrm{SPEX}(n,\mathcal F)=K_k+(K_2\cup\overline{K_{n-k-2}})$. This result appears to be new.
    \item \textit{Disjoint cycles.} Let $\mathcal F$ be the set of all disjoint unions of $k$ cycles (of possibly different lengths), where $k\ge 2$. Erd\H{o}s and P\'osa \cite{EP} showed that $\mathrm{EX}(n,\mathcal F)=K_{2k-1}+\overline{K_{n-2k+1}}$. Thus, $\mathrm{SPEX}(n,\mathcal F)=K_{2k-1}+\overline{K_{n-2k+1}}.$ This result was shown by Liu and Zhai \cite{LZ}.
    \item \textit{Disjoint long cycles.} Let $\mathcal F$ be the set of all disjoint unions of $k$ cycles, each of length at least $\ell\ge 5$. Harvey and Wood \cite{HW} showed that $\mathrm{ex}(n,\mathcal F)<\frac{2}{3}k\ell n$. Thus, if $\ell$ is even then $\mathrm{SPEX}(n,\mathcal F)=K_{k\ell/2-1}+(K_2\cup\overline{K_{n-k\ell/2-1}})$ and if $\ell$ is odd then $\mathrm{SPEX}(n,\mathcal F)=K_{k(\ell+1)/2-1}+\overline{K_{n-k(\ell+1)/2+1}}$. The first result follows from the spectral extremal result for $t\cdot C_\ell$ \cite{FZL}, while the second result appears to be new.
    \item \textit{Disjoint equicardinal cycles.} Let $\mathcal F$ be the set of all disjoint unions of two cycles of the same length. H\"aggkvist \cite{H} showed the $\mathrm{ex}(n,\mathcal F)<6n$. Thus, $\mathrm{SPEX}(n,\mathcal F)=K_3+\overline{K_{n-3}}$. This result appears to be new.
    \item \textit{Minors.} Note that $G$ is $F$-minor free if and only if $G$ is $\mathcal F$-free, where $\mathcal F$ is the set of all graphs which have an $F$-minor. Suppose that $K_k+\overline{K_{n-k}}$ is $\mathcal F$-saturated; then $\mathrm{EX}^{K_{k,n-k}}(n,\mathcal F)=K_k+\overline{K_{n-k}}$, because $\overline{K_k}+(K_2\cup\overline{K_{\infty}})$ has a $K_k+(K_2\cup\overline{K_{\infty}})$-minor. Mader \cite{M2} showed that $\mathrm{ex}(n,\mathcal F)<(k-1)2^{{k-1\choose 2}-1}n$. Thus $\mathrm{SPEX}(n,\mathcal F)=K_k+\overline{K_{n-k}}$. We list below some special cases.
    \begin{itemize}
        \item The $K_k$-minor free spectral extremal graph is $K_{k-2}+\overline{K_{n-k+2}}$. This was shown by the third author \cite{T}.
        \item If $F$ is obtained by deleting from $K_k$ the edges of disjoint paths, not all of order 3, then the $F$-minor free spectral extremal graph is $K_{k-3}+\overline{K_{n-k+3}}$. This was shown recently by Chen, Liu, and Zhang \cite{CLZ5}.
        \item If $F_k$ is the friendship graph with $k$ triangles then the $F_k$-minor free spectral extremal graph is $K_k+\overline{K_{n-k}}$. This was shown by He, Li, and Feng \cite{HLF}.
    \end{itemize}
    If $\mathrm{SPEX}(n,F)=K_k+\overline{K_{n-k}}$ then $\mathrm{SPEX}(n,\mathcal F)=K_k+\overline{K_{n-k}}$ holds automatically, without making use of Theorem \ref{Thm big ex spex}. The same applies to $\mathrm{SPEX}_\alpha(n,F)$.
    \item \textit{Topological subdivisions.} The extremal result of \cite{M2} is actually stronger than as stated in the application above; it applies even when $\mathcal F$ is the family of all topological subdivisions of $K_k$. A similar argument to the minor case then proves the following: if $\mathcal F$ is the family of all topological subdivisions of $F$, and $K_k+\overline{K_{n-k}}$ is $\mathcal F$-saturated then $\mathrm{SPEX}(n,\mathcal F)=K_k+\overline{K_{n-k}}$.
    \item \textit{Chorded cycles.} Let $\mathcal F$ be the family of all chorded cycles. P\'osa (see e.g. in \cite{G}) showed that $\mathrm{ex}(n,\mathcal F)<2n-3$. Thus $\mathrm{SPEX}(n,\mathcal F)=K_{2,n-2}$. This was shown recently by Zheng, Huang, and Wang \cite{ZHW} and gives an answer to a question posed by Gould (Question 3 in \cite{G}).
    \item \textit{Two disjoint chorded cycles.} Let $\mathcal F_1$ be the family of all graphs made of two disjoint cycles, one of which has a chord, and let $\mathcal F_2$ be the family of all graphs made of two disjoint chorded cycles. Bialostocki, Finkel, and Gy{\'a}rf{\'a}s \cite{BFG} showed that $\mathrm{EX}(n,\mathcal F_1)\ni K_{4,n-4}$ and $\mathrm{EX}(n,\mathcal F_2)\ni K_{5,n-5}$. Thus $\mathrm{SPEX}(n,\mathcal F_1)=K_{4,n-4}$ and $\mathrm{SPEX}(n,\mathcal F_2)=K_{5,n-5}$. This result appears to be new.
    \item \textit{Disjoint chorded cycles.} Let $\mathcal F$ be the family of all graphs made of $k$ disjoint chorded cycles. The result of Gao and Wang \cite{GW} implies that $\mathrm{ex}(n,\mathcal F)=O(n)$ via a well-known bound on max cut. Thus, $\mathrm{SPEX}(n,\mathcal F)=K_{3k-1,n-3k+1}$. This result appears to be new.
    \item \textit{Multiply chorded cycles.} Let $\mathcal F$ be the family of all cycles with $k(k-2)+1$ chords, $k\ge 2$. Gould, Horn, and Magnant \cite{GPM} showed that $\mathrm{ex}(n,\mathcal F)=O(n)$. Thus, $\mathrm{SPEX}(n,\mathcal F)=K_{k,n-k}$. This result appears to be new.
    \item \textit{Cycles with $k$ incident chords.} Let $\mathcal F$ be the family of cycles with $k$ chords all incident to the same vertex. Jiang \cite{J} showed that $\mathrm{EX}(n,\mathcal F)\ni K_{k+1,n-k-1}$. Thus, $\mathrm{SPEX}(n,\mathcal F)=K_{k+1,n-k-1}$. This result appears to be new.
\end{itemize}

\begin{proposition} \label{counting trees} The proportion of all labelled $m$-vertex trees $T$ such that for some $k$, $\mathrm{SPEX}(n,T)=K_{k,n-k}$ for $n$ large enough, approaches $1$ as $m\to\infty$.
\end{proposition}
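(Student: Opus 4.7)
The plan is to invoke the ``almost all trees'' bullet point above: if a tree $T$ admits a proper $2$-coloring $A\sqcup B$ with $k+1=|A|\le|B|$ such that $B$ contains a vertex of degree $\ge 3$ exactly one of whose neighbors is not a leaf, then $\mathrm{SPEX}(n,T)=K_{k,n-k}$ for $n$ large. Call such a vertex a \emph{broom vertex}. Since every tree has a unique proper $2$-coloring $X\sqcup Y$ up to swapping classes, setting $|X|\le|Y|$ it suffices to show that, for a uniformly random labelled tree on $[m]$, the larger class $Y$ contains a broom vertex with probability tending to $1$ as $m\to\infty$. In fact I will prove the stronger statement that \emph{both} color classes contain $\Theta(m)$ broom vertices with probability $1-o(1)$.

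The main tool is the Pr\"ufer correspondence between labelled trees on $[m]$ and sequences in $[m]^{m-2}$: the degree of a vertex $v$ is $1$ plus its multiplicity in the sequence, and $v$ is a leaf iff it does not appear. By counting Pr\"ufer sequences that realize the local configuration ``$v$ has degree exactly $3$ with two specified leaf-neighbors $\ell_1,\ell_2$ and a specified non-leaf third neighbor $w$'' and summing over all ordered choices of $(v,\ell_1,\ell_2,w)$, one finds that the expected number of broom vertices in a uniform labelled tree is $\Theta(m)$. A second-moment computation then gives concentration, because the broom event at $v$ prescribes only $O(1)$ entries of the Pr\"ufer sequence and is nearly independent of the broom event at a distant $v'$; so Chebyshev's inequality yields that the total broom count is $\Theta(m)$ with probability $1-o(1)$.

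To localize the count to a single color class, I would combine this with the classical fact that each color class of a uniform random labelled tree has size $m/2+O(\sqrt{m})$ with high probability (itself an easy Pr\"ufer argument via distance parities from a fixed root). A Chebyshev estimate applied to the count of broom vertices lying in $Y$, conditioned on an approximately balanced bipartition, then gives $\Omega(m)$ broom vertices in $Y$ w.h.p., which together with the ``almost all trees'' criterion completes the proof. The main obstacle I anticipate is precisely this last step: the broom event at a single vertex is local in the Pr\"ufer sequence, but whether the vertex lies in $Y$ depends on parities of graph distances and is a global feature of the tree, so the joint event ``$v$ is a broom and $v\in Y$'' does not decouple as cleanly as the plain broom event does. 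The cleanest way around this is to first condition on the global bipartition structure (which is concentrated around balanced), reducing the problem to an ordinary second-moment argument on each color class separately.
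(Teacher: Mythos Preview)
Your overall strategy---reduce to the ``almost all trees'' criterion and run a second-moment argument---is exactly what the paper does, but your plan for the key step (placing a broom vertex in the larger color class) has a genuine gap, and the paper handles this differently and more cleanly.

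You correctly identify that ``$v$ is a broom vertex'' is local in the Pr\"ufer sequence while ``$v$ lies in the larger class $Y$'' is global, and you propose to condition on the bipartition before running a second-moment argument on each class. But conditioning a uniform labelled tree on its bipartition (or even just on the class sizes) does not yield a distribution that the Pr\"ufer correspondence describes in any convenient way, so the ``ordinary second-moment argument on each color class separately'' you anticipate is not straightforward to set up; in particular, computing the conditional first and second moments of the broom count in $Y$ is not a routine calculation. This is not fatal, but it is precisely the hard part of the argument and your write-up treats it as a formality.

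The paper sidesteps the color-class issue entirely with a symmetry trick. Rather than counting broom vertices, it counts edges $ij$ such that \emph{both} $i$ and $j$ have a neighbor (other than each other) of degree exactly $3$ whose other two neighbors are leaves. Since $i\sim j$, these two broom neighbors lie in opposite color classes, so one of them is automatically in the larger class $B$ and the criterion is satisfied---no need to know which class is larger. The event $A_{ij}$ is still purely local (it pins down eight vertices and the edges among them), so a direct first/second-moment computation using exact counts of labelled trees containing a prescribed edge or pair of edges, together with Chebyshev's inequality, gives $\mathbb P[\text{no such }ij]\to 0$. No conditioning on the bipartition is needed at any stage.
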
 

Our second goal is to relate the spectral extremal graphs and alpha spectral extremal graphs.

\begin{theorem}\label{thm spex to alpha} Let $\mathcal F$ be a family of graphs containing some bipartite graph $F$ for which $F-v$ is a forest, or such that $\mathrm{ex}(n,\mathcal F)=O(n)$. Suppose that for $n$ large enough, $\mathrm{SPEX}(n,\mathcal F)\ni H$, where either:
\begin{itemize}
    \item[(a)] $H=K_k+\overline{K_{n-k}}$
    \item[(b)] $H=K_k+(K_2\cup\overline{K_{n-k-2}})$.
\end{itemize}
Then for any $\alpha\in(0,1)$, for any $n$ large enough, $\mathrm{SPEX}_\alpha(n,\mathcal F)=H$.
\end{theorem}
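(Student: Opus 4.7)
Let $G^*\in\mathrm{SPEX}_\alpha(n,\mathcal F)$ and write $\rho=\lambda_\alpha(G^*)$. The plan is to show $G^*=H$ by first establishing that $G^*$ has essentially the same degree profile as $H$, and then closing the gap via an $A_\alpha$-stability/Kelmans argument.

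First I would establish $\lambda(G^*)=O(\sqrt n)$. If $\mathrm{ex}(n,\mathcal F)=O(n)$, this is immediate from $e(G^*)\le\mathrm{ex}(n,\mathcal F)$ and $\lambda(G^*)\le\sqrt{2e(G^*)}$. Otherwise $\mathcal F$ contains a bipartite graph $F$ with $F-v$ a forest, and then $\mathrm{SPEX}(n,\mathcal F)\ni H$ gives $\lambda(G^*)\le\lambda(H)=\Theta(\sqrt n)$ by the equitable-partition computation for $H$. A parallel quotient-matrix computation gives $\lambda_\alpha(H)=\alpha n+c_{\alpha,k}+o(1)$. Combining $\rho\ge\lambda_\alpha(H)$ with Weyl's inequality $\rho\le\alpha\Delta(G^*)+(1-\alpha)\lambda(G^*)$ then forces $\Delta(G^*)\ge n-O_\alpha(\sqrt n)$.

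Next I would normalize the Perron eigenvector $\mathbf x$ of $A_\alpha(G^*)$ so that $\max_v x_v=1$. The eigenequation $(\rho-\alpha d(v))x_v=(1-\alpha)\sum_{w\sim v}x_w$ at a maximizing vertex gives $d(v)\ge\rho$, so the maximum-entry vertex $v_1$ satisfies $d(v_1)\ge\alpha n+O_\alpha(1)$; combined with the $\lambda$-bound applied to $G^*-v_1$, this upgrades to $d(v_1)\ge n-O_\alpha(\sqrt n)$. Iterating on $G^*-\{v_1,\ldots,v_j\}$ (which remains $\mathcal F$-free with the same asymptotic spectral bounds) produces $v_2,\ldots,v_k$, each with $x_{v_i}=1-o(1)$ and degree $n-O_\alpha(\sqrt n)$; the Perron entries on the complementary set must then be $O_\alpha(n^{-1/2})$, since a $(k+1)$-st near-universal vertex would embed a member of $\mathcal F$ (either because the bipartite $F$ with $F-v$ a forest embeds greedily into $K_{k+1,\infty}$, or because the resulting near-complete-bipartite substructure would violate the linear extremal number).

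With $S=\{v_1,\ldots,v_k\}$ identified, an $A_\alpha$-version of the Kelmans transformation, which monotonically increases $\lambda_\alpha$ (as can be verified directly from the eigenvector inequality), applied to pairs of vertices in $V\setminus S$ with disparate Perron entries forces $G^*\supseteq K_{k,n-k}$ with $S$ as one partite class, and then the same inequality forces $S$ to be a clique. Since $\mathrm{SPEX}(n,\mathcal F)\ni H$ already pins down the only $\mathcal F$-free extension of $K_{k,n-k}$ that can arise this way, and $\lambda_\alpha(K_k+Y)$ is strictly increasing in each added edge of $Y$ that preserves $\mathcal F$-freeness, we conclude $G^*=H$. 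The main obstacle will be the iterative peeling step: the $A_\alpha$ eigenequation does not split as cleanly as its adjacency counterpart because of the diagonal term $\alpha D$, so the induction on $G^*-\{v_1,\ldots,v_j\}$ requires careful joint tracking of $\lambda_\alpha$, $\lambda$, and the Perron entries as successive near-universal vertices are removed.
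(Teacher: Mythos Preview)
Your outline has two genuine gaps, and it also misses the structural input that does the heavy lifting in the paper.

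The iterative peeling step does not go through as written. From Weyl and $\lambda(G^*)=O(\sqrt n)$ you correctly extract one vertex $v_1$ with $d(v_1)\ge n-O(\sqrt n)$, but to iterate you would need $\lambda_\alpha(G^*-\{v_1,\ldots,v_j\})\ge \alpha n+O(1)$, and nothing you have established gives that: $G^*-\{v_1,\ldots,v_j\}$ is $\mathcal F$-free but is not known to be $\alpha$-spectral-extremal, and a Rayleigh-quotient argument with the restricted Perron vector loses too much once you remove a vertex of entry $1$ and near-full degree. The paper sidesteps iteration entirely: it fixes a small constant $\sigma$, sets $L=\{v:x_v>\sigma\}$, and uses the \emph{second}-order eigen-equation $\lambda_\alpha^2 x_v=\alpha d(v)\lambda_\alpha x_v+\alpha(1-\alpha)\sum_{u\sim v}d(u)x_u+(1-\alpha)^2\sum_{w\sim v}\sum_{u\sim w}x_u$ together with a key local edge bound $2e(N_1(v))+e(N_1(v),N_2(v))\le 3cn$ to show directly that every $v\in L$ has $d(v)\ge(1-\tfrac{1}{(k+1)\sqrt n})n$ and then that $|L|=k$. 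That local bound is exactly where the ``$F-v$ is a forest'' hypothesis is used (via an auxiliary forest $T$ built from two copies of $F-v$); your outline never invokes it, so in that branch of the hypothesis you have no control on edge counts at all.

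Two further points. First, your reason that a $(k{+}1)$-st near-universal vertex is forbidden is not right: there is no assumption that the specific bipartite $F$ with $F-v$ a forest embeds in $K_{k+1,\infty}$, and in the forest branch there is no linear extremal bound. The correct argument (used in the paper) is that if $K_{k+1,\infty}$ were $\mathcal F$-free then $\lambda(K_{k+1,n-k-1})>\lambda(H)$ would contradict $\mathrm{SPEX}(n,\mathcal F)\ni H$. Second, the Kelmans step is not a valid substitute for the endgame: a Kelmans move produces a \emph{different} graph with $\lambda_\alpha$ no smaller, but that graph need not be $\mathcal F$-free, so strict increase does not contradict extremality and tells you nothing about $G^*$. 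The paper instead, once $|L|=k$ and $x_v=O(n^{-1})$ for $v\notin L$ are in hand, performs explicit edge swaps (deleting edges outside $L$ and adding missing edges to $L$) that land in a subgraph of $H$ and compares $x^TA_\alpha x$ directly; this simultaneously proves $E=\emptyset$, that $L$ is a clique, and hence $G^*=H$.
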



We give a list of new and existing results implied by Theorem \ref{thm spex to alpha}; if no reference is given for the required spectral result that means that it falls under the previous list of applications.
\begin{itemize}
    \item \textit{Paths.} We have $\mathrm{SPEX}_\alpha(n,P_{2\ell+2})=K_\ell+\overline{K_{n-\ell}}$ and $\mathrm{SPEX}_\alpha(n,P_{2\ell+3})=K_\ell+(K_2\cup\overline{K_{n-\ell-2}})$. This was shown by Chen, Liu, and Zhang \cite{CLZ4}.
    \item \textit{Matchings.} We have $\mathrm{SPEX}_\alpha(n,M_{2k+2})=K_k+\overline{K_{n-k}}$. This was shown by Yuan and Shao \cite{YS}.
    \item \textit{Linear forests.} Generalizing the previous two results: if $F=\bigcup_{i=1}^jP_{v_i}$ where $v_i\ge 2$, $j\ge 2$, and at least one $v_i\ne 3$, then $\mathrm{SPEX}_\alpha(n,\mathcal F)=K_k+\overline{K_{n-k}}$ if some $v_i$ is even and $\mathrm{SPEX}_\alpha(n,\mathcal F)=K_k+(K_2\cup\overline{K_{n-k-2}})$ if all $v_i$ are odd, where $k=\left(\sum_{i=1}^j\lfloor v_i/2\rfloor\right)-1$. This was shown by Chen, Liu, and Zhang \cite{CLZ4}. 
    \item 
    \textit{Certain small trees.} We have (i) $\mathrm{SPEX}_\alpha(n,S_{2,2,1})=K_k+\overline{K_{n-2}}$; (ii) $\mathrm{SPEX}_\alpha(n,D_{2,2}^*)=K_2+\overline{K_{n-2}}$; (iii) $\mathrm{SPEX}_\alpha(n,S_{3,2,1})=K_2+\overline{K_{n-2}}$. These results appear to be new.
    \item \textit{Spectral Erd\H{o}s-S\'os theorem.} Let $\mathcal F$ be the set of all graphs which contain all trees on $2k+2$ vertices, and let $\mathcal F'$ be the set of all graphs which contain all trees on $2k+3$ vertices. Then $\mathrm{SPEX}_\alpha(n,\mathcal F)=K_k+\overline{K_{n-k}}$ and $\mathrm{SPEX}_\alpha(n,\mathcal F')=K_k+(K_2\cup\overline{K_{n-k-2}})$. These results were shown by Chen, Li, Li, Yu, and Zhang \cite{CLLYZ}.
    \item \textit{Even cycles and consecutive cycles.} Cioab\u a and the second and third authors \cite{cioabua2022evencycle} showed that $\mathrm{SPEX}(n,C_{2\ell+2})=K_\ell+(K_2\cup\overline{K_{n-\ell-2}})$ and $\mathrm{SPEX}(n,\{C_{2\ell+1},C_{2\ell+2}\})=K_\ell+\overline{K_{n-\ell}}$, for $k\ge 2$. Thus, $\mathrm{SPEX}_\alpha(n,C_{2\ell+2})=K_k+(K_2\cup\overline{K_{n-\ell-2}})$ and $\mathrm{SPEX}_\alpha(n,\{C_{2\ell+1},C_{2\ell+2}\})=K_\ell+\overline{K_{n-\ell}}$. This was shown by Li and Yu \cite{LY}.
    \item \textit{Intersecting even cycles.} Let $C_{\ell_1,\ldots,\ell_t}$ be obtained by intersecting the cycles $C_{\ell_1}\ldots,C_{\ell_t}$ at a unique vertex. The second author \cite{D} showed that if $\ell_1,\ldots,\ell_t\ge 2$ and some $\ell_t>2$, then $\mathrm{SPEX}(n,C_{2\ell_1,\ldots,2\ell_t})=K_k+(K_2\cup\overline{K_{n-k-2}})$, where $k=\sum_{i=1}^t(\ell_i-1)$. Thus, $\mathrm{SPEX}_\alpha(n,C_{2\ell_1,\ldots,2\ell_t})=K_k+(K_2\cup\overline{K_{n-k-2}})$. This result appears to be new.
    \item \textit{Long cycles.} If $\mathcal F=\{C_\ell,C_{\ell+1},\ldots\}$ then $\mathrm{SPEX}_\alpha(n,\mathcal F)=K_k+\overline{K_{n-k}}$ if $\ell$ is odd and $\mathrm{SPEX}_\alpha(n,\mathcal F)=K_k+(K_2\cup\overline{K_{n-k-2}})$ if $\ell$ is even, where $k=\lfloor(\ell-1)/2\rfloor$. This result is implied by the result forbidding even cycles or consecutive cycles.
    \item \textit{Arithmetic progression of cycles.} Let $\mathcal F$ be the set of cycles of length $\ell$ modulo $r$, $\ell<r$. If $\ell$ is even and at least 5, then $\mathrm{SPEX}_\alpha(n,\mathcal F)=K_{\ell/2-1}+(K_2\cup\overline{K_{n-\ell/2-1}})$. This result is implied by the result forbidding even cycles.
    \item \textit{Interval of even cycles.} Let $\mathcal F$ be the family of all graphs which contain $k$ consecutive even cycle lengths. Then $\mathrm{SPEX}_\alpha(n,\mathcal F)=K_k+(K_2\cup\overline{K_{n-k-2}})$. This result appears to be new.
    \item \textit{Disjoint cycles.} If $\mathcal F$ is the set of all disjoint unions of $k$ cycles (of possibly different lengths) then $\mathrm{SPEX}_\alpha(n,\mathcal F)=K_{2k-1}+\overline{K_{n-2k+1}}$. This was shown by Li, Yu, and Zhang \cite{LYZ}.
    \item \textit{Disjoint long cycles.} Let $\mathcal F$ be the set of all disjoint unions of $k$ cycles, each of length at least $\ell\ge 5$. If $\ell$ is even then $\mathrm{SPEX}_\alpha(n,\mathcal F)=K_{k\ell/2-1}+\overline{K_{n-k\ell/2+1}}$ and if $\ell$ is odd then $\mathrm{SPEX}_\alpha(n,\mathcal F)=K_{k(\ell+1)/2-1}+(K_2\cup\overline{K_{n-k(\ell+1)/2+1}})$. This result seems to be new.
    \item \textit{Disjoint equicardinal cycles.} If $\mathcal F$ is the family of all disjoint unions of two cycles of the same length, then $\mathrm{SPEX}_\alpha(n,\mathcal F)=K_3+\overline{K_{n-3}}$ for $n$ large enough. This result seems to be new.
    \item \textit{Minors.} If $\mathcal F$ is the family of all graphs which have an $F$-minor and $K_k+\overline{K_{n-k}}$ is $\mathcal F$-saturated, then $\mathrm{SPEX}_\alpha(n,\mathcal F)=K_k+\overline{K_{n-k}}$. 
    \begin{itemize}
        \item If $F=K_k$ then $\mathrm{SPEX}_\alpha(n,\mathcal F)=K_{k-2}+\overline{K_{n-k+2}}$. This was shown by Chen, Liu, and Zhang \cite{CLZ3}.
        \item If $F$ is obtained by deleting from $K_k$ the edges of disjoint paths, not all of length 3, then $\mathrm{SPEX}_\alpha(n,\mathcal F)=K_{k-3}+\overline{K_{n-k+3}}$. This result appears to be new.
        \item If $F$ is the friendship graph with $k$ triangles $\mathrm{SPEX}_\alpha(n,\mathcal F)=K_k+\overline{K_{n-k}}$. This was shown by Wang and Zhang \cite{WZ}.
    \end{itemize}
    \item \textit{Topological subdivisions.} If $\mathcal F$ is the family of all topological subdivisions of $F$ and $K_k+\overline{K_{n-k}}$ is $\mathcal F$-saturated, then $\mathrm{SPEX}_\alpha(n,\mathcal F)=K_k+\overline{K_{n-k}}$.
\end{itemize}

We observe that there are fewer options for the assumed extremal graph in Theorem \ref{thm spex to alpha} than in Theorem \ref{Thm big ex spex}. In the case that $H=K_{k,n-k}$ there is a simple counterexample that explains this: let $F$ be any graph such that $\mathrm{SPEX}(n,F)=K_{k,n-k}$ for $n$ large enough, where $k>1$. Then $K_{1,n}$ is also $F$-free, and if $\alpha$ is sufficiently close to 1 then $\lambda_\alpha(K_{1,n})\ge\alpha(n-1)>\lambda_\alpha(K_{k,n-k})$ (see e.g. \cite{nikiforov2017merging}). Thus, one could only hope to extend Theorem \ref{thm spex to alpha} to this case if $\alpha$ is restricted to a certain range. When $H$ is of the form $K_k+X$, then the situation may be more subtle and we do not attempt to address every case.

\section{Spectral extremal results} \label{Section EX SPEX}

 In this section we prove results concerning $\mathrm{SPEX}(n,\mathcal F)$. In subsections \ref{subsection common}, \ref{subsection common 2}, \ref{subsection eigenweights}, and \ref{subsection proof} we prove Theorem \ref{Thm big ex spex}; in subsection \ref{subsection counterexample} we prove Proposition \ref{counterexample}; and in subsection \ref{subsection forbidden} we prove Proposition \ref{counting trees}. Unless stated otherwise, all claims are asserted only for $n$ large enough.

\subsection{Structure of edge extremal graphs} \label{subsection common}

\begin{proposition} \label{matching argument 1} Under the assumptions of Theorem \ref{Thm big ex spex} (d) $X$ can be obtained from a maximal union of $P_1$, of $P_2$, or of $P_3$ by adding or deleting a bounded number of edges.
\end{proposition}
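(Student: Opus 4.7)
The plan is to combine a bound on $\Delta(X)$ with the edge-extremality of $H$ to force $X$ to be close to one of three canonical forms. The first step is to establish that $\Delta(X) \le D$ for some constant $D = D(\mathcal F)$. Using the hypothesis that $K_{k+1,\infty}$ is not $\mathcal F$-free, select $F \in \mathcal F$ with $F \subseteq K_{k+1,m}$ for some $m$, with bipartition $A \sqcup B$ and $|A| = k+1$, and set $d^- = \min_{a \in A} \deg_F(a)$. If some $v \in V(X)$ had $\deg_X(v) \ge d^-$, then $F$ could be embedded in $K_k + X$ by placing $A \setminus \{a^*\}$ in $K_k$ for $a^*$ achieving the minimum, mapping $a^*$ to $v$, using $d^-$ neighbors of $v$ in $X$ for $N_F(a^*)$, and placing the remaining vertices of $B$ freely elsewhere in $X$ (possible since $|V(X)| \ge |B|$ for large $n$). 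This would contradict $H$ being $\mathcal F$-free, so $\Delta(X) < d^-$; since $\mathcal F$ is finite, taking the tightest such bound over all admissible $F$ gives a uniform constant $D$.

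Next, let $M$ be a maximum matching in $X$ of size $t$. By maximality every edge of $X$ is incident to $V(M)$, giving $e(X) \le 2tD$. If $t$ is bounded independently of $n$, then $e(X) = O(1)$ and $X$ is within $O(1)$ edge modifications of $\overline{K_{n-k}}$, the maximal union of $P_1$. Otherwise $t \to \infty$. In this regime, for each $r \ge 1$ let $Y_r$ be the maximal union of $P_r$ on $n - k$ vertices, so $e(Y_r) = (r-1)\lfloor (n-k)/r \rfloor$, and let $r^*$ be the largest $r$ for which $K_k + Y_r$ is $\mathcal F$-free. By edge-extremality of $H$, one has $e(X) \ge e(Y_{r^*})$. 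The limiting densities $(r-1)/r$ take values $0, 1/2, 2/3, 3/4, 4/5, \ldots$, and the hypothesis $e(X) \le Qn + O(1)$ with $Q < 3/4$ then forces $r^* \le 3$, so $e(X) = (r^*-1)(n-k)/r^* + O(1)$ with $r^* \in \{1, 2, 3\}$. A stability argument, using the sharp edge count together with $\Delta(X) \le D$, then shows $X$ is within $O(1)$ edge changes of $Y_{r^*}$.

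The main obstacle will be executing the stability step rigorously. Once the edge count is pinned to one of $\{0,\,(n-k)/2,\,2(n-k)/3\} + O(1)$, one must show that at most $O(1)$ components of $X$ fail to be in $\{K_1, K_2, P_3\}$ and that any such deviation can be repacked into the canonical arrangement while preserving $\mathcal F$-freeness of $K_k + X$. This requires a careful case analysis of the components compatible with $\Delta(X) \le D$, combined with the choice of $r^*$, to rule out pathologies such as long paths, short cycles, or miscounts between canonical component types; each such deviation should either violate edge-extremality after a local swap with a canonical piece, or else exhaust vertices faster than it accumulates edges, bounding its total occurrence.
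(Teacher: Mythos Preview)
Your approach differs from the paper's, and the part you flag as the ``main obstacle'' is indeed where essentially all of the content lies --- and you have not carried it out. The paper avoids the $r^*$/path-comparison detour and argues directly on the components of $X$: using that $\mathrm{ex}^{K_{k,n'-k}}(n',\mathcal F)\le (k+Q)n' + O(1)$ for \emph{all} large $n'$, it shows (i) only boundedly many components of $X$ have order $\ge 4$ (else their union $Y$ has $e(Y)\ge 3|V(Y)|/4$, and $K_k+Y$ on $k+|V(Y)|$ vertices violates the extremal bound since $Q<3/4$), and (ii) every component has bounded order (else a large component has density approaching $1$). This immediately pins down $\nu:=\max\{i:\text{unboundedly many components of order }i\}\in\{1,2,3\}$. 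A swap argument then handles each case: for instance when $\nu=3$, if too many vertices lie in components of order $<3$ one merges them into a new $P_3$; the resulting copy of some $F_0\in\mathcal F$ must use this new $P_3$, but since $X$ already contains unboundedly many disjoint $P_3$'s, one of them is disjoint from $F_0$ and can be substituted, yielding a forbidden subgraph already present in $K_k+X$.

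Your writeup also has a logical gap \emph{before} the stability step: the assertion ``so $e(X)=(r^*-1)(n-k)/r^*+O(1)$'' does not follow from $r^*\le 3$. Extremality gives only the lower bound; the matching upper bound is precisely what the unexecuted stability must deliver. Knowing $K_k+Y_{r^*+1}$ is not $\mathcal F$-free tells you $X$ has bounded $P_{r^*+1}$-packing number, but this alone does not force components into $\{K_1,K_2,P_3\}$: for example $K_{1,3}$ and $K_3$ contain no $P_4$ yet have edge density $\ge 3/4$, and nothing in your argument bounds how many such components appear. To close this you would need exactly the density-versus-extremal-bound argument the paper uses in (i), after which your swap idea becomes the paper's swap argument. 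In short, the $r^*$ framing adds a layer without purchasing anything, and the real work --- bounding high-density components via the extremal function, then swapping using the abundance of canonical pieces --- is what you left as a sketch.
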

\begin{proof}
Note that $K_{k+1,\infty}\supseteq F$ for some $F\in\mathcal F$. Let $\delta$ be the smallest degree in the partite set of $F$ of size $k+1$. Note that $\Delta(X)\le\delta - 1$, because $F\subseteq K_k+(K_{1,\delta}\cup\overline{K_{n-k-\delta-1}})$.

Consider the components of $X$. We claim the number of components of order at least 4 is bounded; otherwise, the union of these components is a subgraph $Y\subseteq X$ with $|V(Y)|\to\infty$ and $e(Y)\ge 3|V(Y)|/4$, thus contradicting the fact that $\mathrm{ex}^{K_{k,n-k}}(n,\mathcal F)\le kn+Qn+O(1)$ for $n$ large enough. Moreover, the order of a component of $X$ is bounded; otherwise, by taking the largest component $Y$, we find a subgraph $Y\subseteq X$ with $|V(Y)|\to\infty$ and $e(Y)\ge|V(Y)|-1$, again contradicting our bound on $\mathrm{ex}^{K_{k,n-k}}(n,\mathcal F)$. Finally, the number of $K_3$ components of $X$ is bounded; otherwise we find an unbounded $Y\subseteq X$ with $e(Y)=|V(Y)|$, again a contradiction.

Now let $\nu=\max\{i:\text{there exist an unbounded number of components of $X$ of order }i\}\in\{1,2,3\}$. If $\nu=1$ then $e(X)$ is bounded, and we are done.

Suppose $\nu=2$. Then $X$ is the union of an unbounded number copies of $P_2$, a bounded number of graphs of bounded order, and isolated vertices. Suppose for contradiction there are 2 isolated vertices in $X$; then adding an edge $e_0$ between them creates a copy $F_0$ of a graph from $\mathcal F$. Since there were already an unbounded number of disjoint edges in $X$, one such edge $e_1$ is disjoint from $F_0$, and the neighborhoods of its endpoints contain the neighborhoods of the endpoints of $e_0$, so we can replace $e_0$ by $e_1$ to find $F_1\simeq F_0$ which already existed in $K_k+X$, a contradiction. Thus there is at most 1 isolated vertex. Hence, $X$ is obtained from $M_{n-k}$ by deleting and adding a bounded number of edges.

Suppose $\nu=3$. Then $X$ is the union of an unbounded number of copies of $P_3$, a bounded number of graphs of bounded order, and graphs of order less than 3. We will show that the components smaller than $P_3$ span at most 5 vertices; for otherwise, we can find either 3 $P_1$ components, a $P_1$ component and a $P_2$ component, or three $P_2$ components. In any case we can add and delete edges for a net increase in $e(X)$ without creating any components on more than 3 vertices, and such that the copy of some $F_0\in\mathcal F$ which must have been created uses a vertex from a newly created $P_3$ component; but similarly to the previous case, we find another $P_3$ component (or two disjoint $P_3$ components if needed) which was not used in $F_0$, hence there was some $F_1\simeq F_0$ which already existed, a contradiction. This shows that $X$ is obtained from a maximal union of $P_3$ by deleting and adding a bounded number of edges.
\end{proof}

\subsection{Common features of the spectral extremal graphs} \label{subsection common 2}

The purpose of this subsection is to prove the following proposition. 

\begin{proposition} \label{common result} Assume the setup of any of the cases (a)-(f) of Theorem \ref{Thm big ex spex}. If $G\in\mathrm{SPEX}(n,\mathcal F)$ then $G\supseteq K_{k,n-k}$.
\end{proposition}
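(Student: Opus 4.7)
The plan is to combine a spectral lower bound derived from $K_{k,n-k}\subseteq H$ with the common-neighborhood constraint arising from the hypothesis that $K_{k+1,\infty}$ is not $\mathcal F$-free, and then extract a dominating $k$-set via the Perron eigenvector.

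First, in each of cases (a)--(f) the graph $H$ contains $K_{k,n-k}$, so $K_{k,n-k}$ is itself $\mathcal F$-free; hence spectral extremality of $G$ gives
\[
\lambda(G)\;\ge\;\lambda(H)\;\ge\;\lambda(K_{k,n-k})\;=\;\sqrt{k(n-k)}.
\]
On the other hand $e(G)\le\mathrm{ex}(n,\mathcal F)=O(n)$, and applying the standard inequality $\lambda(G)^2\le\max_v\sum_{u\sim v}d(u)\le 2e(G)$ pins down $\lambda(G)=\Theta(\sqrt n)$ and $e(G)=\Theta(n)$. I would normalize the Perron eigenvector $\mathbf x$ so that $\|\mathbf x\|_\infty=x_{u^*}=1$; then $d(u^*)\ge\lambda(G)=\Omega(\sqrt n)$. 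Moreover, since $K_{k+1,\infty}\supseteq F$ for some $F\in\mathcal F$, a constant $M=M(\mathcal F)$ bounds the common neighborhood of any $k+1$ vertices of $G$.

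Next, I would identify the candidate dominating set $L=\{v:x_v\ge\eta\}$ for a small constant $\eta>0$. To show $|L|\le k$, I would expand
\[
\lambda^2 \;=\; \sum_{w\in V(G)} x_w\,|N(u^*)\cap N(w)|
\]
and observe that the sum over $w\notin L$ contributes only $O(\eta n)$, while the full right-hand side is $\Omega(n)$; hence the on-$L$ contribution is $\Omega(n)$. If $|L|\ge k+1$, selecting any $k+1$ vertices of $L$ and applying the $M$-bound to them produces a sharp restriction on their shared neighborhoods incompatible with this $\Omega(n)$ contribution. To show $|L|\ge k$, a test-vector comparison (using the Perron vector of $K_{t,n-t}$ for $t<k$) would force $\lambda(G)<\sqrt{k(n-k)}$, contradicting the lower bound.

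Finally, to show $L$ dominates $V\setminus L$, suppose $v\in L$ and $u\in V\setminus L$ with $v\not\sim u$. I would form $G'=G-e+vu$, where $e$ is an edge of tiny Perron-weight product (such an edge exists because the weighted degree sum is dominated by $L$-entries). The Rayleigh quotient of $\mathbf x$ on $G'$ strictly exceeds $\lambda(G)$, so $\lambda(G')>\lambda(G)$, contradicting spectral extremality---\emph{provided} $G'$ is $\mathcal F$-free. Verifying this is the main obstacle: a new copy of $F\in\mathcal F$ in $G'$ must use the inserted edge $vu$ and hence live within a bounded neighborhood of $\{v,u\}$, which is tightly constrained by the $M$-bound together with the case-specific structure of $H$ already exploited in Proposition~\ref{matching argument 1}. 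Once this is verified, $|L|=k$ and $L$ dominates $V\setminus L$, hence $G\supseteq K_{k,n-k}$.
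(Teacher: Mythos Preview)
Your outline has two genuine gaps.

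\textbf{Bounding $|L|$ from above.} From $\lambda^2=\sum_w x_w\,|N(u^*)\cap N(w)|$ you correctly isolate an $\Omega(n)$ contribution from $L$, but the leap ``if $|L|\ge k+1$ then the $M$-bound gives a contradiction'' does not follow. The $M$-bound constrains the \emph{common} neighborhood of $k+1$ chosen vertices, whereas your sum involves only pairwise intersections $|N(u^*)\cap N(w)|$; nothing prevents, say, $|L|\sim\sqrt n$ with each such intersection of size $\sim\sqrt n$. The paper resolves this with an extra layer: it first shows a set $L$ (threshold $\sigma$) has \emph{bounded} size via a degree lower bound for its vertices combined with $e(G)=O(n)$, and only then works with a smaller set $L'\subseteq L$ (threshold $\eta\gg\sigma$) whose vertices are shown to have degree $(1-o(1))n$; at that point $|L'|\ge k+1$ really does force a large $K_{k+1,m}$. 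Your single-threshold shortcut skips the bounded-$|L|$ step, without which the eigenweight--degree bootstrap cannot get started.

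\textbf{The domination step.} The single-edge swap $G'=G-e+vu$ is the wrong tool, and the obstacle you flag is real and not resolved by what you wrote. After adding $vu$, the vertex $v$ has nearly full degree, so a forbidden $F$ using $vu$ is not confined to any bounded neighborhood, and there is no reason $G'$ should be $\mathcal F$-free. The paper's approach is different: it takes an exceptional vertex $v\notin R$ of low degree into $R\cup E$, deletes \emph{all} those edges, and adds \emph{all} missing edges to $L'$. In the modified graph $v$ now has exactly the neighborhood $L'$, so it is a clone of any vertex in $R$; hence if some $F_0\in\mathcal F$ appears using $v$, one can replace $v$ by an unused $u\in R$ to find $F_0$ already in $G$. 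For infinite $\mathcal F$ the paper iterates this and then uses the specific structure of cases (a)--(c) to finish. Your edge-swap does not create a clone, so the replacement argument is unavailable.
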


Let $H$ be the edge extremal graph; when we need to specify the number of vertices, we will write $H_n$ instead of $H$. Set $\lambda=\lambda_1(G)$, and let $x$ be a Perron eigenvector of $G$, scaled so that its greatest entry is $x_z=1$. Let $F\in\mathcal F$ be a graph such that $F\subseteq K_{k+1,\infty}$, and let $m=|V(F)|$,  $m'=\sup\{|V(F')|:F'\in\mathcal F\}$. Let $C$ be chosen such that $\mathrm{ex}(n,\mathcal F)\le Cn$. We choose positive constants $\eta,\varepsilon,\sigma,\beta$ to satisfy the following inequalities:
\begin{multicols}{2}
\begin{itemize}
    \item[(i)] $\beta<\frac{k}{2C}$
    \item[(ii)] $10C\sigma<\varepsilon^2$
    \item[(iii)] $\varepsilon<\eta\le C$
    \item[(iv)] $\sigma<\frac{\varepsilon^2}{30k}$
    \item[(v)] $(8k^3+2)\varepsilon<\eta$
    \item[(vi)] $\varepsilon<(2/5-1/4)/k^3$
    \item[(vii)] $(k+\varepsilon)\eta+\varepsilon^2/2<1/2$
    \item[(viii)] $(2C+m)\eta<1-\frac{1}{4k^3}.$
\end{itemize}
\end{multicols}
To see that such a choice exists, we choose $\eta$ first with the additional constraint that $\eta<k/4$; this allows that some $\varepsilon>0$ will satisfy (vii). Then we choose $\varepsilon,\sigma,$ and $\beta$ in that order. With $V=V(G)$, let $L'=\{v\in V:x_v\ge\eta\}$, $L=\{v\in V:x_v \ge\sigma\}$, $M=\{v\in V:x_v\ge\beta\sigma\}$, and $S=V-L$. For $v\in V$, let $L_i(v)=L\cap N_i(v)$, $M_i(v)=M\cap N_i(v)$, $S_i(v)=S\cap N_i(v)$. 

The proof of Proposition \ref{common result} is technical; a high-level outline is as follows.
\begin{itemize}
    \item[1.] Show that $|L|$ and $|M|$ are not too large, i.e. there are not too many vertices with large eigenweight.
    \item[2.] Show that $|L|$ is in fact bounded.
    \item[3.] Relate the eigenweight of vertices to their degrees.
    \item[4.] Show that eigenweights of vertices in $L'$ are close to 1.
    \item[5.] Show that $|L'|=k$.
    \item[6.] Show that if $u\not\in L'$ then $u\sim v$ for all $v\in L'$.
\end{itemize}

\begin{lemma} \label{initial lambda estimate} We have
$$\sqrt{k(n-k)}\le\lambda\le\sqrt{2Cn}.$$
\end{lemma}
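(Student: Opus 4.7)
The plan is to obtain the two inequalities independently, each by a standard one-line estimate; both bounds are essentially sanity checks that will be used repeatedly in the rest of the argument to calibrate the thresholds $\eta,\sigma,\beta$.

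For the lower bound, I would first observe that $K_{k,n-k}$ is itself $\mathcal F$-free. This follows because in every one of the cases (a)--(f) of Theorem \ref{Thm big ex spex} the hypothesis supplies some $H\in\mathrm{EX}^{K_{k,n-k}}(n,\mathcal F)$, and by the very definition of $\mathrm{EX}^{K_{k,n-k}}$ this graph $H$ contains $K_{k,n-k}$ as a subgraph; in cases (b)--(f) one can see this concretely from the join structure $K_k+Y$, which contains $K_{k,n-k}$ on $V(K_k)\sqcup V(Y)$. Since $G\in\mathrm{SPEX}(n,\mathcal F)$ maximizes $\lambda$ over $\mathcal F$-free graphs on $n$ vertices, $\lambda=\lambda(G)\ge \lambda(K_{k,n-k})=\sqrt{k(n-k)}$.

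For the upper bound I would use the trace inequality $\lambda(G)^2\le\sum_i\lambda_i(A(G))^2=\mathrm{tr}(A(G)^2)=2e(G)$. Since $G$ is $\mathcal F$-free, $e(G)\le\mathrm{ex}(n,\mathcal F)\le Cn$ by the choice of $C$ and the assumption $\mathrm{ex}(n,\mathcal F)=O(n)$ (which is an explicit hypothesis of Theorem \ref{Thm big ex spex}; it also follows in cases (d)--(f) from the bound $e(H)=\binom{k}{2}+k(n-k)+Qn+O(1)=O(n)$ combined with $G$ being $\mathcal F$-free). Combining gives $\lambda\le\sqrt{2Cn}$.

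There is no serious obstacle here: both estimates are one-line consequences of definitions, and the only place where one must be a little careful is the verification that $K_{k,n-k}$ is $\mathcal F$-free, which is immediate from the $\mathrm{EX}^{K_{k,n-k}}$ hypothesis. The bound will later be used in the pigeonhole step (for instance, to see that $|L|$ cannot be too large, since a Perron vector with too many entries at least $\sigma$ would force $\lambda$ to exceed $\sqrt{2Cn}$).
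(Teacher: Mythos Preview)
Your proposal is correct and matches the paper's approach almost exactly: the lower bound is identical, and for the upper bound both arguments reduce to $\lambda^2\le 2e(G)\le 2Cn$. The only cosmetic difference is that you invoke the trace identity $\lambda^2\le\sum_i\lambda_i^2=\mathrm{tr}(A^2)=2e(G)$, whereas the paper reaches the same inequality via the eigenvector equation at the maximizing vertex $z$, writing $\lambda^2=\lambda^2 x_z=\sum_{u\sim z}\sum_{w\sim u}x_w\le\sum_{u\sim z}d(u)\le 2e(G)$.
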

\begin{proof}
Since $K_{k,n-k}$ is $\mathcal F$-free we have
$$\lambda\ge\lambda_1(K_{k,n-k})=\sqrt{k(n-k)}.$$
For the upper bound,
$$\lambda^2=\lambda^2x_{z}=\sum_{u\sim z}\sum_{w\sim u}x_w\le\sum_{u\sim z}d(u)\le\sum_{u\in V}d(u)=2e(G)\le 2Cn.$$
\end{proof}

\begin{lemma}\label{initial L estimate} We have
$$|L|\le\frac{2Cn}{\sigma\sqrt{k(n-k)}},\ \ |M|\le\frac{2Cn}{\sigma\beta\sqrt{k(n-k)}}.$$
\end{lemma}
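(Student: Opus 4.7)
The plan is to exploit the eigenvalue equation at a vertex together with the fact that the Perron eigenvector is scaled so that its maximum entry equals $1$. For any vertex $v$ we have
\[
\lambda\, x_v \;=\; \sum_{u\sim v} x_u \;\le\; d(v),
\]
since $x_u\le x_z=1$ for every $u\in V$. This converts a lower bound on $x_v$ into a lower bound on $d(v)$, up to the factor $\lambda$.

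Next I would sum this inequality over $v\in L$. On the one hand, every $v\in L$ satisfies $x_v\ge\sigma$, so the left-hand side is at least $\lambda\sigma|L|$. On the other hand, the right-hand side is at most $\sum_{v\in V}d(v)=2e(G)$. Because $G$ is $\mathcal F$-free, the hypothesis $\mathrm{ex}(n,\mathcal F)\le Cn$ gives $2e(G)\le 2Cn$. Combining these with the lower bound $\lambda\ge\sqrt{k(n-k)}$ from Lemma \ref{initial lambda estimate} yields
\[
|L| \;\le\; \frac{2e(G)}{\lambda\sigma} \;\le\; \frac{2Cn}{\sigma\sqrt{k(n-k)}},
\]
which is the first claimed inequality.

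The bound on $|M|$ is proved in exactly the same way: summing $\lambda x_v\le d(v)$ over $v\in M$ and using $x_v\ge\beta\sigma$ for $v\in M$ gives
\[
|M| \;\le\; \frac{2Cn}{\beta\sigma\sqrt{k(n-k)}}.
\]
There is no real obstacle here; the argument is a routine first-moment bound on how many vertices can have significant Perron weight, given that the total weight of $x$ into any vertex is at most the degree. The only ingredients needed are the normalization $x_z=1$, the upper bound on $e(G)$ coming from the assumption $\mathrm{ex}(n,\mathcal F)=O(n)$, and the lower bound on $\lambda$ already established.
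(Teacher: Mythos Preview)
Your proof is correct and essentially identical to the paper's argument: both use $\lambda x_v\le d(v)$, sum over $L$ (respectively $M$), and combine the lower bound $\lambda\ge\sqrt{k(n-k)}$ with $2e(G)\le 2Cn$ to conclude.
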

\begin{proof}
For any $v\in V$ we have $\lambda x_v=\sum_{u\sim v}x_u\le d(v)$. Hence
$$\sqrt{k(n-k)}|L|\sigma\le\lambda|L|\sigma\le\sum_{v\in L}\lambda x_v\le\sum_{v\in V}\lambda x_v\le\sum_{v\in V}d(v)=2e(G)\le 2Cn.$$
Solving for $|L|$ proves the first claim. A similar argument applies to $|M|$.
\end{proof}

\begin{lemma}\label{initial degree estimate} For any $v\in L$,
$$d(v)\ge\frac{\sigma k}{4(1+3C)}n.$$
\end{lemma}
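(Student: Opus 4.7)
The plan is to apply the squared eigenvalue equation
\[
\lambda^2 x_v \;=\; \sum_{u\sim v}\sum_{w\sim u} x_w \;=\; \sum_w N_2(v,w)\,x_w,
\]
where $N_2(v,w)=|\{u:u\sim v,\,u\sim w\}|$ is the number of walks of length two from $v$ to $w$, and then split the outer sum according to whether $w\in L$ or $w\in S$. For $w\in S$ I use $x_w<\sigma$ together with the identity $\sum_w N_2(v,w)=\sum_{u\sim v}d(u)\le 2e(G)\le 2Cn$ (the last inequality from $e(G)\le\mathrm{ex}(n,\mathcal F)\le Cn$), which bounds the $S$-contribution by $2Cn\sigma$. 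For $w\in L$ I use $x_w\le 1$ and $N_2(v,w)\le d(v)$, bounding the $L$-contribution by $|L|\cdot d(v)$ (the diagonal term $N_2(v,v)x_v=d(v)x_v$ is absorbed since $v\in L$).

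Combining these two estimates yields
\[
\lambda^2 x_v \;\le\; (x_v+|L|)\,d(v) + 2Cn\sigma,
\]
and rearranging gives
\[
d(v) \;\ge\; \frac{\lambda^2 x_v - 2Cn\sigma}{x_v + |L|}.
\]
To conclude I substitute the three inputs already established: $\lambda^2\ge k(n-k)$ from Lemma~\ref{initial lambda estimate}, $x_v\ge\sigma$ from the hypothesis $v\in L$, and the upper bound on $|L|$ from Lemma~\ref{initial L estimate}. With the parameter inequalities (i)--(iv) at the top of Section~\ref{Section EX SPEX} in hand, in particular $\beta<k/(2C)$ together with the smallness of $\sigma$ relative to $C/k$, a short calculation produces the claimed linear bound $d(v)\ge\frac{\sigma k}{4(1+3C)}n$.

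The main obstacle is matching the precise constant in the denominator. Because Lemma~\ref{initial L estimate} only controls $|L|$ up to $O(\sqrt{n})$, a naive substitution into the inequality above suggests only a $\sqrt{n}$-scale bound on $d(v)$; extracting the full linear factor requires carefully exploiting the structure of the numerator $\lambda^2 x_v-2Cn\sigma$, the fact that $2Cn\sigma\ll \lambda^2 x_v$ since $\sigma$ is chosen small, and the flexibility built into the parameter list. The precise accounting needed to arrive at the explicit constant $\frac{\sigma k}{4(1+3C)}$ is where the carefully chosen constants at the beginning of the section really pay off.
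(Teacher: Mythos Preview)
Your argument has a genuine gap, and the vagueness in your final paragraph is hiding two separate failures rather than a routine constant-chase.

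First, the numerator. You assert that ``$2Cn\sigma\ll \lambda^2 x_v$ since $\sigma$ is chosen small,'' but both quantities scale linearly with $\sigma$: for $v\in L$ you only know $x_v\ge\sigma$, so $\lambda^2 x_v$ is guaranteed only to be $\ge k(n-k)\sigma$. Since $K_{k,n-k}$ is $\mathcal F$-free with $k(n-k)$ edges, we must have $Cn\ge k(n-k)$, hence $C\ge k-o(1)$ and $2Cn\sigma > k(n-k)\sigma$. Your numerator $\lambda^2 x_v - 2Cn\sigma$ can therefore be \emph{negative}, and the inequality $d(v)\ge(\text{negative})/(\text{positive})$ is vacuous. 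No choice of $\sigma$ repairs this.

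Second, the denominator. Even if the numerator were salvaged to be of order $n$, your bound $N_2(v,w)\le d(v)$ for each $w\in L$ produces $|L|\cdot d(v)$ on the right, and Lemma~\ref{initial L estimate} gives only $|L|=O(\sqrt n)$. Dividing $\Theta(n)$ by $\Theta(\sqrt n)$ yields $d(v)=\Omega(\sqrt n)$, not $\Omega(n)$.

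The paper resolves both problems at once by introducing the auxiliary set $M=\{v:x_v\ge\beta\sigma\}$ with $\beta<k/(2C)$ (inequality (i)). Walks ending in $N_2(v)\setminus M$ are bounded using $x_w<\beta\sigma$, contributing at most $\beta\sigma\cdot e(G)\le \beta C\sigma n<\tfrac{1}{2}k\sigma n$; this is the fix for the numerator. Walks ending in $N_1(v)$ or in $M\cap N_2(v)$ are bounded not by $|L|\cdot d(v)$ but via the edge count $e(N_1)\le C\,d(v)$ and $e(N_1,M_2)\le C(d(v)+|M|)$, giving a coefficient of $1+3C$ on $d(v)$ plus an $O(\sqrt n)$ error from $|M|$; this is the fix for the denominator. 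The parameter $\beta$ and the set $M$ are not cosmetic---they are exactly what makes the lemma go through.
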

\begin{proof}
Let $v\in L$, $N_1=N_1(v)$, and $M_2=M_2(v)$. We have
$$\sigma k(n-k)\le\lambda^2x_v=\sum_{u\sim v}\sum_{w\sim u}x_w\le d(v)+2e(N_1)+e(N_1,M_2)+\underbrace{\sum_{u\sim v}\sum_{\substack{w\sim u\\w\in N_2-M_2}}x_w}_B$$
from the eigenvector equation, where in the first three terms in the right hand side we used the estimate $x_w\le 1$. We estimate $e(N_1)$, $e(N_1,M_2)$, and $B$ as follows. First, $N_1$ induces a subgraph of order $d(v)$ which does not contain any element of $\mathcal F$, so $e(N_1)\le Cd(v)$. Similarly we have $e(N_1,M_2)\le C\left(\frac{2Cn}{\sigma\beta\sqrt{k(n-k)}}+d(v)\right)$. For $B$, we use $x_w\le\sigma\beta$ and so we have $B\le\sigma\beta e(G)\le\sigma\beta Cn$. Rearranging the inequality, we get
$$\left(\sigma k-\sigma\beta C\right)n-\sigma k^2-\frac{2C^2n}{\sigma\beta\sqrt{k(n-k)}}\le (1+3C)d(v).$$
By inequality (i), taking $n$ large enough gives
$$d(v)\ge\frac{\sigma k}{4(1+3C)}n.$$
\end{proof}

\begin{lemma} There is a constant $C_1=C_1(\mathcal F)$ such that $|L|\le C_1$.
\end{lemma}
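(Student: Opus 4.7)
The plan is to combine the degree lower bound $d(v)\ge cn$ from Lemma~\ref{initial degree estimate} (where I set $c=\sigma k/(4(1+3C))$) with a Kővári–Sós–Turán-type double count to produce a large complete bipartite subgraph of $G$ as soon as $|L|$ is too large. Since $K_{k+1,\infty}$ contains some $F\in\mathcal F$ with $|V(F)|=m$, we have $F\subseteq K_{k+1,m}$, so exhibiting $K_{k+1,m}\subseteq G$ will contradict $\mathcal F$-freeness.

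Concretely, write $\ell=|L|$ and suppose toward a contradiction that $\ell>(k+1)/c$. Counting pairs $(u,S)$ with $u\in V$, $S\in\binom{L}{k+1}$, and $S\subseteq N(u)$ in two ways gives
$$\sum_{S\in\binom{L}{k+1}}\Bigl|\bigcap_{v\in S}N(v)\Bigr|=\sum_{u\in V}\binom{d_L(u)}{k+1}.$$
Since $\sum_{u\in V}d_L(u)=\sum_{v\in L}d(v)\ge cn\ell$, convexity of $x\mapsto\binom{x}{k+1}$ (Jensen) yields $\sum_u\binom{d_L(u)}{k+1}\ge n\binom{c\ell}{k+1}$, and averaging over the $\binom{\ell}{k+1}$ choices of $S$ produces some $S^*$ with
$$\Bigl|\bigcap_{v\in S^*}N(v)\Bigr|\ge\frac{n\binom{c\ell}{k+1}}{\binom{\ell}{k+1}}.$$
For any fixed $\ell>(k+1)/c$ the ratio on the right is a positive constant, so this common neighborhood grows linearly in $n$ and eventually exceeds $m$, giving $K_{k+1,m}\subseteq G$ and hence a copy of $F$, a contradiction. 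Thus $\ell\le\lceil(k+1)/c\rceil$, and I set $C_1:=\lceil(k+1)/c\rceil$, which depends only on $\mathcal F$ (through $\sigma$, $k$, and $C$).

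The computation is essentially routine, so there is no serious obstacle; the only point requiring care is that the lower bound on $d(v)$ from Lemma~\ref{initial degree estimate} is genuinely linear in $n$, which is where the constant choices (i)–(iv) are used, and that $\binom{c\ell}{k+1}$ is positive for $\ell>(k+1)/c$ so that Jensen's inequality is not vacuous. One could alternatively phrase the argument as a direct application of the Kővári–Sós–Turán inequality to the bipartite graph $G[L,V\setminus L]$, but the Jensen version above is the cleanest route.
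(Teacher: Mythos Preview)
Your argument is correct, but it takes a more elaborate route than the paper. The paper simply sums the linear degree lower bound over $L$ and compares with the global edge count:
\[
|L|\cdot\frac{\sigma k}{4(1+3C)}\,n\;\le\;\sum_{v\in L}d(v)\;\le\;2e(G)\;\le\;2Cn,
\]
yielding $|L|\le 8C(1+3C)/(\sigma k)=:C_1$ in one line. Your K\H{o}v\'ari--S\'os--Tur\'an double count instead exploits the forbidden $K_{k+1,m}$ directly rather than the hypothesis $\mathrm{ex}(n,\mathcal F)\le Cn$, which is a genuinely different resource; this is the same mechanism the paper uses later (e.g.\ in Lemma~\ref{z vertex lemma}) to force $|L'|\le k$, so your argument is in some sense a preview of that step. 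Two small points worth tightening: the Jensen step needs the standard convex-minorant fix since $x\mapsto\binom{x}{k+1}$ is not convex on all of $[0,\infty)$, and your phrase ``for any fixed $\ell$'' should be replaced by an observation that the ratio $\binom{c\ell}{k+1}/\binom{\ell}{k+1}$ is bounded below uniformly over all integers $\ell>(k+1)/c$ (it tends to $c^{k+1}$ as $\ell\to\infty$ and is positive at the smallest admissible $\ell$), since a priori $\ell$ could vary with $n$.
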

\begin{proof}
We use the same argument from Lemma \ref{initial L estimate} with the updated information from Lemma \ref{initial degree estimate}. We have
$$|L|\frac{\sigma k}{4(1+3C)}n\le\sum_{v\in L}d(v)\le 2e(G)\le 2Cn $$
so
$$|L|\le\frac{8C(1+3C)}{\sigma k}=:C_1.$$
\end{proof}

\begin{lemma} \label{vertex eigenweight equation} Let $v\in V,S_1=S_1(v)$, and $L_i=L_i(v)$. Then
$$k(n-k)x_v\le d(v)x_v+e(S_1,L_1\cup L_2)+\frac{\varepsilon^2 n}{2}.$$
\end{lemma}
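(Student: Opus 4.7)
The plan is to apply the eigenvalue equation twice from $v$ and then carefully partition the walks of length two. Starting from $\lambda x_v = \sum_{u \sim v} x_u$ and iterating,
$$\lambda^2 x_v \;=\; \sum_{u \in N_1(v)} \sum_{w \in N(u)} x_w \;=\; d(v) x_v \;+\; \sum_{u \in N_1(v)} \sum_{\substack{w \in N(u) \\ w \ne v}} x_w,$$
where the $d(v) x_v$ term collects exactly the walks $v \to u \to v$ that return to the base point. I would then split the remaining double sum according to whether the middle vertex $u$ lies in $L_1 = L \cap N_1(v)$ or in $S_1 = S \cap N_1(v)$.

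For walks through $L_1$, I would relax $w \ne v$ and bound the inner sum crudely by $\sum_{w \in N(u)} x_w = \lambda x_u \le \lambda$, yielding a total contribution of at most $|L_1|\,\lambda \le C_1 \sqrt{2Cn}$ by the previous lemmas; this is $o(n)$. For walks through $S_1$, I would split the inner sum according to whether $w \in L$ or $w \in S$. The key observation here is that since $u \sim v$, every neighbor $w \ne v$ of $u$ lies in $N_1(v) \cup N_2(v)$, so any such $w$ that lies in $L$ must actually lie in $L_1 \cup L_2$. Using $x_w \le 1$ on the $L$-part yields a contribution of at most
$$\sum_{u \in S_1} \bigl|N(u) \cap (L_1 \cup L_2)\bigr| \;=\; e(S_1, L_1 \cup L_2),$$
while on the $S$-part the bound $x_w \le \sigma$ together with $\sum_{u \in S_1} d(u) \le 2e(G) \le 2Cn$ gives a contribution of at most $2C\sigma n$.

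Combining these pieces yields
$$\lambda^2 x_v \;\le\; d(v) x_v \;+\; e(S_1, L_1 \cup L_2) \;+\; 2C\sigma n \;+\; C_1 \sqrt{2Cn}.$$
Applying $\lambda^2 \ge k(n-k)$ from Lemma \ref{initial lambda estimate}, and absorbing the two error terms via condition (ii), which gives $2C\sigma < \varepsilon^2/5$, and the fact that $C_1 \sqrt{2Cn} = o(n)$ is eventually at most $\varepsilon^2 n/10$, produces the claimed inequality for $n$ large enough. There is no real obstacle: the only non-bookkeeping step is the containment $N(u) \setminus \{v\} \subseteq N_1(v) \cup N_2(v)$ for $u \in S_1$, which is what lets the cross-term collapse exactly into $e(S_1, L_1 \cup L_2)$ rather than a larger object.
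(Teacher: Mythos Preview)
Your argument is correct and follows essentially the same route as the paper: iterate the eigenvector equation, isolate the $S_1\to L_1\cup L_2$ contribution as the main term, and absorb all remaining walks into an $O(\sigma n)$ (or smaller) error controlled by inequality (ii). The only cosmetic difference is that you dispose of all walks through $L_1$ at once via $|L_1|\lambda\le C_1\sqrt{2Cn}$, whereas the paper partitions these further into $2e(L)$, $e(L_1,S_1)\sigma$, and part of $e(N_1,S_2)\sigma$; either bookkeeping works.
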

\begin{proof} Let $c=x_v$. We have
$$\begin{aligned}
    k(n-k)c&\le \lambda^2c=\sum_{u\sim v}\sum_{w\sim u}x_w=d(v)c+\sum_{u\sim v}\sum_{\substack{w\sim u\\w\ne v}}x_w\\
    &\le d(v)c+\sum_{u\in S_1(v)}\sum_{\substack{w\sim u\\w\in L_1\cup L_2}}x_w+2e(S_1)\sigma+2e(L)+e(L_1,S_1)\sigma+e(N_1,S_2)\sigma.
\end{aligned}$$
We apply $e(G)\le Cn$ and $|L|\le C_1$ to get
$$2e(S_1)\sigma+2e(L)+e(L_1,S_1)\sigma+e(N_1,S_2)\sigma\le 2Cn\sigma+2C_1^2+Cn\sigma+Cn\sigma\le 5Cn\sigma.$$
So
$$\begin{aligned}
    k(n-k)c&\le d(v)c+\sum_{u\in S_1}\sum_{w\sim u,w\in L_1\cup L_2}x_w+5Cn\sigma\le d(v)c+e(S_1,L_1\cup L_2)+\frac{\varepsilon^2n}{2}
\end{aligned}$$
where the last inequality holds by (ii).
\end{proof}

\begin{lemma} If $v\in L'$ and $c=x_v$, then $d(v)\ge (c-\varepsilon)n$.
\end{lemma}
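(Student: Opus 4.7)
The approach is to sharpen the bound on $e(S_1, L_1 \cup L_2)$ appearing in Lemma~\ref{vertex eigenweight equation} and then rearrange to isolate $d(v)$. Specifically, I plan to establish
\[e(S_1, L_1 \cup L_2) \le (k-1) d(v) + O_{\mathcal F}(1),\]
where $O_{\mathcal F}(1)$ denotes a constant depending only on $\mathcal F$ (through $C_1$, $k$, and the size of a fixed $F \in \mathcal F$ contained in $K_{k+1,\infty}$).

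To prove this, classify each $u \in S_1$ as \emph{good} if $d_{L_1 \cup L_2}(u) \le k - 1$ and \emph{bad} if $d_{L_1 \cup L_2}(u) \ge k$. Good vertices contribute at most $(k-1)|S_1| \le (k-1) d(v)$ to $e(S_1, L_1 \cup L_2)$. For the bad vertices, fix $F \in \mathcal F$ with $F \subseteq K_{k+1, N}$ for some integer $N = N(F)$; such an $F$ exists since $K_{k+1,\infty}$ is not $\mathcal F$-free. Each bad $u$ lies in the common neighborhood of some $(k+1)$-set $W \cup \{v\}$ with $W \subseteq L_1 \cup L_2$ and $|W| = k$ (noting $v \notin L_1 \cup L_2$, since $L_1 \cup L_2 \subseteq N_1(v) \cup N_2(v)$). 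If any such codegree were at least $N$, then $G$ would contain $K_{k+1, N} \supseteq F$ as a subgraph, contradicting $\mathcal F$-freeness; hence each codegree is at most $N - 1$. Summing over the $\binom{|L_1 \cup L_2|}{k} \le \binom{C_1}{k}$ choices of $W$ bounds the number of bad vertices by $\binom{C_1}{k}(N-1) = O_{\mathcal F}(1)$, and each contributes at most $|L_1 \cup L_2| \le C_1$ to $e(S_1, L_1 \cup L_2)$.

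Substituting this bound into Lemma~\ref{vertex eigenweight equation} and rearranging yields
\[d(v) \ge \frac{k(n-k) c - \varepsilon^2 n / 2 - O_{\mathcal F}(1)}{c + k - 1}.\]
A direct computation gives
\[\frac{kc}{c + k - 1} - (c - \varepsilon) = \frac{c(1-c)}{c + k - 1} + \varepsilon \ge \varepsilon,\]
valid for $c \in (0,1]$ and $k \ge 1$. Combined with $c \ge \eta > \varepsilon$ from $v \in L'$ (inequality (iii)) and $n$ large enough to absorb the $O(\varepsilon^2 n)$ and $O_{\mathcal F}(1)$ error terms, this yields $d(v) \ge (c - \varepsilon)n$, as required.

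The principal obstacle in this plan is the codegree upper bound, which is the only place the $K_{k+1,\infty}$ assumption enters; everything else is essentially algebra guided by the constants (i)--(viii).
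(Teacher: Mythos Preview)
Your proposal is correct and follows essentially the same approach as the paper: both arguments classify vertices of $S_1$ by whether they have at least $k$ neighbors in $L_1\cup L_2$, bound the good vertices' contribution by $(k-1)|S_1|\le (k-1)d(v)$, and use pigeonhole over $k$-subsets of $L$ together with the $K_{k+1,\infty}$ hypothesis to control the bad vertices. The only cosmetic difference is that the paper argues by contradiction (assuming $d(v)<(c-\varepsilon)n$ and deducing at least $\sqrt n$ bad vertices, hence a large $K_{k+1,\ell}$), whereas you argue directly, bounding the number of bad vertices by the constant $\binom{C_1}{k}(N-1)$ and then solving the resulting inequality for $d(v)$.
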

\begin{proof}
Let $S_1=S_1(v)$ and $L_i=L_i(v)$. Assume for a contradiction that $d(v)<cn-\varepsilon n$. Thus Lemma \ref{vertex eigenweight equation} gives
$$k(n-k)c\le(cn-\varepsilon n)c+e(S_1,L_1\cup L_2)+\frac{\varepsilon^2 n}{2}.$$
Hence
$$e(S_1,L_1\cup L_2)\ge (k-c+\varepsilon)nc-ck^2-\frac{\varepsilon^2n}{2}=(k-c)nc+\varepsilon nc-ck^2-\frac{\varepsilon^2n}{2}\ge (k-1)nc+\frac{\varepsilon^2n}{2},$$
using that $v\in L'$ implies that $c\ge\eta>\varepsilon$ and $c\le 1$.

Now we claim there are at least $\sqrt n$ vertices in $S_1$ with at least $k$ neighbors in $L_1\cup L_2$. For if not, then $e(S_1,L_1\cup L_2)<(k-1)|S_1|+|L|\sqrt n<(k-1)(c-\varepsilon)n+C_1\sqrt n$ because $|S_1|\le d(v)$ and $|L|\le C_1$. Thus $(k-1)cn-(k-1)\varepsilon n+C_1\sqrt n>e(S_1,L_1\cup L_2)\ge(k-1)cn+\frac{\varepsilon^2n}{2}$ which is a contradiction for $n$ large enough. Thus, let $D$ be a set of $\sqrt n$ vertices in $S_1$ each with at least $k$ neighbors in $L$. Since there are only ${|L|\choose k}$ options for these $k$ neighbors, there is some set of $k$ vertices in $L_1\cup L_2$ with at least $\sqrt n/{|L|\choose k}$ common neighbors in $D$. Since $|L|\le C_1$ is bounded, we have $\sqrt{n}/{|L|\choose k}\to\infty$ as $n\to\infty$, and so by adding the vertex $v$ we find that $G\supseteq K_{k+1,\ell}$ for arbitrarily large $\ell$. However $K_{k + 1, \ell}\supseteq F\in\mathcal F$ for $\ell$ large enough, which is a contradiction.
\end{proof}

\begin{lemma} \label{z vertex lemma}
Let $z$ be the vertex with $x_z=1$, $S_1=S_1(z)$, and $L_i=L_i(z)$. We have $(1-\varepsilon)kn\le e(S_1,\{z\}\cup L_1\cup L_2)\le(k+\varepsilon) n$.
\end{lemma}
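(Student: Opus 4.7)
The plan is to prove the two inequalities separately, each with a short calculation.

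For the lower bound I would apply Lemma \ref{vertex eigenweight equation} directly to $v = z$ with $c = x_z = 1$, which gives
$$k(n-k) \le d(z) + e(S_1, L_1 \cup L_2) + \frac{\varepsilon^2 n}{2}.$$
Rearranging yields $e(S_1, L_1 \cup L_2) \ge kn - k^2 - d(z) - \varepsilon^2 n / 2$. The next observation is that $e(S_1, \{z\}) = |S_1 \cap N(z)| = d(z) - d_L(z) \ge d(z) - C_1$, since $z$ has at most $|L| \le C_1$ neighbors in $L$. Adding these two estimates, the $d(z)$ terms cancel and we get
$$e(S_1, \{z\} \cup L_1 \cup L_2) \ge kn - k^2 - C_1 - \frac{\varepsilon^2 n}{2},$$
which exceeds $(1-\varepsilon)kn$ for $n$ large enough, since $\varepsilon k - \varepsilon^2/2 > 0$.

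For the upper bound I would argue by contradiction. If $|\{z\} \cup L_1 \cup L_2| \le k$, the bound $e(S_1, \{z\} \cup L_1 \cup L_2) \le k|S_1| \le kn$ is immediate, so one may assume $|\{z\} \cup L_1 \cup L_2| \ge k+1$ and that $e(S_1, \{z\} \cup L_1 \cup L_2) > (k+\varepsilon) n$. Each vertex of $S_1$ has at most $|L|+1 \le C_1 + 1$ neighbors in this set, and a standard double count shows that the number $D$ of vertices of $S_1$ with at least $k+1$ neighbors in $\{z\} \cup L_1 \cup L_2$ satisfies $D \ge \varepsilon n / (C_1 + 1 - k)$, which tends to infinity. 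Pigeonholing over the bounded number $\binom{C_1+1}{k+1}$ of $(k+1)$-subsets of $\{z\} \cup L_1 \cup L_2$ then yields a fixed $(k+1)$-set with unboundedly many common neighbors in $S_1$, giving $G \supseteq K_{k+1, \ell}$ for arbitrarily large $\ell$. Taking $\ell \ge m - k - 1$ embeds some $F \in \mathcal F$, contradicting that $G$ is $\mathcal F$-free.

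The main subtlety, rather than any real obstacle, is that Lemma \ref{vertex eigenweight equation} only directly controls $e(S_1, L_1 \cup L_2)$, so one has to add the $z$-contribution $e(S_1, \{z\}) \approx d(z)$ separately in order to cancel the otherwise-troublesome $d(z)$ term on the right. The upper bound argument is a direct reuse of the $K_{k+1, \infty} \supseteq F$ pigeonhole already employed in the proof that $d(v) \ge (x_v - \varepsilon) n$ for $v \in L'$, with the role of the pivot set now played by $\{z\} \cup L_1 \cup L_2$; its success relies only on $|L|$ being bounded, so that the number of candidate $(k+1)$-subsets is constant in $n$.
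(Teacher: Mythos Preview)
Your proof is correct and follows essentially the same approach as the paper's. The lower bound is identical. For the upper bound, the paper counts vertices of $S_1$ having at least $k$ neighbours in $L_1\cup L_2$ and then adjoins $z$ (which is automatically adjacent to all of $S_1$) to obtain the $K_{k+1,\ell}$, whereas you count vertices with at least $k+1$ neighbours in $\{z\}\cup L_1\cup L_2$ and pigeonhole over $(k{+}1)$-subsets; since every vertex of $S_1$ is already adjacent to $z$, these two counts coincide and the arguments are equivalent up to bookkeeping.
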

\begin{proof}
For the lower bound, we use Lemma \ref{vertex eigenweight equation}:
$$k(n-k)(1)\le d(z)+e(S_1,L_1\cup L_2)+\frac{\varepsilon^2n}{2}\le e(S_1,\{z\}\cup L_1\cup L_2)+C_1+\frac{\varepsilon^2n}{2}\le e(S_1,\{z\}\cup L_1\cup L_2)+k\varepsilon n-k^2$$
where the second inequality follows from $e(S_1,\{z\})\ge d(z)-C_1$ and the last inequality follows from $\varepsilon<2k$. For the upper bound, suppose for a contradiction that $e(S_1,\{z\}\cup L_1\cup L_2)>(k+\varepsilon) n$. Let $\gamma=\varepsilon/C_1$. We claim there are at least $\gamma n$ vertices inside $S_1$ with at least $k$ neighbors in $L_1\cup L_2$. If not then
$$e(S_1,L_1\cup L_2)<(k-1)|S_1|+|L|\gamma n\le (k-1)n+\varepsilon n=(k+\varepsilon-1)n$$ because $|S_1|\le n$ and $|L|\le C_1$. Since $d(z)\le n$, we then have $e(S_1,\{z\}\cup L_1\cup L_2)\le (k+\varepsilon) n$, contradicting the assumption. Hence there is a subset $D\subseteq S_1$ with at least $\gamma n$ vertices such that every vertex in $D$ has at least $k$ neighbors in $L_1\cup L_2$. Since there are at most ${|L|\choose k}$ options for these $k$ neighbors, there exists some set of $k$ vertices in $L_1\cup L_2$ with at least $\gamma n/{|L|\choose k}$ common neighbors in $S_1$. Since $|L|\le C_1$, we have $\gamma n/{|L|\choose k}\to\infty$ as $n\to\infty$, so adding in the vertex $z$ means that $G\supseteq K_{k+1,\ell}$ for arbitrarily large $\ell$, hence contains $F$, which is a contradiction.
\end{proof}

\begin{lemma} \label{large-weight-vertex-degree} For all $v\in L'$, $d(v)\ge\left(1-\frac{2}{5k^3}\right)n$ and $x_v\ge 1-\frac{1}{4k^3}$. 
\end{lemma}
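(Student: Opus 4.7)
The two bounds are tightly coupled through the preceding lemma, which gives $d(v) \ge (x_v - \varepsilon)n$ for $v \in L'$. By inequality (vi), $\varepsilon < (2/5 - 1/4)/k^3 = 3/(20k^3)$, so once $x_v \ge 1 - 1/(4k^3)$ is established, the degree bound $d(v) \ge (1 - 1/(4k^3) - \varepsilon)n > (1 - 2/(5k^3))n$ follows immediately. The main task is therefore to prove the eigenweight bound.

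The first step is to apply the pigeonhole argument from the second half of Lemma \ref{z vertex lemma} with $v$ in place of $z$: since $v$ itself is a common neighbor of every vertex in $S_1(v)$, the $K_{k+1,\infty}$-freeness assumption prevents more than $\gamma n = \varepsilon n/C_1$ vertices of $S_1(v)$ from having at least $k$ neighbors in $L_1(v) \cup L_2(v)$, else pigeonhole over the $\binom{|L|}{k}$ possible $k$-subsets of $L$ would produce $K_{k+1,\ell}$ for arbitrary $\ell$. This yields $e(S_1(v), \{v\} \cup L_1(v) \cup L_2(v)) \le (k+\varepsilon)n$, and substituting into Lemma \ref{vertex eigenweight equation} together with $|S_1(v)| \ge d(v) - C_1$ produces the master inequality
\[
k(n-k)x_v + d(v)(1 - x_v) \le (k+\varepsilon)n + C_1 + \varepsilon^2 n/2.
\]

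To extract the sharp lower bound on $x_v$, I would compare the eigenvector equations at $v$ and $z$, writing $\lambda(1-x_v) = \sum_{u \in N(z)\setminus N(v)} x_u - \sum_{u \in N(v)\setminus N(z)} x_u$. Using $|L'| \le C_1$ and $x_u < \eta$ for $u \notin L'$, the first sum is bounded by $C_1 + \eta|N(z)\setminus N(v)|$. The crux is to bound $|N(z)\setminus N(v)|$ by a constant. I expect this to come from a Kelmans-type edge-addition: adding all edges $\{vu : u \in N(z)\setminus N(v)\}$ strictly increases $\lambda_1$, so by the spectral extremality of $G$ the augmented graph must contain some $F \in \mathcal F$; a careful embedding analysis — swapping $v \mapsto z$ in any putative copy of $F$ whenever the $F$-neighbors of $v$ all lie in $N(v) \cap N(z)$, together with a counting argument over the many possible copies — then forces $|N(z)\setminus N(v)| = O(1)$. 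This gives $1-x_v = O(1/\sqrt{n}) < 1/(4k^3)$ for $n$ large, as required.

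The main obstacle is precisely step 3: a direct manipulation of the master inequality yields only the degenerate bound $(d(v) - kn)(1 - x_v) \le \varepsilon n + O(1)$, which is automatic since $d(v) \le n-1 < kn$ for $k \ge 2$, so one cannot conclude from the eigenvalue inequality alone. The calibration of constants in (v)--(viii), and inequality (vi) in particular, is designed precisely so that the bounds $1 - 1/(4k^3)$ and $1 - 2/(5k^3)$ match up once the symmetric-difference control $|N(v) \triangle N(z)| = O(1)$ is obtained via spectral extremality.
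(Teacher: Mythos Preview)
Your reduction of the degree bound to the eigenweight bound via inequality (vi) is correct, and your ``master inequality'' derivation is fine as far as it goes. But the core of your argument --- the Kelmans-type edge-addition to force $|N(z)\setminus N(v)|=O(1)$ --- does not work and is not what the paper does. At this stage we only know $d(v)\ge(\eta-\varepsilon)n$, so $|N(z)\setminus N(v)|$ can genuinely be of order $(1-\eta)n$. After adding the edges $\{vu:u\in N(z)\setminus N(v)\}$, the new neighborhood of $v$ is $N(v)\cup N(z)$; a copy of some $F\in\mathcal F$ in the augmented graph may have $F$-neighbors of $v$ lying in $N(v)\setminus N(z)$, and then the swap $v\mapsto z$ fails. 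The copy may also use $z$ itself. No ``counting over many copies'' rescues this, since $\mathcal F$ can be infinite. Consequently you never obtain the bound on $|N(z)\setminus N(v)|$, and your first-order comparison $\lambda(1-x_v)=\sum_{N(z)\setminus N(v)}x_u-\sum_{N(v)\setminus N(z)}x_u$ stalls.

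The paper's argument avoids spectral extremality and edge modifications entirely. Assume for contradiction that $x_v<1-\tfrac{1}{4k^3}$ and return to the \emph{second}-order eigenvector inequality at $z$ (not at $v$). In the estimate underlying Lemma~\ref{z vertex lemma}, isolate the contribution of the single vertex $v\in L_1(z)\cup L_2(z)$: each edge from $S_1(z)$ into $v$ contributes $x_v$ rather than $1$, so compared with the bound $(k+\varepsilon)n+\varepsilon^2 n/2$ one saves at least $\tfrac{1}{4k^3}|S_1(z)\cap N(v)|$. Since $k(n-k)\le\lambda^2$ and the bound is tight to within $\varepsilon n+\varepsilon^2 n/2+|L|+k^2\le 2\varepsilon n$, this forces $|N(z)\cap N(v)|<8k^3\varepsilon n$. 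But $d(z)\ge(1-\varepsilon)n$ and $d(v)\ge(\eta-\varepsilon)n$ give $|N(z)\cap N(v)|\ge(\eta-2\varepsilon)n$, so $\eta<(8k^3+2)\varepsilon$, contradicting inequality (v). The point you missed is to exploit the \emph{tightness} of Lemma~\ref{z vertex lemma} at $z$: a small $x_v$ creates slack there, which is impossible. Inequality (v) is calibrated exactly for this comparison, not for an embedding argument.
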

\begin{proof}
We first show $x_v\ge 1-\frac{1}{4k^3}$. Assume for a contradiction there is some $v\in L'$ with $x_v<1-\frac{1}{4k^3}$. Then the eigenvector equation for $z$ gives
$$\begin{aligned}
k(n-k)&\le\lambda^2<e(S_1(z),\{z\}\cup L_1(z)\cup L_2(z)-\{v\})+|N_1(z)\cap N_1(v)|x_v+\frac{\varepsilon^2n}{2}\\
&<(k+\varepsilon)n-|S_1(z)\cap N_1(v)|+|N_1(z)\cap N_1(v)|\left(1-\frac{1}{4k^3}\right)+\frac{\varepsilon^2n}{2}\\
&=kn+\varepsilon n+|L_1(z)\cap N_1(v)|-|N_1(z)\cap N_1(v)|\frac{1}{4k^3}+\frac{\varepsilon^2n}{2}
\end{aligned}$$
where in the first line, the $\varepsilon^2n/2$ term absorbs both the eigenweights of the vertices in $S$ (using inequality (iv)) and the weights from the edges in $L$ since $|L|\le C_1$. This implies
$$\frac{1}{4k^3} |N_1(z)\cap N_1(v)|<\varepsilon n+\frac{\varepsilon^2n}{2}+|L|+k^2\le 2\varepsilon n.$$
However, since $v\in L'$ we have $x_v\ge\eta$ and so $d(v)\ge(\eta-\varepsilon)n$, so $|N_1(z)\cap N_1(v)|\ge(\eta-2\varepsilon)n.$ This contradicts inequality (v). Now since $x_v\ge 1-\frac{1}{4k^3}$, we have $d(v)\ge(x_v-\varepsilon)n\ge\left(1-\frac{1}{4k^3}-\varepsilon\right)n\ge\left(1-\frac{2}{5k^3}\right)n$ using inequality (vi).
\end{proof}

\begin{lemma} We have $|L'|=k$.
\end{lemma}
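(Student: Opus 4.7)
The plan is to prove both inequalities $|L'|\le k$ and $|L'|\ge k$ by playing the estimate $\lambda^2\ge k(n-k)$ from Lemma \ref{initial lambda estimate} against Lemma \ref{large-weight-vertex-degree}, which tells us that every vertex of $L'$ has eigenweight close to $1$ and degree close to $n$.

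For the upper bound, I would argue by contradiction: if $|L'|\ge k+1$, pick any $k+1$ vertices $v_1,\dots,v_{k+1}\in L'$. By Lemma \ref{large-weight-vertex-degree}, each $v_i$ has degree at least $(1-2/(5k^3))n$, so the complement of $N(v_i)$ has size at most $2n/(5k^3)$. A union bound yields
\[
\Bigl|\bigcap_{i=1}^{k+1} N(v_i)\Bigr|\ge n-\frac{2(k+1)}{5k^3}n,
\]
which tends to infinity (the constant $2(k+1)/(5k^3)$ is less than $1$ for all $k\ge 1$). Hence $G$ contains $K_{k+1,\ell}$ for arbitrarily large $\ell$, and since $K_{k+1,\infty}$ is not $\mathcal F$-free, $G$ contains some $F\in\mathcal F$, a contradiction.

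For the lower bound, which I expect to be the harder half, I would apply the eigenvalue equation at the distinguished vertex $z$ with $x_z=1$. Expanding,
\[
\lambda^2=\lambda^2 x_z=\sum_{u\sim z}\sum_{w\sim u}x_w=d(z)+\sum_{w\ne z}|N(z)\cap N(w)|\,x_w.
\]
Split the sum over $w$ into $w\in L'\setminus\{z\}$ and $w\notin L'$. For $w\in L'\setminus\{z\}$, bound $|N(z)\cap N(w)|\le n$ and $x_w\le 1$, contributing at most $(|L'|-1)n$. For $w\notin L'$, use $x_w<\eta$ and
\[
\sum_{w\notin L'}|N(z)\cap N(w)|\le\sum_{u\in N(z)}d(u)\le 2e(G)\le 2Cn,
\]
contributing at most $2C\eta n$. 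Together with $d(z)\le n$, this gives
\[
\lambda^2\le n+(|L'|-1)n+2C\eta n=|L'|n+2C\eta n.
\]
Assume for contradiction $|L'|\le k-1$. Combined with $\lambda^2\ge k(n-k)=kn-k^2$, we obtain $(1-2C\eta)n\le k^2$. Since inequality (viii) forces $2C\eta<1$, this is a contradiction for $n$ large enough, so $|L'|\ge k$.

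The main obstacle is the lower bound: one needs the quantitative fact that $\eta$ is small enough (precisely, $2C\eta<1$) so that the ``negligible'' vertices outside $L'$ really cannot make up the deficit in $\lambda^2$ caused by having too few high-weight vertices. This is exactly what inequality (viii) was set up to guarantee, and it is the key way the particular choice of constants in Subsection \ref{subsection common 2} enters the argument.
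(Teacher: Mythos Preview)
Your proof is correct. The upper bound argument is identical to the paper's. For the lower bound, however, you take a genuinely different and more direct route: you expand $\lambda^2$ at $z$, bound the contribution of $w\notin L'$ by $\eta\sum_{u\in N(z)}d(u)\le 2C\eta n$ using only $e(G)\le Cn$, and invoke inequality (viii) to get $2C\eta<1$. The paper instead refines Lemma \ref{vertex eigenweight equation} by splitting $L_1\cup L_2$ into $L'$ and its complement, then appeals to the upper bound $e(S_1,\{z\}\cup L_1\cup L_2)\le(k+\varepsilon)n$ from Lemma \ref{z vertex lemma}, and closes with inequality (vii).

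Your approach is more elementary in that it bypasses Lemma \ref{z vertex lemma} entirely and needs no refinement of Lemma \ref{vertex eigenweight equation}. The paper's route, by contrast, reuses machinery already developed and keeps the argument within the $S_1$--$L$ bookkeeping framework used throughout the subsection. One small remark: inequality (viii) was not in fact set up for this lemma (the paper uses it only in Lemma \ref{lemma proof of common result}); it is a happy accident that it also delivers $2C\eta<1$ for your argument, while the paper designed (vii) specifically for the present step.
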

\begin{proof}
If $|L'|\ge k+1$ then the common neighborhood of $k+1$ vertices in $L'$ would have at least $n-(k+1)\frac{2}{5k^3}n$ vertices, so $G$ contains $K_{k+1,\ell}$ for arbitrarily large $\ell$, a contradiction. 

On the other hand if $|L'|\le k-1$ then a refinement of Lemma \ref{vertex eigenweight equation} applied to $z$ along with Lemma \ref{z vertex lemma} gives the following contradiction,
$$k(n-k)\le d(z)+e(S_1,L'-\{z\})+e(S_1,(L_1\cup L_2)-L')\eta+\frac{\varepsilon^2 n}{2}\le (k-1)n+(k+\varepsilon)n\eta+\frac{\varepsilon^2n}{2}<(k-1/2)n,$$
where the last inequality follows from (vii).
\end{proof}

Since $|L'|=k$ and every vertex in $L'$ has degree at least $\left(1-\frac{2}{5k^3}\right)n$, the common neighborhood of $L'$ has at least $(1-\frac{2}{5k^2})n$ vertices. Let $R$ be this common neighborhood and $E=V-L'-R$. Note that $|E|\le\frac{2}{5k^2}n$. The following lemma completes the proof of Proposition \ref{common result}.

\begin{lemma} \label{lemma proof of common result}
We have $E=\emptyset$.
\end{lemma}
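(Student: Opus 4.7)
The plan is a single-vertex switching argument by contradiction. Suppose some $u\in E$; let $G^*$ be obtained from $G$ by deleting every edge incident to $u$ and adding an edge between $u$ and each vertex of $L'$. The goal is to show that $G^*$ is $\mathcal F$-free and $\lambda(G^*)>\lambda(G)$, contradicting the extremality of $G$.

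For the $\mathcal F$-freeness of $G^*$, the plan is a vertex-replacement argument using $R$. Any hypothetical $F\subseteq G^*$ with $F\in\mathcal F$ must use a newly added edge, so $u\in V(F)$ and $N_F(u)\subseteq L'$. Because $|R|\ge(1-2/(5k^2))n$ grows with $n$ while $|V(F)|\le m'$ is bounded, I can pick $r\in R\setminus V(F)$; replacing $u$ by $r$ transfers every edge of $F$ incident to $u$ to an edge of $G$ incident to $r$ (since $r$ is adjacent to all of $L'$), yielding $F\subseteq G$, contradicting the $\mathcal F$-freeness of $G$.

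For the spectral inequality, let $x$ be the Perron vector of $G$. Using the identity
$\sum_{v\in L'\setminus N_G(u)}x_v-\sum_{w\in N_G(u)\setminus L'}x_w=\sum_{v\in L'}x_v-\sum_{w\in N_G(u)}x_w=\sum_{v\in L'}x_v-\lambda x_u$,
the Rayleigh quotient yields
\begin{equation*}
\lambda(G^*)-\lambda(G)\ge\frac{2x_u}{x^T x}\Bigl(\sum_{v\in L'}x_v-\lambda x_u\Bigr).
\end{equation*}
Combining $x_z=1$ with Lemma \ref{large-weight-vertex-degree} gives $\sum_{v\in L'}x_v\ge k-1/(4k^2)$, so because $x_u>0$ the task reduces to proving $\lambda x_u<k-1/(4k^2)$.

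The main obstacle is this upper bound on $\lambda x_u$. First I will bound $d_R(u)$: the set $L'\cup\{u\}$ together with its common $G$-neighborhood $N_R(u)$ contains $K_{k+1,d_R(u)}$ as a subgraph, and since $K_{k+1,\ell}$ contains the forbidden $F$ once $\ell$ is large, this forces $d_R(u)\le\ell_0$ for some constant $\ell_0=\ell_0(\mathcal F)$. Splitting $\lambda x_u=\sum_{w\sim u}x_w$ by eigenweight tier, the $L'$-contribution is at most $k-1$ (as $u\in E$ misses some vertex of $L'$), and the contribution from $N(u)\cap R$ together with $N(u)\cap(L\setminus L')$ is at most $(\ell_0+C_1-k)\eta$, made negligible by the smallness of $\eta$ from inequality (viii). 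The remaining piece, $\sum_{w\in N(u)\cap E}x_w$, is the delicate one: it is controlled via the structural bound on $e(E)$ coming from Proposition \ref{matching argument 1} together with the case-specific hypotheses of Theorem \ref{Thm big ex spex} (in cases (a)-(c), $e(E)$ is bounded and so $d_E(u)=O(1)$; in cases (d)-(f) one exploits the additional assumption that $K_k+(\infty\cdot K_{1,d+1})$ is not $\mathcal F$-free, which forbids $u$ from sitting at the center of a large star inside $E$). Summing the bounds and using inequality (viii) once more delivers $\lambda x_u<k-1/(4k^2)$, completing the contradiction.
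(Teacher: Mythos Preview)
Your single-vertex switching approach has a genuine gap in the spectral half, and a secondary gap in the $\mathcal F$-freeness half.

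\textbf{The spectral gap.} You pick an \emph{arbitrary} $u\in E$ and try to prove $\lambda x_u<k-1/(4k^2)$. The problematic term is $\sum_{w\in N(u)\cap E}x_w$. Your appeal to Proposition~\ref{matching argument 1} is misplaced: that proposition concerns the structure of $X$ inside the \emph{edge}-extremal graph $H$, not the set $E$ inside the \emph{spectral}-extremal graph $G$; at this point in the argument we have not yet shown $G\supseteq K_{k,n-k}$, so nothing transfers. All we know about $G[E]$ is that it is $\mathcal F$-free, giving $e(E)\le C|E|$, which is linear in $n$. Hence for an arbitrary $u\in E$ one has no bound on $d_E(u)$ better than $|E|\le\tfrac{2}{5k^2}n$, and even using $x_w<\sigma$ for $w\in S$ the contribution $\sum_{w\in N(u)\cap E\cap S}x_w$ can be of order $\sigma n$, which blows up. Your case-split justification is also off: the hypothesis about $K_k+(\infty\cdot K_{1,d+1})$ appears only in case~(f), not in (d)--(e), and in any event it constrains star centers in $R$, not degrees inside $E$. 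The paper resolves this by not taking $u$ arbitrarily: it uses the averaging bound $\sum_{v\in E}d_{E\cup R}(v)\le 2C|E|+m|E|$ (the second term because $d_R(v)<m$ for every $v\in E$, via the $K_{k+1,m}$ argument you gave) to select a vertex $v\in E$ with $d_{E\cup R}(v)\le 2C+m$, after which the loss in $x^TAx$ is at most $(2C+m)\eta\cdot x_v$ and inequality~(viii) finishes.

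\textbf{The $\mathcal F$-freeness gap.} Your replacement ``pick $r\in R\setminus V(F)$'' relies on $|V(F)|\le m'$ being bounded. That is true in cases (d)--(f), where $\mathcal F$ is assumed finite, but cases (a)--(c) allow $\mathcal F$ to be infinite, so some $F\in\mathcal F$ appearing in $G^*$ could have $|V(F)|$ comparable to $n$ and the replacement step is unavailable. The paper handles this by a genuine case split: for finite $\mathcal F$ it uses exactly your replacement idea, while for infinite $\mathcal F$ it iterates the switching over all of $E$ to land in a graph $G^{(i)}\supseteq K_{k,n-k}$ with $\lambda(G^{(i)})>\lambda(G)$, and then exploits the very specific shape of $H$ in cases (a), (b), (c) (at most one edge outside $L'$) to reach a contradiction.
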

\begin{proof} 
Suppose otherwise. Note that, for each $v\in E$, we have $d_R(v)\le m$, or else by the definition of $m$ we can find a copy of $F$. Hence,
$$\sum_{v\in E}d_{E\cup R}(v)\le 2e(E)+e(E,R)\le 2C|E|+m|E|.$$
Hence, there exists some $v\in E$ such that $d_{E\cup R}(v)\le 2C+m$. Consider the graph $G'$ obtained by deleting all edges between $v$ and $E\cup R$, and adding any missing edges between $v$ and $L'$. The decrease in $x^TAx$ is at most $(2C+m)x_v\eta$, and the increase is at least $x_v\left(1-\frac{1}{4k^3}\right)$, hence $\lambda(G')>\lambda(G)$ by inequality (viii). We proceed by cases according to whether $\mathcal F$ is finite. This is necessary because, in Case 1 below, we need the fact that $|V(F_0)|$ is bounded.

\textit{Case 1: $\mathcal F$ is finite.} Since $\lambda(G')>\lambda(G)$, $G'\supseteq F_0$ for some $F_0\in\mathcal F$, and $F_0$ must use the vertex $v$. Since $|V(F_0)|\le m'<\infty$, there is some $u\in R-V(F_0)$, so by replacing $v$ with $u$ we find some $F_1\subseteq G$ which is isomorphic to $F_0$, a contradiction.

\textit{Case 2: $\mathcal F$ is infinite.} Let $E_1=E-v$, and note that $G'[E_1]$ is $\mathcal F$-free. Furthermore, any vertex in $E_1$ has neighbors only in $L'$, $E_1$, and $R$. Then similarly to the argument above we find $v_2\in E_1$ such that $d_{V-L'}(v_2)\le 2C+m$. Delete all edges between $v_2$ and $V-L'$ and add all edges between $v_2$ and $L'$ to obtain $G''$, and note that $x^TAx$ again increases, so that $\lambda(G'')>\lambda(G)$. Set $E_2=E_1-v_2$, and continue in this way until we obtain $E_i=\emptyset$. Since the resulting graph $G^{(i)}$ satisfies $\lambda(G^{(i)})>\lambda(G)$, $G^{(i)}$ is not $\mathcal F$-free. We now consider whether we are in case (a), (b), or (c) in the statement of Theorem \ref{Thm big ex spex}.

\textit{Subcase (a).} Since $G^{(i)}\supseteq K_{k,n-k}$ and is not $\mathcal F$-free, $G^{(i)}$ must have an edge either in $L'$ or $R$; but since $|R|\to\infty$, this implies that $G^{(i)}[L'\cup R]$ is not $\mathcal F$-free. Since no edges in $L'\cup R$ were modified to get from $G$ to $G^{(i)}$, this implies $G[L'\cup R]$ is not $\mathcal F$-free, a contradiction.

\textit{Subcase (b).} Note that $e(G^{(i)}[R])=e(G[R])\le {k\choose 2} - e(G[L'])\le{k\choose 2}$. Thus we can delete all edges in $E(G^{(i)}[R])$ and add the same number of edges in $L'$ to obtain a subgraph of $H$ (which is therefore $\mathcal F$-free). However, it is easy to see that this operation further increases $x^TAx$ so that the spectral radius of the resulting graph exceeds $\lambda(G)$, which is a contradiction.

\textit{Subcase (c).} Noting $e(G^{(i)}[R])\le {k\choose 2} - e(G[L']) + 1\le{k\choose 2}+1$, the same argument, where we now delete all but one edge in $E(G^{(i)}[R])$, gives the required contradiction.

\end{proof}

{We observe that our arguments for the case where $\mathcal F$ is infinite apply when $H=K_k+\overline{K_{n-k}}$ or $H=K_k+(K_2\cup K_{n-k-2})$, but not when $H=K_k+X$ where $e(X)\ge 2$. This is because, after deleting the edges in $R$ and adding the same number of edges in $L'$, there is more than one possible graph spanned by the remaining edges in $R$, hence we do not necessarily end up with a subgraph of $H$.}

\subsection{Eigenweight estimates} \label{subsection eigenweights}

We now know that there is a set $L'$ of $k$ vertices which are adjacent to every vertex in $R=V(G) \setminus L'$. Any other vertex may have at most $m-1$ neighbors in $V(G) \setminus L'$, otherwise $G$ contains a $K_{k+1, m}$ and is not $F$-free. In this subsection we give more refined estimates on the eigenvector entries.

\begin{lemma}\label{lemma: first eigenweight estimate} For $v\in R$ we have $x_v\le\frac{k}{\lambda - m}$.
\end{lemma}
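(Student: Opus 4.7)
The plan is to apply the eigenvector equation to an arbitrary vertex $v \in R$ and then take the maximum of both sides over $R$. Splitting the sum by $L'$ and $R$, we have
$$\lambda x_v = \sum_{u \in L',\, u \sim v} x_u + \sum_{u \in R,\, u \sim v} x_u.$$
Since $v \in R$ is adjacent to every vertex of $L'$ (by the definition of $R$ as the common neighborhood of $L'$) and each $x_u \le x_z = 1$, the first sum is bounded by $k$.

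For the second sum I will invoke the observation made in the paragraph immediately preceding the lemma: any vertex outside $L'$ has at most $m-1$ neighbors in $V(G)\setminus L'$, because $m$ such neighbors together with $\{v\}\cup L'$ would span a $K_{k+1,m}$, and $K_{k+1,m}\supseteq F$ since $F\subseteq K_{k+1,\infty}$ with $|V(F)|=m$. Hence the second sum has at most $m-1$ terms, each bounded by $M_R := \max_{u\in R} x_u$, giving $\lambda x_v \le k + (m-1)M_R$.

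Since this estimate is valid for every $v \in R$, I can take the maximum of the left-hand side over $v \in R$ to obtain
$$\lambda M_R \le k + (m-1)M_R,$$
which rearranges to $M_R \le k/(\lambda - m + 1) \le k/(\lambda - m)$, and therefore $x_v \le k/(\lambda - m)$ for every $v \in R$, as required.

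The argument is short and I do not anticipate any real obstacle; the only point to verify is that $\lambda > m$, so that the bound is meaningful and the division legitimate. This is immediate from Lemma \ref{initial lambda estimate}, since $\lambda \ge \sqrt{k(n-k)}\to\infty$ while $m = |V(F)|$ is a fixed constant depending only on $\mathcal F$.
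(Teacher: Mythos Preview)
Your proof is correct and follows essentially the same approach as the paper: apply the eigenvector equation, split the sum over $L'$ and $R$, bound the $L'$ contribution by $k$ and the $R$ contribution by the maximum eigenweight in $R$ times the degree bound, then solve for that maximum. The only cosmetic difference is that the paper takes $v=\arg\max_{u\in R}x_u$ from the start and uses the slightly looser bound $m x_v$ rather than $(m-1)M_R$, arriving directly at $x_v\le k/(\lambda-m)$.
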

\begin{proof} Let $v=\arg\max\{x_v:v\in R\}$. Then we have
$$\lambda x_v=\sum_{\substack{u\sim v\\u\in L}}x_u+\sum_{\substack{u\sim v\\u\in R}}x_u\le k+mx_v.$$
\end{proof}

\begin{lemma} \label{eigenweight upper bound} For $v\in R$ we have $x_v\le k\left(\frac{1}{\lambda}+\frac{d_R(v)}{\lambda(\lambda-m)}\right).$
\end{lemma}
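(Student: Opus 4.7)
The plan is to apply the eigenvector equation at $v$ and split the neighborhood of $v$ into its intersection with $L'$ and with $R$, estimating each piece separately. Since $v\in R$, the previous subsection tells us that $v$ is adjacent to every vertex of $L'$, and has at most $m-1$ other neighbors outside $L'$ (those lying in $R$, contributing $d_R(v)$ neighbors).

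First I would write
\[
\lambda x_v \;=\; \sum_{\substack{u\sim v\\ u\in L'}} x_u \;+\; \sum_{\substack{u\sim v\\ u\in R}} x_u.
\]
For the first sum, the recall the normalization $x_z=1$, so every eigenvector entry satisfies $x_u\le 1$; together with $|L'|=k$ this gives an upper bound of $k$. For the second sum, I would apply Lemma \ref{lemma: first eigenweight estimate}, which bounds $x_u\le k/(\lambda-m)$ uniformly for $u\in R$. Since there are exactly $d_R(v)$ such neighbors, the second sum is at most $k\,d_R(v)/(\lambda-m)$. Combining these two estimates and dividing by $\lambda$ yields
\[
x_v \;\le\; \frac{k}{\lambda} + \frac{k\, d_R(v)}{\lambda(\lambda-m)} \;=\; k\left(\frac{1}{\lambda}+\frac{d_R(v)}{\lambda(\lambda-m)}\right),
\]
which is the claimed inequality. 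There is no real obstacle here — the estimate is a one-step refinement of Lemma \ref{lemma: first eigenweight estimate}, where the previous lemma was a crude bound obtained by pessimistically estimating every $R$-neighbor's eigenweight by the maximum, whereas this lemma separates the $L'$-neighbors (which account for $k$ fully) from the $R$-neighbors (which are now weighted using Lemma \ref{lemma: first eigenweight estimate} applied inside the eigenvalue equation). The usefulness of this lemma will presumably come later: for vertices $v\in R$ with small $d_R(v)$, combined with $\lambda\ge\sqrt{k(n-k)}$ from Lemma \ref{initial lambda estimate}, this shows $x_v=O(n^{-1/2})$, i.e. most vertices in $R$ have tiny eigenweight.
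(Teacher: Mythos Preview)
Your proof is correct and matches the paper's own argument essentially line for line: the paper writes $\lambda x_v\le k+\sum_{w\in N_R(v)}x_w\le k+d_R(v)\cdot k/(\lambda-m)$ by Lemma~\ref{lemma: first eigenweight estimate}, then divides by $\lambda$. Your additional commentary about the purpose of the lemma is also accurate.
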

\begin{proof}
We have
$$\lambda x_v\le k+\sum_{w\in N_R(v)}x_w\le k+d_R(v)\frac{k}{\lambda-m}$$
by Lemma \ref{lemma: first eigenweight estimate}.
\end{proof}

\begin{lemma} \label{eigenweight lower bound 1} For $v\in R$, we have $x_v\ge\frac{k}{\lambda}-O(n^{-1}).$
\end{lemma}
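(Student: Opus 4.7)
The plan is as follows. Since $v\in R$ and every vertex of $L'$ is adjacent to every vertex of $R$ (by the definition of $R$), the first step is the trivial lower bound
$$\lambda x_v \;=\; \sum_{u\sim v} x_u \;\ge\; \sum_{u\in L'} x_u.$$
Therefore, to get $x_v\ge k/\lambda - O(n^{-1})$, it suffices to show $\sum_{u\in L'} x_u \ge k - O(\lambda/n) = k - O(n^{-1/2})$, since $\lambda=\Theta(\sqrt{n})$ by Lemma \ref{initial lambda estimate}. This is a genuine improvement over the bound $x_u\ge 1-1/(4k^3)$ from Lemma \ref{large-weight-vertex-degree}, which is only useful at the level of constants.

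The key observation is that for any $u\in L'$, the eigenvector equation has essentially the same shape as for $z$. Indeed, since both $u$ and $z$ lie in $L'$ they are both adjacent to all of $R$, so
$$\lambda x_u \;=\; \sum_{w\in R} x_w \;+\; \sum_{\substack{w\in L'\setminus\{u\}\\ w\sim u}} x_w, \qquad \lambda \;=\; \lambda x_z \;=\; \sum_{w\in R} x_w \;+\; \sum_{\substack{w\in L'\setminus\{z\}\\ w\sim z}} x_w.$$
Subtracting these identities, the bulk sum $\sum_{w\in R} x_w$ cancels, and what remains is a difference of two sums each containing at most $k-1$ terms, each of which is bounded by $1$. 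This yields
$$\lambda(1-x_u) \;\le\; k-1,$$
so $x_u \ge 1 - (k-1)/\lambda = 1 - O(n^{-1/2})$ for every $u\in L'$.

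Summing over $u\in L'$ gives $\sum_{u\in L'} x_u \ge k - O(n^{-1/2})$, and combining with the first display,
$$x_v \;\ge\; \frac{1}{\lambda}\bigl(k - O(n^{-1/2})\bigr) \;=\; \frac{k}{\lambda} - O\!\left(\frac{1}{\lambda\sqrt{n}}\right) \;=\; \frac{k}{\lambda} - O(n^{-1}),$$
as required. There is no serious obstacle here: the only ingredient beyond the eigenvector equations is the structural fact $V(G)=L'\cup R$ with $L'$ completely joined to $R$, which was established in Lemma \ref{lemma proof of common result}. The point worth emphasizing is simply that comparing the eigenvector equation at $u\in L'$ with that at $z$ is sharper than estimating $x_u$ directly, because the $O(n)$-sized contribution $\sum_{w\in R} x_w$ is identical for the two vertices.
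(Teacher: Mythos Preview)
Your proof is correct and follows essentially the same approach as the paper: both use the eigenvector equation at $z$ to show that every $u\in L'$ satisfies $x_u\ge 1-O(n^{-1/2})$, and then feed this into $\lambda x_v\ge\sum_{u\in L'}x_u$ for $v\in R$. The only cosmetic difference is that you subtract the equations for $u$ and $z$ directly, whereas the paper first isolates $\sum_{w\in R}x_w\ge\lambda-k$ from the equation for $z$ and then inserts this into the equation for $u$; the two manipulations are equivalent.
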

\begin{proof}
First we find a lower bound on eigenweights for vertices in $L'$. Note that
$$\lambda=\lambda x_z\le k+\sum_{v\in R}x_v\Longrightarrow \sum_{v\in R}x_v\ge \lambda-k.$$
Thus, for any $w\in L'$ we have
$$\lambda x_w\ge\sum_{v\in R}x_v\ge\lambda-k\Longrightarrow x_w\ge 1-O(n^{-1/2}).$$
Hence, for $v\in R$ we have
$$\lambda x_v\ge\sum_{w\in L'}x_w\ge k\left(1-O(n^{-1/2})\right)\Longrightarrow x_v\ge\frac{k}{\lambda}-O(n^{-1}).$$
\end{proof}

Combining the estimates of Lemma \ref{eigenweight upper bound} and Lemma \ref{eigenweight lower bound 1}, we see that for $v\in R$, $x_v=\frac{k}{\lambda}+\Theta(n^{-1})$.
\subsection{Proof of Theorem \ref{Thm big ex spex}} \label{subsection proof}
Note that Proposition \ref{common result} already proves Theorem \ref{Thm big ex spex} (a). Thus, let $G$ be the spectral extremal graph for any remaining case (b)-(f). For notational convenience, let $X=\overline{K_{n-k}}$ and $Q=0$ in case (b), and let $X=K_2\cup\overline{K_{n-k-2}}$ and $Q=0$ in case (c). This way, we can write $H=K_k+X$ in all cases. Note that, since we now know that $G\supseteq K_{k,n-k}$, we have $e(G)\le\mathrm{ex}^{K_{k,n-k}}(n,\mathcal F)=e(K_k+X)$.

\begin{lemma} \label{Lemma SPEX L' clique} The set $L'$ induces a clique.
\end{lemma}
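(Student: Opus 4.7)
The plan is to argue by contradiction: suppose $L'$ is not a clique, and let $u_1,u_2\in L'$ be non-adjacent. I will construct an $\mathcal F$-free graph $G^*$ with $\lambda(G^*)>\lambda(G)$, contradicting spectral extremality of $G$.

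The essential quantitative input is the eigenvector dichotomy established in Lemma \ref{large-weight-vertex-degree} and Subsection \ref{subsection eigenweights}: for $u\in L'$ one has $x_u = 1 - O(n^{-1/2})$, while for $v\in R$ one has $x_v = \Theta(n^{-1/2})$. Consequently, by the Rayleigh quotient
$$\lambda(G^*) - \lambda(G) \;\ge\; \frac{2}{x^\top x}\!\left(\sum_{e\in E(G^*)\setminus E(G)} x_{e_1}x_{e_2} - \sum_{e\in E(G)\setminus E(G^*)} x_{e_1}x_{e_2}\right),$$
each edge added inside $L'$ contributes roughly $2$ to the numerator, while each edge modification incident to $R$ contributes only $O(n^{-1})$. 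Meanwhile, from $G\supseteq K_{k,n-k}$ and $e(G)\le\mathrm{ex}^{K_{k,n-k}}(n,\mathcal F)=e(H)$, the edge budget yields $e(G[L'])+e(G[R])\le\binom{k}{2}+e(X)$. Writing $t:=\binom{k}{2}-e(G[L'])\ge 1$, we obtain $e(G[R])\le e(X)+t$.

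The construction of $G^*$ is to add all $t$ missing edges of $L'$ (so that $G^*[L']=K_k$) and to modify $G[R]$ as little as possible in order to keep $G^*$ $\mathcal F$-free. The details split by case. In \emph{cases (b), (c)}, where $e(X)=O(1)$ and hence $e(G[R])=O(1)$, I simply set $G^*:=H$; then $G^*$ is $\mathcal F$-free by hypothesis, and the $O(1)$ edge modifications in $R$ incur a spectral cost of $O(n^{-1})$, strictly dominated by the gain $\ge 2t(1-O(n^{-1/2}))^2$ from the $t$ edges added inside $L'$. Hence $\lambda(G^*)>\lambda(G)$, a contradiction. In \emph{cases (d), (e), (f)}, where $e(X)=\Theta(n)$, a wholesale replacement of $G[R]$ could incur spectral cost $\Theta(1)$, comparable to the $L'$-gain. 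The fix is to show that $G[R]$ is already close to a copy of $X$: applying the component-based Erd\H{o}s--Gallai analysis of Proposition \ref{matching argument 1} (and its analogues) to $G[R]$---which is $\mathcal F$-free with $e(G[R])\le e(X)+O(1)$---should yield that $G[R]$ differs from $X$ by only $O(1)$ edge modifications. Then $G^*$ can be built with only $O(1)$ edge changes in $R$, and the same spectral comparison goes through.

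The main obstacle is the structural claim in cases (d)-(f): even though $G$ is only spectrally (not edge-) extremal, we must show $G[R]$ differs from $X$ by only boundedly many edges. I expect this to follow from a preliminary spectral argument showing that $e(G[R])$ cannot be much smaller than $e(X)$---otherwise a simpler modification that merely increases the edge count in $R$ would already produce a spectral gain---combined with the Erd\H{o}s--Gallai-style component analysis applied to any $\mathcal F$-free graph on $n-k$ vertices of near-extremal density. Once this structural closeness is in place, the dichotomy in eigenweights closes the argument uniformly across all remaining cases.
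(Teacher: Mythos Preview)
Your setup and the treatment of cases (b), (c) match the paper's proof. However, your worry in cases (d)--(f) that ``a wholesale replacement of $G[R]$ could incur spectral cost $\Theta(1)$'' is unfounded, and this leads you to propose an unnecessary detour. The point you are missing is that Subsection~\ref{subsection eigenweights} gives not merely $x_v=\Theta(n^{-1/2})$ for $v\in R$, but the much sharper two-sided estimate $x_v=\frac{k}{\lambda}+O(n^{-1})$. Thus every edge in $R$ has weight $x_ux_v=(k/\lambda)^2+O(n^{-3/2})$, and when you delete at most $e(X)+\binom{k}{2}$ edges and add back $e(X)$ edges, the contributions cancel up to
\[
e(X)\Bigl(\tfrac{k}{\lambda}-O(n^{-1})\Bigr)^2-(e(X)+\tbinom{k}{2})\Bigl(\tfrac{k}{\lambda}+O(n^{-1})\Bigr)^2
\;\ge\; -e(X)\cdot O((\lambda n)^{-1})-\tbinom{k}{2}\cdot O(n^{-1})
\;=\;-O(n^{-1/2}),
\]
which is dominated by the gain $\ge(1-\tfrac{1}{4k^3})^2>\tfrac12$ from the single added edge in $L'$. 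This is exactly the paper's computation, and it handles all cases (b)--(f) uniformly with the single move $G\mapsto K_k+X$.

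Your proposed alternative for (d)--(f), namely first showing that $G[R]$ differs from $X$ by $O(1)$ edges, is not only unnecessary but also premature: this structural statement is precisely what the paper establishes \emph{after} Lemma~\ref{Lemma SPEX L' clique}, using the improved bound $x_v\ge k/\lambda$ that becomes available only once $L'$ is known to be a clique (see \eqref{eigenweight small terms refined} and the subsequent proofs). Without that refinement the bootstrap you sketch would itself need the wholesale-replacement calculation above to get started.
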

\begin{proof}
Suppose not. We transform $G$ into $K_k+X$ by deleting all edges in $R$ and then adding all edges in $X$ and in $G[L']$. Note $e(R)\le e(X) + \binom{k}{2}$, since $G\supseteq K_{k,n-k}$ and $H\in\mathrm{EX}^{K_{k,n-k}}(n,\mathcal F)$. At least one added edge was in $L'$ and thus increased $x^TAx$ by at least $\left(1-\frac{1}{4k^3}\right)^2>\frac{1}{2}$. Thus the net change in $x^TAx$ is at least
$$\frac{1}{2}+e(X)\left(\frac{k}{\lambda}-O(n^{-1})\right)^2-\left(e(X)+{k\choose 2}\right)\left(\frac{k}{\lambda}+O(n^{-1})\right)^2$$
$$
\ge\frac{1}{2}-e(X)\cdot O\left((\lambda n)^{-1}\right)-{k\choose 2}\left(\frac{k}{\lambda}+O(n^{-1})\right)^2>\frac{1}{2}-Q\cdot O(n^{-1/2})-{k\choose 2}\cdot O(n^{-1})>0$$
which is a contradiction.
\end{proof}

Lemma \ref{Lemma SPEX L' clique} now gives that all vertices in $L'$ are dominating and hence all have eigenvector entry equal to $1$. This means that the error term in Lemma \ref{eigenweight lower bound 1} can be removed, and 
\begin{equation}\label{eigenweight small terms refined}
    x_v\ge\frac{k}{\lambda}.
\end{equation}
Armed with this we may now prove the theorem. 

\begin{proof}[Proof Theorem \ref{Thm big ex spex} (e), (f)] We first only assume only (e). We already know that $G=K_k+Y$ for some $Y$. Now, we can delete all edges in $R$ and add the edges in $X$ to transform $G$ into $X$. The change in $x^TAx$ is at least
$$\begin{aligned}
-e(Y)\left(\frac{k}{\lambda}+O(n^{-1})\right)^2+e(X)\left(\frac{k}{\lambda}\right)^2&\ge(e(X)-e(Y))\frac{k^2}{\lambda^2}-e(Y)O(n^{-3/2}))\\
&\ge(e(X)-e(Y))\frac{k^2}{\lambda^2}-O(n)\cdot O(n^{-3/2}).
\end{aligned}$$
The last quantity above cannot be positive, and so $e(X)-e(Y)=O(n^{1/2})$, which proves the first claim. We already know that $\Delta(Y)$ is bounded (by $m$).

Now assume (f). We first improve bounds on $x_v$ for $v\in Y$. Note that all but a bounded number of $v\in R=V(Y)$ have $d_Y(v)\le d$. By the above, $e(X)-e(Y)=O(n^{1/2})$, so the number of vertices in $Y$ with degree less than $d$ is $O(n^{1/2})$. Call a vertex $v\in R$ \textit{bad} if $d_R(v)\ne d$ or $v\sim u$ and $d_R(u)\ne d$ for some $u\in Y$, and \textit{good} otherwise. Since $\Delta(R)$ is bounded, the number of bad vertices in $R$ is $O(n^{1/2})$. Now for any good vertex $v$,
$$\begin{aligned}
    \lambda^2 x_v&=\sum_{u\in L'}\lambda x_u+dk+\sum_{u\in N_R(v)}\sum_{w\in N_R(u)}x_w\\
    &\ge k\lambda+dk+d^2\frac{k}{\lambda}\\
    x_v&\ge \frac{k}{\lambda}+\frac{kd}{\lambda^2}+\frac{kd^2}{\lambda^3}.
\end{aligned}$$
On the other hand,
$$\begin{aligned}
\lambda^2 x_v&=\sum_{u\in L'}\lambda x_u+dk+\sum_{u\in N_R(v)}\sum_{w\in N_R(u)}x_w\\
&\le k\lambda +dk+d^2\frac{k}{\lambda-m}\\
x_v&\le\frac{k}{\lambda}+\frac{kd}{\lambda^2}+\frac{kd^2}{\lambda^2(\lambda-m)}.
\end{aligned}$$
Combining the two estimates one obtains that $x_v=\frac{k}{\lambda}+\frac{kd}{\lambda^2}+\frac{kd^2}{\lambda^3}+O(n^{-2})$. Now we delete all edges in $Y$ and add all edges in $X$. Note that at most $m\cdot O(n^{1/2}) = O(n^{1/2})$ edges of $Y$ are incident to a bad vertex. It follows that for some $N=O(n^{1/2})$ , there is a set of $N$ edges of $X$ and a set of $N$ edges of $Y$ which contain all edges of $X$ or $Y$, respectively, incident to a bad vertex. Then the change in $x^TAx$ is at least
$$\begin{aligned}
    &-(e(Y)-N)\left(\frac{k}{\lambda}+\frac{kd}{\lambda^2} +\frac{kd^2}{\lambda^3}+O(n^{-2})\right)^2+(e(X)-N)\left(\frac{k}{\lambda}+\frac{kd}{\lambda^2}  +\frac{kd^2}{\lambda^3}\right)^2\\
    &-N\left(\frac{k}{\lambda}+O(n^{-1})\right)^2+N\left(\frac{k}{\lambda}\right)^2\\
    &\ge (e(X)-e(Y))\frac{k^2}{\lambda^2}-e(Y)\frac{k}{\lambda}\cdot O(n^{-2}) -N\frac{k}{\lambda}\cdot O(n^{-1})\\
    &\ge(e(X)-e(Y))\cdot O(n^{-1})-Qn\cdot O(n^{-5/2})-O(n^{1/2})\cdot O(n^{-3/2}).
\end{aligned}$$
If $e(X)-e(Y)=\omega(1)$ then the quantity above is positive, a contradiction. Hence $e(X)-e(Y)=O(1)$, which implies that the number of bad vertices is in fact $O(1)$. Therefore we can take $N=O(1)$ and repeat the previous argument to show that when we transform $K_k+Y$ to $K_K+X$ the change in $x^TAx$ is at least
$$(e(X)-e(Y))\cdot O(n^{-1})-Qn\cdot O(n^{-5/2})-O(1)\cdot O(n^{-3/2})$$
which implies $e(X)-e(Y)=0$; hence $G\in\mathrm{EX}^{K_k+\overline{K_{n-k}}}(n,\mathcal F)$.
\end{proof}

\begin{proof}[Proof of Theorem \ref{Thm big ex spex} (b), (c), (d)] From Proposition \ref{matching argument 1}, we have $e(X)=Qn+O(1)$ for some $Q\in\{0,1/2,2/3\}$. The first case is $Q=0$, in other words $e(X)=O(1)$ and consequently $e(R)=O(1)$. If $G\not\in\mathrm{EX}^{K_k+\overline{K_{n-k}}}(n,\mathcal F)$, then delete all edges in $R$ and add all edges in $X$ to make the graph isomorphic to $K_k+X$. Let $s,t$ be the number of edges deleted and added, respectively, which satisfy $s<t=O(1)$. Then the change in $x^TAx$ is at least
$$-s\left(\frac{k}{\lambda}+O(n^{-1})\right)^2+t\left(\frac{k}{\lambda}\right)^2\ge(t-s)\frac{k^2}{\lambda^2}-O\left((\lambda n)^{-1}\right)-O(n^{-2})>0$$
which is a contradiction. Actually, this argument proves the following fact: whenever $K_k+X$ can be obtained from $K_k+Y$ by deleting a bounded number of edges and adding more edges than were deleted, we have $\lambda_1(K_k+X)>\lambda_1(K_k+Y)$.

The second case is $Q=1/2$. We first note that since $G$ is $\mathcal F$-saturated, only a bounded number of vertices $v\in R$ have $d_R(v)\ge 2$, by the same argument as Proposition \ref{matching argument 1}. On the other hand, $e(R)$ must be unbounded, or the same argument as in the case $Q=0$ would show $\lambda_1(G)<\lambda_1(K_k+X)$. Therefore, the argument in Proposition \ref{matching argument 1} shows that $G[R]$ has at most one isolated vertex. Thus $G[R]$ is obtained from a maximal union of $P_2$ by deleting and adding a bounded number of edges. Since the same is true of $K_k+X$, we can delete and add a bounded number of edges to transform $G$ into a graph isomorphic to $K_k+X$. By the fact mentioned above, this implies that $G\in\mathrm{EX}^{K_k+\overline{K_{n-k}}}(n,\mathcal F)$.

The final case $Q=2/3$ is similar. We find that $Y$ is the union of copies of $P_3$ and smaller graphs, plus a bounded number of graphs of bounded order. The number of copies of $P_3$ must be unbounded; otherwise, we have $e(Y)<\frac{2}{3}n$, and we may substitute $t-s=\Omega(n)$ in the argument above, and find
$$-s\left(\frac{k}{\lambda}+O(n^{-1})\right)^2+t\left(\frac{k}{\lambda}\right)^2\ge\Omega(n)\frac{k^2}{\lambda^2}-O(n)\cdot((\lambda n)^{-1})-O(n)\cdot O(n^{-2})>0$$
which gives a contradiction. The argument of Proposition \ref{matching argument 1} then shows that $G$ is obtained by deleting and adding a bounded number of edges to a maximal union of $P_3$. By the fact mentioned above, this implies that $G\in\mathrm{EX}^{K_k+\overline{K_{n-k}}}(n,\mathcal F)$.
\end{proof}

{Note that the conclusions stated in (b) and (c) follow because in these cases $\mathrm{EX}^{K_k+\overline{K_{k,n-k}}}(n,\mathcal F)$ contains a single graph up to isomorphism.}

\subsection{A counterexample} \label{subsection counterexample}

\begin{proof}[Proof of Proposition \ref{counterexample}]
Let $\mathcal F$ consist of the all the following graphs: 

\begin{enumerate}
    \item $3\cdot K_{1,4}$;
    \item $K_{2,6}\cup X_5\cup Y_5$, for all connected graphs $X_5,Y_5$ on 5 vertices;
    \item $K_{2,6}\cup K_{1,3}\cup X_5$, for all connected graphs $X_5$ on 5 vertices;
    \item $K_{2,6}\cup X_9$, for all connected graphs $X_9$ on 9 vertices;
    \item $K_{2,6}\cup X_{8}$, for all connected graphs $X_{8}$ on 8 vertices except for $P_8$;
    \item $X_5\cup Y_5\cup Z_5\cup W_5$, for all connected graphs $X_5,Y_5,Z_5,W_5$ on 5 vertices;
    \item $C_i \cup C_j \cup C_k$ for $3\leq i,j,k \leq 7$.
\end{enumerate}

We have chosen a very large family $\mathcal F$, which makes the extremal problem for $\mathcal F$ easy. We suspect that a similar counterexample could be obtained with an even smaller family $\mathcal F$.

\begin{lemma} \label{extremal graphs for counterexample} If $n\equiv 2\pmod 4$ is large enough, then we have that $\mathrm{EX}(n,\mathcal F)=K_2+\left(P_8\cup\frac{n-10}{4}\cdot P_4\right)$.
\end{lemma}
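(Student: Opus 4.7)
}
The claim splits into verifying that $G^* := K_2 + (P_8 \cup \tfrac{n-10}{4}\cdot P_4)$ is $\mathcal F$-free and that it uniquely achieves the maximum $e(G^*) = \tfrac{11n-14}{4}$.

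\emph{Verifying $\mathcal F$-freeness.} Let $v_1,v_2$ be the two dominating vertices of $G^*$ and $X$ its linear-forest remainder. Any $K_{2,6}\subseteq G^*$ must use $\{v_1,v_2\}$ as its partite set of size $2$ since $X$ has maximum degree $2$. Conditions (2)--(5) therefore reduce to structural properties of $X$: no $9$-vertex connected subgraph; the only $8$-vertex connected subgraph is $P_8$; no two disjoint connected $5$-vertex subgraphs; no $K_{1,3}$. All hold because the components of $X$ are paths with $P_8$ being the largest. For conditions (1), (6) and (7) one checks that every $K_{1,4}$, every connected $5$-vertex subgraph, or every cycle of length $3$--$7$ in $G^*$ uses at least one of $v_1,v_2$ (since $X$ is acyclic with max degree $2$); because there are only two such vertices, the number of vertex-disjoint copies is bounded by $2$, $3$, and $2$ respectively.

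\emph{Extremality.} Let $G$ be $\mathcal F$-free with $e(G)\ge\tfrac{11n-14}{4}$, and let $v_1,v_2$ be its two highest-degree vertices, $X=G-\{v_1,v_2\}$. The crude bound $e(G)\le d(v_1)+d(v_2)+e(X)\le 2(n-1)+e(X)$ forces $e(X)\ge\tfrac{3n-6}{4}$. The goal is to show $e(X)\le\tfrac{3n-2}{4}$, with the remaining unit accounted for by the edge $v_1v_2$. The plan has three steps.

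\emph{Step 1 (dominating pair).} A shifting argument shows that we may assume $v_1,v_2$ dominate $V(G)$: if some $u$ is non-adjacent to $v_1$, we remove an edge between $u$ and $V\setminus\{v_1,v_2\}$ and add $uv_1$; this preserves $\mathcal F$-freeness by comparison with the role already played by the highest-degree vertex $v_1$ in the relevant forbidden configurations, and does not decrease $e(G)$. Iterating we obtain $G=K_2+X$. \emph{Step 2 (structure of $X$).} Since $v_1,v_2$ are dominating, $G$ contains $K_{2,6}$ with $v_1,v_2$ and any six other vertices. Varying the choice of six and invoking conditions (4), (5), (2), (3) respectively yields: every component of $X$ has at most $8$ vertices; any $8$-vertex component equals $P_8$; at most one component has size $\ge 5$; and no other component is $K_{1,3}$ if such a large component is present. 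Condition (7), applied with two short cycles through $v_1$ and through $v_2$ obtained from any pair of edges in $X$, forbids a short cycle in $X$; combined with the component-size cap this makes $X$ a forest.

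\emph{Step 3 (optimization and main obstacle).} Among linear forests on $n-2$ vertices satisfying the above, the edge count is maximized by combining one $P_8$ (density $7/8$) with as many $P_4$s (density $3/4$) as possible, since every other allowed component ($P_1,P_2,P_3$, or trees of order $5$--$7$) has strictly smaller density and cannot compensate for replacing $P_8$ or $P_4$. For $n\equiv 2\pmod 4$ we have $n-10\equiv 0\pmod 4$, so $(n-10)/4$ copies of $P_4$ fit exactly, giving $e(X)=7+3(n-10)/4=(3n-2)/4$ and $e(G)=1+2(n-2)+(3n-2)/4=(11n-14)/4$, with equality forcing $G\cong G^*$. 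The main technical obstacle is Step 1: the shifting operation must be verified, family by family in $\mathcal F$, not to create a forbidden copy; alternatively one can avoid shifting via a direct bootstrap, using $e(G)=\Theta(n)$ together with the $K_{2,6}$-based conditions to force a near-complete common neighborhood $|N(v_1)\cap N(v_2)|=n-O(1)$ and then absorbing the $O(1)$ residual vertices into $\{v_1,v_2\}$'s neighborhood without decreasing edges.
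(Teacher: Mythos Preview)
Your outline is broadly correct for the $\mathcal F$-freeness check and for Steps~2--3, but Step~1 is not merely a technical obstacle: as written it is a genuine gap, and neither of your two suggested fixes works. The shifting move ``delete $uw$, add $uv_1$'' cannot be justified by a generic ``$v_1$ already plays this role'' remark, because adding $uv_1$ can easily manufacture a new $K_{2,6}$ (once $N(v_1)\cap N(v_2)$ grows) and thereby activate conditions (2)--(5) that were previously vacuous; you would then need to know the structure of $X$ in advance to check no forbidden graph appears, which is circular. Your alternative ``bootstrap via the $K_{2,6}$-based conditions'' has the same circularity: conditions (2)--(5) say nothing until you already have a $K_{2,6}$, i.e.\ until you already know $|N(v_1)\cap N(v_2)|\ge 6$, which is exactly what you are trying to establish.

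The paper avoids this entirely by using the three conditions that do \emph{not} involve $K_{2,6}$ --- namely (1), (6), (7) --- to get the initial structure. Forbidding $3\cdot K_{1,4}$ gives two vertices $x,y$ outside of which every other vertex has bounded degree (at most~$11$); then (6) and (7) bound both the number and the order of large or cyclic components of $G-\{x,y\}$, so that $G-\{x,y\}$ consists of a bounded ``bad'' part $B$ plus a union of trees of order $\le 4$. A straight edge count then forces $d(x)+d(y)\ge 2n-O(1)$, hence $|N(x)\cap N(y)|\ge n-O(1)$. Only at this point do the $K_{2,6}$-based conditions (2)--(5) enter, and the argument finishes along the lines of your Steps~2--3 (with the refinement that one first upgrades the degree bound outside $\{x,y\}$ from $11$ down to $3$ by exhibiting disjoint $K_{1,4}$'s at $x$ and $y$). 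In short: the missing idea is that conditions (1), (6), (7) are precisely what replace your Step~1.
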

\begin{proof}
One can check that $H=K_2+\left(P_8\cup\frac{n-10}{4}\cdot P_4\right)$ is $\mathcal F$-free. Now let $H'$ be the extremal graph, so that
$$e(H')\ge 2(n-2)+1+7+\frac{n-10}{4}\cdot 3=\frac{11}{4}n-4+1+7-\frac{15}{2}=\frac{11}{4}n-\frac{7}{2}.$$
By forbidding (1), we may choose vertices $x,y$ such that any other vertex $z$ has at most $11$ neighbors in $V(G)\setminus \{x,y\}$. Consider the components of $G-\{x,y\}$; these are either graphs on 3 or 4 vertices containing a $C_3$ or $C_4$, respectively (and by (7) there are at most 2 of each type), or connected graph of order at least $5$ (and by (6) there are at most 3 of these), or trees on at most $4$ vertices. We claim that the order of any graphs of the second type is at most $5\cdot 11^9$. Let $K$ be such a component with $|V(K)|\ge 5\cdot 11^9$; let $B_r(v)$ denote $\{u\in V(K):d_K(u,v)\le r\}$. Then for any $v$, $|B_8(v)|\le 1+11+\cdots+11^8\le 11^9$ since $\Delta(K)\le 11$, and consequently we can find vertices $v_1,\ldots,v_5$ such that for $i\ne j$, $d(v_i,v_j)\ge 9$. This implies that $B_4(v_i)$, $1\le i\le 5$, induce 5 disjoint connected graphs on at least 5 vertices, which contradicts (6). Putting this all together, we find a partition $V(H')=U\sqcup B$ where $U$ is a union of trees of order at most 4, $|B|\le 2+2\cdot 3+2\cdot 4+3\cdot 5\cdot 11^9\le 11^{11}$, and $x,y\in B$.
Hence, since $\Delta(G[B-x-y])\le 11$, we have
$$\frac{11}{4}n-\frac{7}{2}\le d(x)+d(y)+11^{11}\cdot 11+\frac{n}{4}\cdot 3\Longrightarrow d(x)+d(y)\ge 2n-\frac{7}{2}-11^{12}\ge 2n-11^{13}.$$
So $d(x)\ge n-11^{13}$, $d(y)\ge n-11^{13}$, and $|N(x)\cap N(y)|\ge n-2\cdot 11^{13}$. Note that $x$ and $y$ are contained in disjoint copies of $K_{1,4}$ so by (1), every other vertex of $H'$ has at most 3 neighbors outside $\{x,y\}$. Now we have
$$e(N(x)\cap N(y))\ge\frac{11}{4}n-\frac{7}{2}-2(n-1)-2\cdot 11^{13}\cdot 5=\frac{3}{4}n-O(1)$$
which implies we can find 2 disjoint edges in $N(x)\cap N(y)$. Hence $x$ and $y$ are contained in disjoint copies of $C_3$. Thus by (5) and (7), all components of $G-x-y$, except possibly one (which by (4) can be of order at most $8$), are trees of order at most 4. Such a graph cannot have more edges than $H$. If $e(H')=e(H)$, then $G-x-y=a\cdot P_4\cup b\cdot K_{1,3}\cup K$, where $K$ is a connected graph on 8 vertices (using (4)). By (5), we have $K=P_8$; but then by (3), $b=0$. It follows that $H'\subseteq H$, hence $H'\simeq H$.
\end{proof}

As in the proof above, let $H=K_2+\left(P_8\cup\frac{n-10}{4}\cdot P_4\right)$. Let $G=K_2+\frac{n-2}{4}\cdot K_{1,3}$. One can check that $G$ is also $\mathcal F$-free. We now show that $\lambda_1(G)>\lambda_1(H)$. Note that $G$ and $H$ have equitable partitions with quotient matrices
$$B_G=
\begin{bmatrix}
    1&\frac{1}{4}(n-2)&\frac{3}{4}(n-2)\\
    2&0&3\\
    2&1&0\\
\end{bmatrix},\ 
B_H=
\begin{bmatrix}
    1&\frac{1}{2}(n-10)&\frac{1}{2}(n-10)&2&2&2&2\\
    2&1&1&0&0&0&0\\
    2&1&0&0&0&0&0\\
    2&0&0&1&1&0&0\\
    2&0&0&1&0&1&0\\
    2&0&0&0&1&0&1\\
    2&0&0&0&0&1&0
\end{bmatrix}.$$
The characteristic polynomials are
$$\begin{aligned}
    p_G(x)&=x^3-x^2+(1-2n)x+9-3n\\
    p_H(x)&=x^7-3x^6+(3-2n)x^5+(3+n)x^4+(7n-22)x^3+(1-n)x^2+(10-4n)x+3-n.
\end{aligned}$$
By the Weyl inequalities, $\lambda_1(G),\lambda_1(H)\ge\sqrt{2(n-2)}$ and $\lambda_2(G),\lambda_2(H)<\sqrt{2(n-2)}$. Thus $p_G$ is negative on $[\sqrt{2(n-2)},\lambda_1(G))$ and positive on $(\lambda_1(G),\infty)$, and $p_H$ is negative on $[\sqrt{2(n-2)},\lambda_1(H))$ and positive on $(\lambda_1(H),\infty)$. So, to show $\lambda_1(G)>\lambda_1(H)$ it suffices to find some $x>\sqrt{2(n-2)}$ such that $p_G(x)<0<p_H(x)$. For a constant $C>0$, we compute
$$\begin{aligned}
    p_G(\sqrt{2(n-2)}+5/4+Cn^{-1/2})&=4C\sqrt n-\frac{13}{8\sqrt 2}\sqrt{n-2}+o(\sqrt n)\\
    p_H(\sqrt{2(n-2)}+5/4+Cn^{-1/2})&=16Cn^{5/2}-\frac{5}{2\sqrt 2}n^2\sqrt{n-2}+o(n^{5/2})
\end{aligned}$$
as $n\to\infty$. Choosing $\frac{13}{32\sqrt 2}<C<\frac{5}{32\sqrt 2}$, we find that $p_G(\sqrt{2(n-2)}+5/4+Cn^{-1/2})\to-\infty$ while $p_H(\sqrt{2(n-2)}+5/4+Cn^{-1/2})\to\infty$.
\end{proof}

\subsection{Forbidden trees} \label{subsection forbidden}

\begin{proof}[Proof of Proposition \ref{counting trees}]
We will assume that all labelled trees have vertex set $[n]$ and let $T$ denote a uniformly random labelled tree on $[n]$. For $i\ne j$ in $[n]$, let $A_{ij}$ be the event that $i\sim j$, $i$ has a neighbor different from $j$ which is adjacent precisely to $i$ and 2 leaves, and $j$ has a neighbor different from $i$ which is adjacent precisely to $j$ and 2 leaves; let $X_{ij}$ be the indicator function of $A_{ij}$. Observe that if $A_{ij}$ occurs then $T$ is of the form described in the application `Almost all trees' of Theorem \ref{Thm big ex spex}, for some $k\ge 4$. Therefore, defining $X=\sum_{i<j} X_{ij}$, it suffices to show that $\mathbb P[X=0]\to 0$. Our argument is a variation on \cite{ER} in which it was shown that almost all labelled trees have a vertex with two leaves.

For $i,j,k,\ell\in[n]$ (not necessarily distinct), we define $N_n(ij)$ to be the number of labelled trees on $[n]$ which contain the edge $ij$, and we define $N_n(ij,k\ell)$ to be the number of labelled trees on $[n]$ which contain both edges $ij$ and $k\ell$. From Lemma 6 of \cite{LMS}, we have the following formulas.

\begin{lemma} \label{tree completion} For any $i,j,k,\ell$ distinct we have:
\begin{itemize}
    \item[(a)] $N_n(ij)=2n^{n-3}$
    \item[(b)] $N_n(ij,ik)=3n^{n-4}$
    \item[(c)] $N_n(ij,k\ell)=4n^{n-4}$.
\end{itemize}
\end{lemma}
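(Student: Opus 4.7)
The strategy is to derive all three identities as special cases of a single more general count: if $F$ is a spanning forest of $K_n$ whose components have orders $a_1,\dots,a_k$ with $a_1+\cdots+a_k=n$, then the number of labelled spanning trees of $K_n$ containing $F$ equals $n^{k-2}\prod_{i=1}^k a_i$. Granted this identity, parts (a)--(c) follow by inspection of the component-size profiles: in (a) the sizes are $(2,1,1,\dots,1)$ with $k=n-1$, giving $2\cdot n^{n-3}$; in (b) they are $(3,1,\dots,1)$ with $k=n-2$, giving $3\cdot n^{n-4}$; in (c) they are $(2,2,1,\dots,1)$ with $k=n-2$, giving $2\cdot 2\cdot n^{n-4}=4n^{n-4}$. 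So the only real work is proving the general identity.

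To prove the identity, I would contract each component $C_i$ of $F$ to a single ``super-vertex'' and form the multigraph $\tilde K$ on the $k$ super-vertices in which $C_i$ and $C_j$ are joined by exactly $a_i a_j$ parallel edges (one for each cross-pair of original vertices, one from each component). Spanning trees $T\supseteq F$ of $K_n$ are in bijection with spanning trees of $\tilde K$: an edge $e\in T\setminus F$ projects to a specific parallel copy, and conversely each spanning tree of $\tilde K$ lifts uniquely to a tree $T\supseteq F$ since the parallel edges remember their endpoints. Therefore
\[
\tau(\tilde K)\;=\;\sum_{T\text{ tree on }[k]}\;\prod_{ij\in E(T)} a_i a_j\;=\;\sum_{T}\prod_{i=1}^{k} a_i^{d_T(i)}.
\]

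Using the Prüfer bijection $T\leftrightarrow \pi\in[k]^{k-2}$, where $d_T(i)=1+|\{j:\pi_j=i\}|$, the sum above factors cleanly:
\[
\tau(\tilde K)\;=\;\Bigl(\prod_{i=1}^k a_i\Bigr)\sum_{\pi\in[k]^{k-2}}\prod_{j=1}^{k-2} a_{\pi_j}\;=\;\Bigl(\prod_{i=1}^k a_i\Bigr)(a_1+\cdots+a_k)^{k-2}\;=\;n^{k-2}\prod_{i=1}^{k} a_i,
\]
where the middle equality is the elementary identity $\sum_{\pi}\prod_j a_{\pi_j}=\prod_j\sum_{\pi_j=1}^{k}a_{\pi_j}$. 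Combined with the contraction bijection this yields the claimed formula.

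The main obstacle, such as it is, lies in setting up the contraction bijection precisely: one must verify that every spanning tree $T\supseteq F$ determines a unique spanning tree of $\tilde K$ and, conversely, that each spanning tree of $\tilde K$ (viewed together with its selected parallel copies) has a unique preimage in $K_n$. Once this is nailed down, the Prüfer-sequence step is entirely explicit. An alternative, equally viable route is to compute $\tau(\tilde K)$ directly by the matrix-tree theorem: the reduced Laplacian of $\tilde K$ has a block structure that makes its determinant evaluate to $n^{k-2}\prod_i a_i$, but this requires a short determinant calculation, whereas the Prüfer-with-weights argument needs only the multinomial identity above.
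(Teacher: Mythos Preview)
Your proof is correct. The paper does not actually prove this lemma at all; it simply quotes it as Lemma~6 of another paper. Your argument supplies a self-contained proof via the generalized Cayley/Moon formula --- the number of labelled trees on $[n]$ containing a given spanning forest with component sizes $a_1,\dots,a_k$ is $n^{k-2}\prod_i a_i$ --- and your contraction-plus-Pr\"ufer derivation of that formula is standard and clean. The three specializations to (a)--(c) are then immediate, exactly as you state them. The citation buys brevity; your route buys self-containment and makes transparent that all three parts are instances of a single identity.
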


Next we estimate $\mathbb E[X]$ and $\mathbb E[X^2]$. To specify a tree $T$ such that $A_{ij}$ occurs, we choose the neighbor of $i$ and its pair of leaves, then we choose the neighbor of $j$ and its pair of leaves, and then attach a tree containing the edge $ij$ and the unused vertices in $[n]$. For a constant $c$ we have that $(n-c)^{n-c} = n^{n-c}e^{-c} + \Theta(n^{n-c-2})$. This gives:
$$\begin{aligned}
    \mathbb P[A_{ij}]&=\frac{1}{n^{n-2}}(n-2){n-3\choose 2}(n-5){n-6\choose 2}N_{n-6}(ij)=\frac{1}{2e^6n}+o(n^{-1})\\
    \mathbb E[X]&={n\choose 2}\left(\frac{1}{2e^6n}+o(n^{-1})\right)=\frac{n}{4e^6}+o(n)\\
    \mathbb E[X]^2&=\frac{n^2}{16e^{12}}+o(n^2).
\end{aligned}$$
Next we consider $A_{ij}\cap A_{ik}$. {There are 2 types of trees which realize this event: (i) those in which the special neighbor of $i$ in $A_{ij}$ is the same as the special neighbor of $i$ in $A_{ik}$ (the special neighbor of $j$ and $k$ must be distinct in a tree), and (ii) those in which they are different. So we have
$$\begin{aligned}
  \mathbb P[A_{ij}\cap A_{ik}]&\le\frac{n^{9}3n^{n-13}+n^{12}3n^{n-16}}{n^{n-2}}=O(n^{-2}).
\end{aligned}$$
}
Finally, if $i,j,k,\ell$ are distinct, then we have
$$\begin{aligned}\mathbb P[A_{ij}\cap A_{k\ell}]&\le \frac{n^4{n\choose 2}^44(n-12)^{n-16}}{n^{n-2}}=\frac{1}{4e^{12}n^2}+o(n^{-2}).
\end{aligned}$$
Therefore, we have
$$\begin{aligned}
\mathbb E[X^2]&=\sum_{i<j}\mathbb P[A_{ij}]+\sum_{i,j,k}\mathbb P[A_{ij}\cap A_{ik}]+\sum_{\{i,j\},\{k,\ell\}\text{ disjoint}}\mathbb P[A_{ij}\cap A_{k\ell}]\\
&\le {n\choose 2}\frac{1}{e^6n}+n^3\cdot O(n^{-2})+{n\choose 2}{n-2\choose 2}\left(\frac{1}{4e^{12}n^2}+o(n^{-2})\right)\\
&=\frac{n^2}{16e^{12}}+o(n^2).
\end{aligned}$$
Therefore $\mathrm{Var}(X)=o(n^2)$, and Chebyshev's inequality gives
$$\mathbb P[X=0]\le\mathbb P\left[|X-\mathbb E[X]|\ge\frac{n}{5e^6}\right]\le\frac{25e^{12}\cdot o(n^2)}{n^2}\to 0.$$
\end{proof}

\section{Alpha spectral extremal results} \label{Section SPEX ALPHA SPEX}

Let $\mathcal F$ be the family of graphs in Theorem \ref{thm spex to alpha}. We first consider the case where $\mathcal F$ contains some bipartite graph $F$ with a vertex $v$ such that $F-v$ is a forest. 

\begin{lemma}
\label{Forbidden forest}
    There exists $c=c(F)$ such that, for any graph $G$ with $V(G)=U\sqcup W$ and
    \[2e(U) + e(U,W) > 3c |U| + c |W|,\]
    there exists a copy of $F-v$ in $G$ such that $N_F(v)$ embeds in $U$.
\end{lemma}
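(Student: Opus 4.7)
The plan is to set $c=c(F) := |V(F-v)|$ and perform a dichotomy on the hypothesis. Observe that if both $e(U) \le c|U|$ and $e(U,W) < c(|U|+|W|)$, then
\[
2e(U) + e(U,W) \le 2c|U| + c|U| + c|W| = 3c|U| + c|W|,
\]
contradicting the assumption. So the hypothesis forces either (A) $e(U) > c|U|$, or (B) $e(U,W) \ge c(|U|+|W|)$.

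In Case (A), the induced subgraph $G[U]$ has average degree strictly greater than $2c = 2|V(F-v)|$. By the classical argument of iteratively deleting vertices of small degree, $G[U]$ contains a subgraph $H$ with $\delta(H) \ge |V(F-v)|$. Since $F-v$ is a forest, I embed it into $H$ greedily, processing each tree component by BFS: when extending the embedding to a new vertex, its parent has been placed, has $\ge |V(F-v)|$ neighbors in $H$, and at most $|V(F-v)|-1$ vertices have been used, so an unused neighbor is available. Because $V(H) \subseteq U$, every vertex of $F-v$ (in particular $N_F(v)$) lands in $U$.

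In Case (B), I iteratively remove from the bipartite graph $G[U,W]$ any vertex with fewer than $c$ neighbors on the opposite side; each such removal deletes fewer than $c$ bipartite edges, so the process terminates with a nonempty subgraph on $U'\sqcup W'$, with $U'\subseteq U$, $W'\subseteq W$, in which every vertex has at least $c=|V(F-v)|$ neighbors on the opposite side. Here I use the hypothesis that $F$ is bipartite: let $A\sqcup B$ be the bipartition of $F$ with $v\in A$, so $N_F(v)\subseteq B$, and $F-v$ inherits a bipartition into $B$ and $A-v$. I embed $F-v$ into $(U',W')$ respecting the bipartition, placing $B\to U'$ and $A-v\to W'$, again by greedy BFS on each tree component: every placed parent has $\ge |V(F-v)|$ neighbors on the opposite side, of which fewer than $|V(F-v)|$ are used. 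Then $N_F(v)\subseteq B$ embeds into $U'\subseteq U$, as required.

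The main subtlety is recognizing the dichotomy: the asymmetric weighting $3c|U|+c|W|$ on the right is chosen precisely so that a single constant $c$ can play two roles simultaneously—a bound on the average degree inside $U$ in Case (A), and on the bipartite minimum degree between $U$ and $W$ in Case (B). Once this is seen, both subcases reduce to standard greedy tree embedding, which is available exactly because $F-v$ is a forest (and, for Case (B), because $F$ is bipartite with $v$ on one side, forcing $N_F(v)$ to lie entirely in the opposite color class).
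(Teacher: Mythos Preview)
Your proof is correct and follows the same high-level dichotomy as the paper, but the two cases are handled somewhat differently. In Case~(A) both arguments amount to the fact that a graph with more than $(|V(F-v)|-1)|U|$ edges contains the forest $F-v$; the paper simply cites the bound $\mathrm{ex}(n,T)\le(|V(T)|-2)n$ for forests, while you pass to a subgraph of large minimum degree and embed greedily, which is essentially the standard proof of that bound. In Case~(B) the approaches diverge more. The paper builds an auxiliary forest $T$ from two copies of $F-v$, joined by a matching between designated $A$-vertices of corresponding components, and applies the extremal bound for $T$ to $G[U,W]$; the point of the doubling is that in any bipartite embedding of $T$, for each component of $F-v$ one of the two copies has its $A$-part landing in $U$, and piecing these together gives the desired copy. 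You instead pass to a bipartite subgraph of $G[U,W]$ with minimum degree at least $c$ on each side and embed $F-v$ greedily with the prescribed orientation $B\to U'$, $A\setminus\{v\}\to W'$. Your route avoids the doubling construction and yields the better constant $c=|V(F-v)|$ rather than the paper's $c=2|V(F-v)|$; the paper's route has the minor advantage of reducing both cases to a single black-box extremal statement.
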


\begin{proof} Let $v$ be the vertex such that $F-v$ is a forest, and let $\Delta=\Delta(F-v)$. Let $V(F)=A\sqcup B$ be a proper coloring with $v\in B$. Then $F-v$ has a proper coloring $A\sqcup(B\setminus \{v\})$, and $A$ contains all the neighbors of $v$ in $F$.  Set $a=|A|$ and $b=|B|$.

Let $T$ be a forest obtained from 2 copies of $F-v$ by selecting a vertex from $A$ from each component (observe that every component of $F-v$ must contain a vertex in $A$) and making it adjacent to the corresponding vertex in the other copy of $F-v$. Now if $t=|V(T)|=2(|V(F)|-1)$ then it is known that $\mathrm{ex}(n,T)\le (t-2)n$.

Now consider the graph $G$ with $V(G)=U\sqcup W$, appearing in the statement of this lemma. If $e(U)>(t/2-2)|U|$ then $G[U]\supseteq F-v$ and we are done. If $e(U,W)>(t-2)(|U|+|W|)$ then $G[U,W]\supseteq T$, and by the design of $T$ this means there is a copy of $F-v$ in $G$ in which $A$ embeds in $U$. Hence, to guarantee the conclusion of the lemma, it suffices that $e(U)>(t-2)|U|$ or $e(U,W)>(t-2)(|U|+|W|)$, and we may take e.g. $c=t$.
\end{proof}

If $G$ is any $F$-free graph, then for any $u\in V(G)$, there cannot be a copy of $F-v$ in $G[N_1(u)\cup N_2(u)]$ such that all the neighbors of $N_F(v)$ embed in $N_1(u)$. Applying Lemma \ref{Forbidden forest} gives

\begin{equation}
\label{upper bound on edges in N_1, <N_1, N_2>}
2e(N_1(v)) + e(N_1(v), N_2(v)) \le 3c d(v) + c (n - d(v) - 1) = 2cd(v) + c(n-1) \le 3cn.
\end{equation}

If $\mathcal F$ does not contain a graph $F$ so that $F-v$ is a forest, then we assume that $\mathrm{ex}(n,\mathcal F)=O(n)$. By choosing $c$ large enough in Equation \ref{upper bound on edges in N_1, <N_1, N_2>}, we may assume that the equation holds in either case, for some $c=c(\mathcal F)$, for any $\mathcal F$-free graph.

\begin{lemma}
(\cite{nikiforov2017merging})
\label{bounds for alpha spectral radius of any graph}
    Let $G$ be a graph and $0 \le \alpha \le 1$, then 
    \[\alpha \Delta(G) \le \lambda_{\alpha}(G) \le \alpha\Delta(G) + (1 - \alpha)\lambda(G).\]
\end{lemma}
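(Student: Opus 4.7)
The plan is to prove the two inequalities separately, both via standard variational/eigenvalue tools, since the statement reduces to basic linear-algebraic facts about the matrix decomposition $A_\alpha = \alpha D + (1-\alpha) A$.

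For the lower bound $\alpha\Delta(G) \le \lambda_\alpha(G)$, I would use the Rayleigh quotient characterization
\[
\lambda_\alpha(G) = \max_{\|x\|=1} x^T A_\alpha x.
\]
Let $v$ be a vertex achieving $d(v) = \Delta(G)$, and take $x = e_v$, the standard basis vector at $v$. Then $e_v^T A e_v = 0$ since $A$ has zero diagonal, while $e_v^T D e_v = d(v) = \Delta(G)$. Hence $e_v^T A_\alpha e_v = \alpha \Delta(G)$, giving the desired lower bound.

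For the upper bound $\lambda_\alpha(G) \le \alpha \Delta(G) + (1-\alpha)\lambda(G)$, I would apply the Weyl subadditivity inequality for the largest eigenvalue of sums of symmetric matrices: for symmetric $M_1, M_2$, $\lambda_1(M_1 + M_2) \le \lambda_1(M_1) + \lambda_1(M_2)$. Writing $A_\alpha = (\alpha D) + ((1-\alpha) A)$, both summands are symmetric (with $\alpha, 1-\alpha \ge 0$), so
\[
\lambda_\alpha(G) \le \lambda_1(\alpha D) + \lambda_1((1-\alpha) A) = \alpha \lambda_1(D) + (1-\alpha) \lambda(G) = \alpha \Delta(G) + (1-\alpha)\lambda(G),
\]
where in the last step we used that $D$ is diagonal with entries $d(v)$, so $\lambda_1(D) = \Delta(G)$.

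There is essentially no obstacle here: the only thing worth checking is that Weyl's inequality applies cleanly because $\alpha \ge 0$ and $1-\alpha \ge 0$ preserve the positive-homogeneity of $\lambda_1$ on symmetric matrices. The result appears as Proposition~15 of Nikiforov's paper \cite{nikiforov2017merging}, and the above two-line argument is exactly the one given there.
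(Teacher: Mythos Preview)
Your proof is correct. The paper does not actually prove this lemma; it simply cites it from \cite{nikiforov2017merging}, so there is no ``paper's own proof'' to compare against. Your Rayleigh-quotient argument for the lower bound and Weyl-inequality argument for the upper bound are exactly the standard proof (and, as you note, the one Nikiforov gives).
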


\begin{lemma} \label{upper bound for lambda(G)} For any $\mathcal F$-free graph $G$ we have $\lambda(G)\le\sqrt{k(n-k)}+k$.
\end{lemma}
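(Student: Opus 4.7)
The plan is to reduce the problem to bounding $\lambda(H)$ directly: since $H \in \mathrm{SPEX}(n, \mathcal F)$ means precisely that $H$ achieves the maximum spectral radius over $\mathcal F$-free graphs on $n$ vertices, every $\mathcal F$-free $G$ automatically satisfies $\lambda(G) \le \lambda(H)$. The task thus reduces to a deterministic matrix calculation on the two explicit candidates for $H$.

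Both candidates admit a small equitable partition — two parts in case (a) and three parts in case (b) — so $\lambda(H)$ equals the spectral radius of a $2\times2$ or $3\times3$ quotient matrix. In case (a), the partition into the $K_k$ and the independent set of size $n-k$ yields the quadratic $\lambda^2 - (k-1)\lambda - k(n-k) = 0$. Solving this and then applying the elementary inequality $\sqrt{a^2+b^2}\le a+b$ to the discriminant, with $a=k-1$ and $b=2\sqrt{k(n-k)}$, immediately gives $\lambda(H)\le k-1+\sqrt{k(n-k)} < k+\sqrt{k(n-k)}$.

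For case (b), the analogous three-part partition into $K_k$, $K_2$, and the remaining independent set produces a cubic characteristic polynomial for $\lambda(H)$. The idea is to divide through by $\lambda$ and drop the strictly negative lower-order constant term, obtaining the quadratic inequality $\lambda^2 - k\lambda - [k(n-k-1)+1] < 0$. Its discriminant simplifies to $(k-2)^2 + 4k(n-k)$, so the same square-root inequality applied with $a=|k-2|$ and $b=2\sqrt{k(n-k)}$ yields once again the same upper bound $k-1+\sqrt{k(n-k)}$.

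I do not expect any serious obstacle; the proof is essentially a pair of elementary quadratic manipulations after invoking the extremality of $H$. The only care needed is keeping signs straight while reducing the cubic in case (b) and verifying that the edge case $k=1$ still satisfies the desired bound (which it does, since the resulting estimate becomes $1+\sqrt{n-1} = k+\sqrt{k(n-k)}$).
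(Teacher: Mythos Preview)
Your proof is correct, but it takes a different and considerably longer route than the paper. The paper simply notes that $\lambda(G)\le\mathrm{spex}(n,\mathcal F)\le\lambda(K_k+(K_2\cup\overline{K_{n-k-2}}))$ and then applies Weyl's inequality: writing $A(K_k+(K_2\cup\overline{K_{n-k-2}}))=A(K_{k,n-k})+B$, where $B$ is the adjacency matrix of the disjoint union $K_k\cup K_2$ padded with zeros, one has $\lambda_1(B)=\max\{k-1,1\}\le k$ and hence $\lambda(H)\le\sqrt{k(n-k)}+k$ in one line. There is no need to treat cases (a) and (b) separately, since the case-(b) graph contains the case-(a) graph.

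Your approach via equitable partitions and explicit characteristic polynomials is valid and entirely self-contained (it does not invoke Weyl), and it even yields the slightly sharper bound $(k-1)+\sqrt{k(n-k)}$ when $k\ge 2$. The trade-off is length: you carry out a cubic-to-quadratic reduction and a discriminant simplification where the paper gets away with a single matrix perturbation estimate. Both arguments are fine; the paper's is quicker, yours is more hands-on.
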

\begin{proof}
We have $\lambda(G)\le\mathrm{spex}(n,\mathcal F)\le\lambda(K_k+(K_2\cup\overline{K_{n-k-2}}))$, and by the Weyl inequalities we have
$$\lambda(K_k+(K_2\cup\overline{K_{n-k-2}}))\le\lambda(K_{k,n-k})+k=\sqrt{k(n-k)}+k.$$
\end{proof}

\begin{lemma}
\label{lemma useful eigenvalue-eigenvector equation}
    Let $G$ be a connected graph and let $y$ be a Perron vector of $A_{\alpha}(G)$. Then for all $v \in V(G)$, we have
    $$\begin{aligned}
    \lambda_\alpha(G)y_v&=\alpha d(v)y_v+(1-\alpha)\sum_{u\sim v}y_u\\
    \lambda_{\alpha}^2(G)y_v &= \alpha d(v) \lambda_{\alpha}(G) y_v + \alpha(1-\alpha)\sum_{u \sim v}d(u) y_u + (1-\alpha)^2 \sum_{w \sim v}\sum_{u \sim w}y_u\\
    \lambda_\alpha(G)&=\frac{1}{y^Ty}\left(\alpha\sum_{v\in V(G)}d(v)y_v^2+(1-\alpha)\sum_{uv\in E(G)}y_uy_v\right).
    \end{aligned}$$
\end{lemma}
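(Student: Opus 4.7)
The plan is to derive all three identities directly from the Perron eigen-equation $A_\alpha(G) y = \lambda_\alpha(G) y$, combined with the defining decomposition $A_\alpha = \alpha D + (1-\alpha) A$. Connectedness of $G$ makes $A_\alpha$ irreducible and nonnegative, so Perron--Frobenius guarantees that the Perron vector $y$ is strictly positive, which justifies all the manipulations below.

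For the first identity, I would read off the $v$-th coordinate of $A_\alpha y = \lambda_\alpha(G) y$. The diagonal entry $(A_\alpha)_{vv} = \alpha d(v)$ and the off-diagonal entry $(A_\alpha)_{vu}$ equals $(1-\alpha)$ when $u \sim v$ and $0$ otherwise, which immediately yields
$$\lambda_\alpha(G) y_v \;=\; \alpha d(v) y_v + (1-\alpha) \sum_{u \sim v} y_u.$$

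For the second identity, I would multiply the first identity through by $\lambda_\alpha(G)$ to obtain
$$\lambda_\alpha^2(G) y_v \;=\; \alpha d(v) \lambda_\alpha(G) y_v + (1-\alpha) \sum_{u \sim v} \lambda_\alpha(G) y_u,$$
and then substitute the first identity, applied this time at each neighbor $u$ of $v$, into the sum on the right. After distributing the factor $(1-\alpha)$ and renaming dummy indices in the resulting double sum, we recover exactly the stated expression. Equivalently, one can expand $A_\alpha^2 = \alpha^2 D^2 + \alpha(1-\alpha)(DA + AD) + (1-\alpha)^2 A^2$ and apply each summand to $y$; both routes produce the same formula.

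For the third identity, I would use the Rayleigh quotient: since $y$ is a $\lambda_\alpha(G)$-eigenvector of the symmetric matrix $A_\alpha$, we have $\lambda_\alpha(G) = (y^T A_\alpha y)/(y^T y)$. Splitting $y^T A_\alpha y = \alpha \, y^T D y + (1-\alpha) \, y^T A y$ and using $y^T D y = \sum_{v} d(v) y_v^2$ together with the edge-sum expansion of $y^T A y$ (interpreted consistently with the paper's convention for $\sum_{uv \in E(G)}$) yields the displayed formula after dividing by $y^T y$. There is no real obstacle to any of the three parts: the lemma is purely a bookkeeping exercise that records, for the $A_\alpha$-matrix, the direct analogues of the familiar eigenvalue--eigenvector identities used throughout spectral extremal graph theory. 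The only care needed is to keep the splitting $\alpha D + (1-\alpha) A$ consistent through each derivation.
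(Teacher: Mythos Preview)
Your proof is correct. The paper states this lemma without proof, treating all three identities as routine consequences of the eigen-equation $A_\alpha y = \lambda_\alpha y$ and the decomposition $A_\alpha = \alpha D + (1-\alpha)A$; your write-up supplies exactly that standard verification, so there is no approach to compare against. One small point worth flagging: in the third identity, the usual expansion gives $y^T A y = 2\sum_{\{u,v\}\in E(G)} y_u y_v$, so the displayed formula is only literally correct if $\sum_{uv\in E(G)}$ is read as a sum over ordered adjacent pairs (each edge counted twice); your parenthetical about interpreting the sum ``consistently with the paper's convention'' is the right way to handle this, but you may wish to make the convention explicit rather than leave it implicit.
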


Now let $G$ be a graph in $\mathrm{SPEX}_\alpha(n,\mathcal F)$ and let $x$ be its Perron vector, scaled so that its largest entry is $x_z=1$. Let $\sigma$ be a positive constant satisfying $3c\sigma<1-\alpha$ and $(3c+1)\sigma < \alpha(1-\alpha)$. With $V=V(G)$, let $L=\{v\in V:x_v>\sigma\}$ and let $S=V-L$.  Also, write $L_i(v)=L\cap N_i(v)$ and $S_i(v)=S\cap N_i(v)$. As in Section \ref{Section EX SPEX}, all claims below are only asserted for $n$ large enough unless stated otherwise.

\begin{lemma}
   \label{bounds on alpha spex} 
   For $n$ sufficiently large, we have that
   \[\rspex_\alpha(n, \mathcal F) \ge \max\left\{\alpha n + \frac{k}{\alpha} - k - 1 -\frac{2k(k+1)}{\alpha^3 n - \alpha^2(k+1+\alpha) + \alpha k}, \alpha n + \frac{k}{\alpha} - k - 1 - \alpha, \alpha(n-1)\right\}.\]
\end{lemma}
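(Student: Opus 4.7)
The plan is to witness each of the three lower bounds by exhibiting an $\mathcal F$-free graph whose $\alpha$-spectral radius attains (or lower-bounds) it. Since $H\in\mathrm{SPEX}(n,\mathcal F)$, the graph $H$ is $\mathcal F$-free, and in both cases (a) and (b) of Theorem~\ref{thm spex to alpha} we have $H\supseteq K_k+\overline{K_{n-k}}\supseteq K_{1,n-1}$; hence both $G_0:=K_k+\overline{K_{n-k}}$ and $K_{1,n-1}$ are $\mathcal F$-free. The bound $\alpha(n-1)$ is then immediate from Lemma~\ref{bounds for alpha spectral radius of any graph}: $\lambda_\alpha(K_{1,n-1})\ge\alpha\Delta(K_{1,n-1})=\alpha(n-1)$.

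For the remaining two bounds I would analyze $\lambda_\alpha(G_0)$ through the equitable partition of $G_0$ into the clique and the independent set. The quotient matrix of $A_\alpha(G_0)$ is
\[B=\begin{pmatrix}\alpha(n-1)+(1-\alpha)(k-1)&(1-\alpha)(n-k)\\(1-\alpha)k&\alpha k\end{pmatrix},\]
so $\lambda_\alpha(G_0)$ is the larger root of
\[p(t)=t^2-(\alpha n+k-1)t+\alpha k(k-1)+(2\alpha-1)k(n-k).\]
Setting $\mu:=\alpha n+k/\alpha-k-1$, a direct algebraic simplification (in which $\alpha=1$ turns out to be a double root of the resulting coefficient polynomial, allowing a factor of $(1-\alpha)^2$ to be pulled out) gives the closed form
\[p(\mu)=\frac{k(1-\alpha)^2\bigl(k-(k+1)\alpha\bigr)}{\alpha^2}.\]
The crucial identity $\alpha^3n-\alpha^2(k+1+\alpha)+\alpha k=\alpha^2(\mu-\alpha)$ then shows that the subtracted quantity in the first bound is exactly $\epsilon:=2k(k+1)/(\alpha^2(\mu-\alpha))$. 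Since $p$ is an upward parabola whose vertex lies well to the left of $\mu-\epsilon$ for $n$ large, proving $\lambda_\alpha(G_0)\ge\mu-\epsilon$ reduces to showing $p(\mu-\epsilon)\le 0$. Expanding
\[p(\mu-\epsilon)=p(\mu)-(\alpha n+2k/\alpha-3k-1)\,\epsilon+\epsilon^2\]
and using the crude estimate $p(\mu)\le k^2/\alpha^2$ when $\alpha<k/(k+1)$ (otherwise $p(\mu)\le 0$ and there is nothing to check), together with $(\alpha n+2k/\alpha-3k-1)\epsilon\to 2k(k+1)/\alpha^2$ as $n\to\infty$, I get $p(\mu-\epsilon)<0$ for $n$ large because $2k(k+1)>k^2$.

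The second bound $\mu-\alpha$ is derived by the same method, showing $p(\mu-\alpha)\le 0$: the analogous expansion
\[p(\mu-\alpha)=p(\mu)-\alpha(\alpha n+2k/\alpha-3k-1)+\alpha^2\]
is dominated by the $-\alpha^2 n$ term and hence is negative for $n$ large, giving $\lambda_\alpha(G_0)\ge\mu-\alpha$. The main obstacle is algebraic rather than conceptual: recognizing the factorization of $p(\mu)$ and the identity linking $\alpha^3n-\alpha^2(k+1+\alpha)+\alpha k$ to $\mu-\alpha$ is what motivates the otherwise opaque form of $\epsilon$. Once these two identities are spotted, the proof is just a careful expansion and comparison of leading terms.
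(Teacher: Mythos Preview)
Your proof is correct and follows the same approach as the paper: both argue that $K_k+\overline{K_{n-k}}$ is $\mathcal F$-free and hence $\mathrm{spex}_\alpha(n,\mathcal F)\ge\lambda_\alpha(K_k+\overline{K_{n-k}})$, which in turn is at least the displayed maximum. The only difference is that the paper simply cites \cite{CLLYZ} for the three lower bounds on $\lambda_\alpha(K_k+\overline{K_{n-k}})$, whereas you actually carry out the quotient-matrix computation and the algebraic verification (your identities $p(\mu)=k(1-\alpha)^2(k-(k+1)\alpha)/\alpha^2$ and $\alpha^3 n-\alpha^2(k+1+\alpha)+\alpha k=\alpha^2(\mu-\alpha)$ are correct); your detour through $K_{1,n-1}$ for the $\alpha(n-1)$ bound is unnecessary since $\Delta(K_k+\overline{K_{n-k}})=n-1$ already gives it, but this is harmless.
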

\begin{proof}
To obtain the lower bound, observe that we have assumed $K_k+\overline{K_{n-k}}$ is $\mathcal F$-free. Thus, $\raspex(n, \mathcal F) \ge \lambda_{\alpha}(K_k+\overline{K_{n-k}})$ and
\[\lambda_{\alpha}(K_k+\overline{K_{n-k}}) \ge \max\left\{\alpha n + \frac{k}{\alpha} - k - 1 -\frac{2k(k+1)}{\alpha^3 n - \alpha^2(k+1+\alpha) + \alpha k}, \alpha n + \frac{k}{\alpha} - k - 1 - \alpha, \alpha(n-1)\right\},\]
see for example \cite{CLLYZ}.
\end{proof}

\begin{lemma}
\label{degree of vertices in L}
    For all $v \in L$, we have $d(v) \ge \left(1 - \dfrac{1}{(k+1)\sqrt{n}}\right)n$.
\end{lemma}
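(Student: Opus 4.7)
The plan is to apply the squared-eigenvalue equation from Lemma \ref{lemma useful eigenvalue-eigenvector equation} at $v$, namely
$$\lambda_\alpha^2 x_v = \alpha d(v) \lambda_\alpha x_v + \alpha(1-\alpha)\sum_{u\sim v} d(u) x_u + (1-\alpha)^2 \sum_{w\sim v}\sum_{u\sim w}x_u,$$
and then bound the two non-obvious sums on the right. Using $x_u\le 1$ for all $u$, together with the identity $\sum_{u\sim v}d(u)=d(v)+2e(N_1(v))+e(N_1(v),N_2(v))$ and inequality (\ref{upper bound on edges in N_1, <N_1, N_2>}), both $\sum_{u\sim v}d(u)x_u$ and $\sum_{w\sim v}\sum_{u\sim w}x_u$ can be shown to be at most $d(v)+3cn$. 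Because $\alpha(1-\alpha)+(1-\alpha)^2=1-\alpha$, the inequality collapses to
$$\lambda_\alpha x_v(\lambda_\alpha-\alpha d(v))\le(1-\alpha)(d(v)+3cn).$$

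Next, using $x_v>\sigma$ since $v\in L$, and solving for $d(v)$, one obtains
$$d(v) > \frac{\sigma\lambda_\alpha^2-3cn(1-\alpha)}{\sigma\alpha\lambda_\alpha+1-\alpha}.$$
A short computation of the derivative shows the right-hand side is monotonically increasing in $\lambda_\alpha$, so one may substitute the lower bound $\lambda_\alpha\ge\alpha n + \tfrac{k}{\alpha}-k-1-\alpha$ from Lemma \ref{bounds on alpha spex}. Writing the resulting fraction as $(\sigma\alpha^2 n^2 + Bn + C)/(\sigma\alpha^2 n + E)$, the numerator and denominator share leading coefficient $\sigma\alpha^2$, so polynomial long division yields quotient exactly $n$ plus an $O(1)$ remainder. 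The constraints $3c\sigma<1-\alpha$ and $(3c+1)\sigma<\alpha(1-\alpha)$ on $\sigma$ guarantee that the denominator is positive and keep the correction term bounded.

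The upshot is $d(v)\ge n-K$ for some constant $K=K(\alpha,k,c,\sigma)$, independent of $n$. Since $\sqrt{n}/(k+1)\to\infty$, for $n$ large enough we have $K\le\sqrt{n}/(k+1)$, giving $d(v)\ge n-\sqrt n/(k+1)=\left(1-\tfrac{1}{(k+1)\sqrt n}\right)n$, as required.

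The main obstacle is obtaining an $O(1)$—rather than $O(\sqrt n)$—correction to $n$. The key is to exploit the fact that the lower bound expression for $d(v)$ is monotonically increasing in $\lambda_\alpha$, so one plugs in the \emph{lower} bound for $\lambda_\alpha$; substituting the only readily available upper bound $\lambda_\alpha\le \alpha n+(1-\alpha)\sqrt{kn}+O(1)$ from Lemma \ref{bounds for alpha spectral radius of any graph} combined with Lemma \ref{upper bound for lambda(G)} into the denominator would give a $\sqrt{kn}$ term, worsening the degree bound by a factor of $\sqrt n$ and breaking the argument.
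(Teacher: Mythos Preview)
Your proof is correct and follows essentially the same approach as the paper: both start from the squared $A_\alpha$-eigenvector identity at $v$, bound the two sums by $d(v)+2e(N_1(v))+e(N_1(v),N_2(v))\le d(v)+3cn$ via inequality~(\ref{upper bound on edges in N_1, <N_1, N_2>}), and then combine $x_v>\sigma$ with the lower bound $\lambda_\alpha\ge\alpha n+O(1)$ from Lemma~\ref{bounds on alpha spex}. The only difference is cosmetic: the paper argues by contradiction (assuming $d(v)<n-\sqrt{n}/(k+1)$ makes the left side $\Omega(n^{3/2})$ while the right side is $O(n)$), whereas you solve the inequality directly for $d(v)$ and, via the monotonicity observation, obtain the slightly stronger conclusion $d(v)\ge n-O(1)$. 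One small omission: your manipulation implicitly assumes $\lambda_\alpha-\alpha d(v)>0$ when replacing $x_v$ by $\sigma$, but this is harmless since otherwise $d(v)\ge\lambda_\alpha/\alpha\ge n-1$ already.
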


\begin{proof}
Suppose to the contrary that there is some $v\in L$ with $d(v) < \left(1 - \dfrac{1}{(k+1)\sqrt{n}}\right)n$.
Then, 
$$
    \begin{aligned}
        \lambda_{\alpha}^2x_v &= \alpha d(v) \lambda_{\alpha} x_v + \alpha(1-\alpha)\sum_{u \sim v}d(u) x_u + (1-\alpha)^2 \sum_{w \sim v}\sum_{u \sim w}x_u\\
        &\le \alpha d(v) \lambda_{\alpha} x_v + \alpha(1-\alpha)(d(v) + 2e(N_1(v)) + e(N_1(v), N_2(v))) \\
        &+ (1-\alpha)^2 (d(v) + 2e(N_1(v)) + e(N_1(v), N_2(v)))\\
        &< \alpha d(v) \lambda_{\alpha} x_v + \alpha(1-\alpha)(d(v) +  3cn) + (1-\alpha)^2(d(v) +  3cn)\\
        &< \alpha d(v) \lambda_{\alpha} x_v + (1-\alpha)(3c+1)n.
    \end{aligned}
$$

Then 
$$
    \lambda_{\alpha}(\lambda_{\alpha} - \alpha d(v))x_v < (1 - \alpha)(3c+1)n.
$$
On the other hand, from Lemma \ref{bounds on alpha spex} and $d(v) < \left(1 - \dfrac{1}{(k+1)\sqrt{n}}\right)n$ we have
$$\lambda_{\alpha}(\lambda_{\alpha} - \alpha d(v))x_v > \alpha(n-1)\left[\alpha(n-1) - \alpha\left(1 - \dfrac{1}{(k+1)\sqrt{n}}\right)n\right]\sigma=\Omega(n^{3/2}).$$
which gives a contradiction.
\end{proof}

\begin{lemma}
\label{size of color class of F}
    There is some $F'\in\mathcal F$ such that $F'\subseteq K_{k+1,\infty}$. Consequently, $F' \subseteq K_{k+1,m}$ for some fixed $m$.
\end{lemma}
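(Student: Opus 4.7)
The plan is to exhibit a member of $\mathcal F$ inside some $K_{k+1,n-k-1}$ by a spectral radius comparison, using the hypothesis that $H\in\mathrm{SPEX}(n,\mathcal F)$. The intuition is that $K_{k+1,n-k-1}$ has spectral radius of order $\sqrt{(k+1)n}$, which beats the $\sqrt{kn}+O(1)$ upper bound for $\lambda(H)$; if $K_{k+1,n-k-1}$ were $\mathcal F$-free, this would contradict the fact that $H$ is spectrally extremal. Once $\mathcal F$ is shown to hit $K_{k+1,n-k-1}$, the finiteness of whichever $F'\in\mathcal F$ realizes the embedding furnishes the promised constant $m$.

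First, I would compute
\[
\lambda(K_{k+1,n-k-1})=\sqrt{(k+1)(n-k-1)}.
\]
Then I would invoke Lemma \ref{upper bound for lambda(G)}, which applies since $H$ is $\mathcal F$-free, to obtain
\[
\lambda(H)\le\sqrt{k(n-k)}+k.
\]
The key estimate is
\[
\sqrt{(k+1)(n-k-1)}-\sqrt{k(n-k)}=\frac{n-(2k+1)}{\sqrt{(k+1)(n-k-1)}+\sqrt{k(n-k)}}=\Theta(\sqrt{n}),
\]
so for all sufficiently large $n$,
\[
\lambda(K_{k+1,n-k-1})>\sqrt{k(n-k)}+k\ge\lambda(H)=\mathrm{spex}(n,\mathcal F).
\]

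Hence, for $n$ large enough, $K_{k+1,n-k-1}$ cannot be $\mathcal F$-free, so it contains some member $F'\in\mathcal F$ as a subgraph. In particular, $F'\subseteq K_{k+1,n-k-1}\subseteq K_{k+1,\infty}$, which is the first conclusion. Since $F'$ has finitely many vertices, we may take $m=|V(F')|$ (indeed any $m\ge|V(F')|-(k+1)$ works), giving $F'\subseteq K_{k+1,m}$. There is no real obstacle here beyond the elementary spectral computation; the lemma is essentially a direct consequence of the spectral comparison together with the upper bound already established in Lemma \ref{upper bound for lambda(G)}.
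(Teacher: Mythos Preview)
Your proof is correct and follows essentially the same approach as the paper: both argue that $K_{k+1,n-k-1}$ cannot be $\mathcal F$-free because its spectral radius $\sqrt{(k+1)(n-k-1)}$ exceeds $\mathrm{spex}(n,\mathcal F)\le\lambda(K_k+(K_2\cup\overline{K_{n-k-2}}))$, the latter bound being exactly what Lemma~\ref{upper bound for lambda(G)} encodes. The paper states the comparison $\lambda(K_{k+1,n-k-1})>\lambda(K_k+(K_2\cup\overline{K_{n-k-2}}))$ directly, while you route it through Lemma~\ref{upper bound for lambda(G)} and compute the gap explicitly, but the substance is identical.
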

\begin{proof}
   If not, then $K_{k+1, n-k-1}$ is $\mathcal F$-free. Since $\lambda(K_{k+1, n-k-1}) > \lambda(K_k+(K_2\cup\overline{K_{n-k-2}})$, this contradicts the fact that $\mathrm{spex}(n,\mathcal F)\le\lambda(K_k+(K_2\cup\overline{K_{n-k-2}})$.
\end{proof}

\begin{lemma}
\label{exact size of L}
The size of $L$ is $k$.
\end{lemma}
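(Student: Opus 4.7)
The plan is to establish $|L|=k$ by two separate arguments: an upper bound $|L|\le k$ using the large-degree conclusion of Lemma \ref{degree of vertices in L}, and a lower bound $|L|\ge k$ using the second-moment identity from Lemma \ref{lemma useful eigenvalue-eigenvector equation} applied at the vertex $z$ with $x_z=1$.

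For the upper bound, if $|L|\ge k+1$ then Lemma \ref{degree of vertices in L} guarantees each $v\in L$ has at most $\sqrt n/(k+1)$ non-neighbors, so any $k+1$ vertices of $L$ have at least $n-\sqrt n-(k+1)$ common neighbors. For $n$ large this exceeds the constant $m$ from Lemma \ref{size of color class of F}, producing a copy of $K_{k+1,m}\supseteq F'$ in $G$ and contradicting $\mathcal F$-freeness.

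For the lower bound I would suppose for contradiction that $|L|\le k-1$. Since $z\in L$ but $z\notin N(z)$, we have $|L\cap N(z)|\le|L|-1\le k-2$. Applying Equation \ref{upper bound on edges in N_1, <N_1, N_2>} at $z$ yields
\[\sum_{u\in N(z)}d(u)=d(z)+2e(N(z))+e(N(z),N_2(z))\le(3c+1)n.\]
Splitting each term in the second identity of Lemma \ref{lemma useful eigenvalue-eigenvector equation} into $L$- and $S$-contributions, using $x_u\le 1$ on $L$ and $x_u\le\sigma$ on $S$, and the trivial bound $|N(z)\cap N(w)|\le n$ for $w\in L\setminus\{z\}$, I obtain
\begin{align*}
  \sum_{u\sim z}d(u)x_u&\le(k-2)(n-1)+\sigma(3c+1)n,\\
  \sum_{u\sim z}\sum_{w\sim u}x_w&\le d(z)+(k-2)n+\sigma(3c+1)n.
\end{align*}
Substituting these along with $d(z)\le n-1$ into the second-moment identity gives $\lambda_\alpha^2-\alpha(n-1)\lambda_\alpha\le(1-\alpha)[\alpha(k-2)+(1-\alpha)(k-1)+\sigma(3c+1)]n+O(1)$.

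To finish I would combine this with the lower bound $\lambda_\alpha\ge\alpha n+k/\alpha-k-1-O(n^{-1})$ from Lemma \ref{bounds on alpha spex}, which yields $\lambda_\alpha-\alpha(n-1)\ge(1-\alpha)(k-\alpha)/\alpha-O(n^{-1})$ and hence $\lambda_\alpha(\lambda_\alpha-\alpha(n-1))\ge(1-\alpha)(k-\alpha)n-O(1)$. Dividing both bounds by $(1-\alpha)n$ and invoking the identity $\alpha(k-2)+(1-\alpha)(k-1)=k-1-\alpha$, the inequality collapses to $1\le\sigma(3c+1)+O(n^{-1})$, contradicting the standing assumption $(3c+1)\sigma<\alpha(1-\alpha)<1$ for $n$ sufficiently large. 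The main obstacle is the bookkeeping for the double sum $\sum_{u\sim z}\sum_{w\sim u}x_w$: one must isolate the $w=z$ term, bound the $w\in L\setminus\{z\}$ contribution by $(k-2)n$, and use the local edge estimate for the $S$-contribution, all sharply enough that the final inequality retains the decisive constant $1$ on its left-hand side.
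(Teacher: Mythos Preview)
Your proposal is correct and follows essentially the same approach as the paper: the upper bound via common neighborhoods and Lemma~\ref{size of color class of F}, and the lower bound via the second-order eigenvector identity at $z$, comparing against the bound $\lambda_\alpha(\lambda_\alpha-\alpha(n-1))\ge(1-\alpha)(k-\alpha)n-O(1)$ from Lemma~\ref{bounds on alpha spex}. The only cosmetic differences are that you isolate the $w=z$ term explicitly (the paper absorbs it into the cruder bound $(k-1)n$) and you invoke the constraint $(3c+1)\sigma<\alpha(1-\alpha)$ rather than $3c\sigma<1-\alpha$; both are among the standing hypotheses on $\sigma$ and either suffices.
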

\begin{proof}
First assume for a contradiction that $|L|\ge k+1$. Then there is a set $L' \subseteq L$ with $L' = k+1$. Therefore, every vertex $v\in L'$ has $d(v) \ge \left(1 - \dfrac{1}{(k+1)\sqrt{n}}\right)n$, and the common neighborhood of the vertices in $L'$ has at least $n - \sqrt{n}$ vertices. This implies that for $n$ large enough, $G\supseteq K_{k+1,n-\sqrt n}$ which contradicts Lemma \ref{size of color class of F}.

To show $|L|\ge k$, assume to the contrary that $|L| \le k-1$. In cases (b) and (c), we have
    $$\begin{aligned}
         \lambda_{\alpha}^2 &= \lambda_{\alpha}^2x_z = \alpha d(z) \lambda_{\alpha} x_z + \alpha(1-\alpha)\sum_{u \sim z}d(u) x_u + (1-\alpha)^2 \sum_{w \sim z}\sum_{u \sim w}x_u\\
        &\le \alpha d(z) \lambda_{\alpha} + \alpha(1-\alpha)\left(\sum_{u \in L_1(z)}d(u) x_u + \sum_{u \in S_1(z)}d(u) x_u\right) \\
        &+ (1-\alpha)^2  \left(\sum_{w\sim z}\sum_{u\in L_1(w)}x_u+\sum_{w\sim z}\sum_{u\in S_1(w)}x_u \right)\\
        &< \alpha d(z) \lambda_{\alpha} + \alpha(1-\alpha)\left((k-2)n + (d(z) + 2e(N_1(z)) + e(N_1(z), N_2(z)))\sigma\right)\\
        &+ (1-\alpha)^2\left((k-1)n + (2e(N_1(z)) + e(N_1(z), N_2(z)))\sigma\right)\\
        &< \alpha d(z) \lambda_{\alpha} + \alpha(1-\alpha)\left((k-2)n + (d(z) + 3cn)\sigma\right)+(1-\alpha)^2\left(3cn\sigma + (k-1)n\right)\\
        &\le \alpha d(z) \lambda_{\alpha} + (1-\alpha)((k-1)n + 3cn\sigma).
    \end{aligned}$$
Therefore,
$$\begin{aligned}
\lambda_{\alpha}(\lambda_{\alpha} - \alpha d(z)) &< (1-\alpha)(k-1+3c\sigma)n.
\end{aligned}$$
But from Lemma \ref{bounds on alpha spex} we have
$$\begin{aligned}
\lambda_\alpha(\lambda_\alpha - \alpha d(z))&\ge\alpha(n-1)\left(\alpha n+\frac{k}{\alpha}-k-1-\frac{4k^2}{\alpha^4 n}-\alpha(n-1)\right)=(1-\alpha)(k-\alpha)n+O(1)
\end{aligned}$$
using $2k(k+1)/(\alpha^3n-\alpha^2(k+1+\alpha)+ak)\le 4k^2/(\alpha^4n)$. Since $3c\sigma<1-\alpha$, these two inequalities give a contradiction.
\end{proof}

It follows from Lemma \ref{degree of vertices in L} and Lemma \ref{exact size of L} that $L = k$ and for all $v \in L,$ $d(v) \ge \left(1 - \dfrac{1}{(k+1)\sqrt{n}}\right)n$. Let $R$ denote the common neighborhood of all vertices in $L$. Then $|R| \ge\left(1 - \dfrac{k}{(k+1)\sqrt{n}}\right)n > n - \sqrt{n}$. Let $E = S \setminus R$, so that $|E| \le \sqrt{n}$. We will show that $E = \emptyset$ and so $G \supseteq K_{k, n-k}$.

\begin{lemma}
\label{Perron entries of L}
For any $v \in L$, we have $x_v \ge 1-\alpha$.
\end{lemma}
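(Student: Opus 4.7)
The plan is to argue by contradiction: suppose some $v \in L$ satisfies $x_v < 1 - \alpha$. Then
\[S := \sum_{u \in L} x_u \le x_v + (k-1) < k - \alpha,\]
since $x_u \le x_z = 1$ for each $u \in L$. I will derive an upper bound on $\lambda_\alpha$ strictly below the lower bound from Lemma~\ref{bounds on alpha spex}, yielding a contradiction.

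The technical heart is a tight estimate for $\Sigma_R := \sum_{u \in R} x_u$. As noted preceding the lemma, any $u \in V \setminus L$ has $d(u) \le k + m - 1$ (at most $m-1$ neighbors in $V \setminus L$ and at most $k$ in $L$). Using $\lambda_\alpha \ge \alpha(n-1)$ and $\sum_{w \sim u} x_w \le k + (m-1)\sigma$, the eigenvalue equation at $u$ yields $x_u = O(1/n)$ uniformly for $u \in V \setminus L$. For $u \in R$ specifically, $u$ is adjacent to all of $L$ and has at most $m-1$ other neighbors (each with entry $O(1/n)$), so $\sum_{w \sim u} x_w = S + O(1/n)$. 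Combined with $\lambda_\alpha - \alpha d(u) = \alpha n + O(\sqrt{n})$ (using $\lambda_\alpha \le \alpha \Delta + (1-\alpha)\lambda(G)$ and Lemma~\ref{upper bound for lambda(G)}), solving gives $x_u = (1-\alpha)S/(\alpha n) + O(n^{-3/2})$. Summing over the $|R| = n - O(\sqrt{n})$ vertices of $R$:
\[\Sigma_R = \frac{(1-\alpha) S}{\alpha} + O(n^{-1/2}).\]

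Now apply the eigenvalue equation at $z$: $\lambda_\alpha = \alpha d(z) + (1-\alpha)\Sigma_z$. Decomposing $\Sigma_z = \sum_{u \in N(z) \cap L} x_u + \Sigma_R + \sum_{u \in N(z) \cap E} x_u$, the first sum is at most $S - 1$ (since $N(z) \cap L \subseteq L \setminus \{z\}$ and $x_z = 1$), and the last is $O(n^{-1/2})$ (using $|E| \le \sqrt{n}$ and $x_u = O(1/n)$ on $V \setminus L$). Hence
\[\Sigma_z \le (S-1) + \frac{(1-\alpha) S}{\alpha} + O(n^{-1/2}) = \frac{S}{\alpha} - 1 + O(n^{-1/2}),\]
and using $d(z) \le n-1$,
\[\lambda_\alpha \le \alpha(n-1) + (1-\alpha)\left(\frac{S}{\alpha} - 1\right) + O(n^{-1/2}).\]
Combined with the lower bound $\lambda_\alpha \ge \alpha(n-1) + (1-\alpha)(k-\alpha)/\alpha - O(1/n)$ from Lemma~\ref{bounds on alpha spex}, this gives $(1-\alpha)(k-S)/\alpha \le O(n^{-1/2})$, i.e., $k - S \le O(n^{-1/2})$. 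But $k - S > \alpha$ is a fixed positive constant, contradicting this for $n$ large.

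The main obstacle is the tight $O(n^{-1/2})$ estimate for $\Sigma_R$, which requires a two-step bootstrap: first $x_u = O(1/n)$ on all of $V \setminus L$ via the coarse eigenvalue bound, then the refined $x_u = (1-\alpha) S/(\alpha n) + O(n^{-3/2})$ on $R$ using the improved entry estimates on $u$'s non-$L$-neighbors. This precision is essential because the gap $k - S > \alpha$ to be contradicted is $\Omega(1)$, so anything looser than $o(1)$ would fail.
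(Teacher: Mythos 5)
Your proposal takes a genuinely different route from the paper. The paper invokes the second-order eigenvector equation at $z$, $\lambda_\alpha(\lambda_\alpha-\alpha d(z)) \le \alpha(1-\alpha)\sum_{u\sim z}d(u)x_u + (1-\alpha)^2\sum_{w\sim z}\sum_{u\sim w}x_u$, and bounds the right side using only the coarse facts that $x_u\le\sigma$ on $S$ and $2e(N_1)+e(N_1,N_2)\le 3cn$; the hypothesis $x_v<1-\alpha$ is then fed in via $\sum_{u\in L}x_u\le k-1+x_v$. You instead use the \emph{first}-order equation at $z$ but compensate with a much sharper estimate of $\Sigma_R=\sum_{u\in R}x_u$, showing $\Sigma_R=\frac{(1-\alpha)S}{\alpha}+O(n^{-1/2})$ where $S=\sum_{u\in L}x_u$. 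Both routes are valid; the paper's stays at a coarser level and needs no bootstrapping, while yours is more transparent about where the quantity $S$ enters.

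However, the step where you derive $x_u=O(1/n)$ uniformly on $V\setminus L$ is flawed as written. You assert $d(u)\le k+m-1$ and $\sum_{w\sim u}x_w\le k+(m-1)\sigma$ for $u\notin L$, citing a note "preceding the lemma" — but what is actually available (and is stated \emph{after} this lemma in the paper, though it depends only on earlier results) is $d_R(u)<m$. A vertex $u\notin L$ can additionally have up to $|E|\le\sqrt n$ neighbors in $E$, so a priori $d(u)\le k+m-1+\sqrt n$ and $\sum_{w\sim u}x_w\le k+(m-1+\sqrt n)\sigma$, which only gives $x_u=O(n^{-1/2})$ in one pass. You can still reach $x_u=O(1/n)$, either by a two-step bootstrap (first pass gives $O(n^{-1/2})$, which then makes the $E$-contribution $O(1)$, and a second pass gives $O(1/n)$), or more cleanly by the argmax argument used later in the paper's Lemma~\ref{alpha initial S eigenweight upper bound}: taking $v=\arg\max_{u\in S}x_u$, all non-$L$ neighbors of $v$ have weight $\le x_v$, giving $(\lambda_\alpha-m-\sqrt n)x_v\le k$. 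The same oversight recurs in your estimate for $u\in R$: the "at most $m-1$ other neighbors" should be "at most $m-1+\sqrt n$", so $\sum_{w\sim u}x_w=S+O(n^{-1/2})$ rather than $S+O(1/n)$. Fortunately this weaker error bound still yields $x_u=\frac{(1-\alpha)S}{\alpha n}+O(n^{-3/2})$ and $\Sigma_R=\frac{(1-\alpha)S}{\alpha}+O(n^{-1/2})$, and the final contradiction $k-S\le O(n^{-1/2})$ against $k-S>\alpha$ survives. So the gap is real in the written argument but does not sink the strategy.
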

\begin{proof}
Assume to the contrary that there is some $v \in L$ with $x_v < 1-\alpha$. Then the second-degree eigenvector equation with respect to $z$ gives
$$\begin{aligned}
    \lambda_\alpha(\lambda_\alpha-\alpha d(z))&\le \alpha(1-\alpha)\sum_{u\sim z}d(u)x_u+(1-\alpha)^2\sum_{w\sim z}\sum_{u\sim w}x_u\\
    &\le \alpha(1-\alpha)\left(\sum_{u\in L_1(z)}d(u)x_u+\sum_{u\in S_1(z)}d(u)x_u\right)\\
    &\ +(1-\alpha)^2\left(\sum_{w\sim z}\sum_{u\in L_1(w)}x_u+\sum_{w\sim z}\sum_{u\in S_1(w)}x_u\right)\\
    &\le\alpha(1-\alpha)\left((k-1)n+(3c+1)\sigma n\right)+(1-\alpha)^2\left((k-1+x_v)n+(3c+1)\sigma n\right)\\
    &\le \left(\alpha(k-1)+(1-\alpha)(k-\alpha)+(3c+1)\sigma\right)(1-\alpha)n.
\end{aligned}$$

On the other hand, from Lemma \ref{bounds on alpha spex} we have
$\lambda_\alpha(\lambda_\alpha-\alpha d(z))\ge (1-\alpha)(k-\alpha)n+O(1)$ which, since $\alpha(k-1)+(1-\alpha)(k-\alpha)+(3c+1)\sigma<(k-\alpha),$ gives a contradiction.
\end{proof}

Note that since $F'\subseteq K_{k+1,m}$, for any $v\not\in L$ we have $d_R(v)<m$.

\begin{lemma} \label{alpha initial S eigenweight upper bound} For any $v\in S$ we have $x_v=O(n^{-1})$.
\end{lemma}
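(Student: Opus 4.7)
The plan is to start from the rewritten first-order eigenvector equation
\[(\lambda_\alpha - \alpha d(v))\, x_v = (1-\alpha) \sum_{u \sim v} x_u,\]
and to bootstrap: first obtain a weak bound $x_v = O(n^{-1/2})$ using only the trivial pointwise bounds $x_u \le 1$ on $L$ and $x_u \le \sigma$ on $S$, then plug this back in to improve to $x_v = O(n^{-1})$.

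The first ingredient is a uniform degree bound on $S$. If $v \in S$, then $v \notin L$, so by Lemma \ref{size of color class of F} we have $d_R(v) < m$; since also $d_L(v) \le k$ and $d_E(v) \le |E| \le \sqrt{n}$, we get
\[d(v) \le k + m + \sqrt{n}.\]
Combined with $\lambda_\alpha \ge \alpha(n-1)$ from Lemma \ref{bounds on alpha spex}, this gives
\[\lambda_\alpha - \alpha d(v) \ge \alpha(n - 1 - k - m - \sqrt{n}) = \Theta(n),\]
so the left-hand side of the eigenvector equation, as a function of $x_v$, has a linear coefficient of order $n$.

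For the first pass, I would split the sum on the right-hand side according to whether the neighbor lies in $L$ or in $S$, using $x_u \le 1$ for $u \in L$ and $x_u \le \sigma$ for $u \in S$:
\[\sum_{u \sim v} x_u \;\le\; k + d_S(v)\,\sigma \;\le\; k + (k + m + \sqrt{n})\,\sigma \;=\; O(\sqrt{n}).\]
Dividing by $\lambda_\alpha - \alpha d(v) = \Theta(n)$ yields $x_v = O(n^{-1/2})$ uniformly for all $v \in S$.

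For the bootstrap, I would rerun the same estimate using the improved bound $x_u = O(n^{-1/2})$ for $u \in S$ in place of $x_u \le \sigma$. The contribution of $S$-neighbors becomes $d_S(v) \cdot O(n^{-1/2}) \le (k + m + \sqrt{n}) \cdot O(n^{-1/2}) = O(1)$, and hence $\sum_{u \sim v} x_u = O(1)$, giving $x_v = O(1)/\Theta(n) = O(n^{-1})$. There is no real obstacle beyond setting up the split correctly; the crucial structural input is the $\mathcal F$-saturation bound $d_R(v) < m$ for $v \notin L$, which forces $d(v)$ to be an order of magnitude smaller than $\lambda_\alpha/\alpha \approx n$ and drives both passes.
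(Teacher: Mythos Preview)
Your proof is correct, and it uses the same structural inputs as the paper (the degree bound $d(v)\le k+m+\sqrt n$ on $S$ together with $\lambda_\alpha\ge\alpha(n-1)$), but the route is genuinely different. You bootstrap: a first pass with the crude bound $x_u\le\sigma$ on $S$ gives $x_v=O(n^{-1/2})$, and a second pass with this improved bound on $S$-neighbors yields $x_v=O(n^{-1})$.

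The paper instead avoids iteration by a standard $\arg\max$ trick: take $v$ to be a vertex in $S$ of \emph{maximum} eigenweight, so that every $S$-neighbor $u$ of $v$ satisfies $x_u\le x_v$. Then the eigenvector equation gives directly
\[
\lambda_\alpha x_v \le \alpha d(v)x_v + (1-\alpha)\bigl(k + (m+\sqrt n)x_v\bigr) \le k + (m+\sqrt n)x_v,
\]
whence $(\lambda_\alpha - m - \sqrt n)x_v \le k$ and $x_v = O(n^{-1})$ in one step; this bounds the maximum and hence all of $S$. The paper's argument is shorter and slightly sharper (it gives the explicit constant $k$), while your bootstrap is more mechanical and would generalize more readily to settings where picking the extremal vertex is less convenient. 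Either way the heart of the matter is the same: the gap $\lambda_\alpha - \alpha d(v) = \Theta(n)$ on $S$ is what does the work.
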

\begin{proof}
Let $v=\arg\max_{u\in S}x_u$. Then we have
$$\begin{aligned}
    \lambda_\alpha x_v&\le \alpha d(v)x_v+(1-\alpha)(k+(m+\sqrt n)x_v)\\
    &\le \alpha(k+m+\sqrt n)x_v+(1-\alpha)(k+(m+\sqrt n)x_v)\\
    &\le k+(m+\sqrt n)x_v
\end{aligned}$$
so
$$(\lambda_\alpha-m-\sqrt n)x_v\le k\Longrightarrow x_v\le\frac{k}{\lambda_\alpha-m-\sqrt n}=O(n^{-1}).$$
\end{proof}

\begin{lemma} \label{alpha E is empty} We have $E=\emptyset$.
\end{lemma}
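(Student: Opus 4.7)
The plan is to contradict the extremality of $G$ by performing a single edge-swap at a vertex $v\in E$ that strictly increases the $A_\alpha$-Rayleigh quotient. Suppose $E\ne\emptyset$ and fix $v\in E$; by definition $L\setminus N(v)\ne\emptyset$. Let $G'$ be obtained from $G$ by deleting every edge from $v$ to $E\cup R$ and adding every missing edge from $v$ to $L$, so that $N_{G'}(v)=L$. I will compare $x^{T}A_\alpha(G')x$ with $x^{T}A_\alpha(G)x$, where $x$ is the Perron vector of $G$.

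For an added edge $vw$ with $w\in L$, the contribution to $x^{T}A_\alpha x$ is $\alpha(x_v^{2}+x_w^{2})+2(1-\alpha)x_vx_w\ge\alpha(1-\alpha)^{2}$, by Lemma \ref{Perron entries of L}. For a deleted edge $vu$ with $u\in E\cup R\subseteq S$, the contribution lost is $\alpha(x_v^{2}+x_u^{2})+2(1-\alpha)x_vx_u=O(n^{-2})$, by Lemma \ref{alpha initial S eigenweight upper bound}. Since at least one edge is added and at most $n$ are deleted, the net change is at least $\alpha(1-\alpha)^{2}-O(n^{-1})>0$ for $n$ large. By the Rayleigh principle, $\lambda_\alpha(G')>\lambda_\alpha(G)=\mathrm{spex}_\alpha(n,\mathcal F)$, so $G'$ contains a copy of some $F_{0}\in\mathcal F$.

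Because $G'-v=G-v\subseteq G$ is $\mathcal F$-free, the embedded $F_{0}$ must use $v$; call the corresponding vertex $v^{*}\in V(F_{0})$. Since $N_{G'}(v)=L$, the $F_{0}$-neighbors of $v^{*}$ in the embedding lie in $V(F_{0})\cap L$. Now I pick any $u\in R\setminus V(F_{0})$, which is possible because $|R|\ge n-\sqrt{n}$ while $|V(F_{0})|$ is bounded (when $\mathcal F$ is finite, by $\max_{F\in\mathcal F}|V(F)|$). Since every vertex of $R$ is adjacent in $G$ to all of $L$, replacing $v$ by $u$ in the embedding produces a copy of $F_{0}$ inside $G$, contradicting $\mathcal F$-freeness of $G$.

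The main obstacle is the case in which $\mathcal F$ is infinite and $|V(F_{0})|$ is not a priori bounded. To handle it I would follow the template of Lemma \ref{lemma proof of common result}: iterate the swap over all of $E$ to produce $G^{(|E|)}$, in which each $v\in E$ now has $N_{G^{(|E|)}}(v)=L$, while $G^{(|E|)}[L\cup R]=G[L\cup R]$ remains $\mathcal F$-free. A forbidden copy $F_{0}\subseteq G^{(|E|)}$ must then use at least one vertex of $E$, and each such vertex has exactly the neighborhood $L$, so it can be swapped with an unused vertex of $R$ one at a time, using $|R|\ge n-\sqrt n$, to realize the copy of $F_{0}$ inside $G[L\cup R]\subseteq G$, again a contradiction.
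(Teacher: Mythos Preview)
Your argument for finite $\mathcal F$ is correct and is essentially the vertex-replacement trick from Case~1 of Lemma~\ref{lemma proof of common result}. The difficulty is that Theorem~\ref{thm spex to alpha} does \emph{not} assume $\mathcal F$ is finite, and your treatment of the infinite case has a genuine gap.

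In your last paragraph you iterate the swap over all of $E$ and then try to replace each vertex of $V(F_0)\cap E$ by an unused vertex of $R$. For this you need $|R\setminus V(F_0)|\ge |V(F_0)\cap E|$, i.e.\ $|V(F_0)\cap(R\cup E)|\le |R|$. But for infinite $\mathcal F$ nothing prevents $F_0$ from using \emph{all} of $R$ together with some vertices of $E$; in that case there are no unused $R$-vertices, and the replacement fails. Invoking $|R|\ge n-\sqrt n$ does not help, since $|V(F_0)|$ may be as large as $n$. You also cite Lemma~\ref{lemma proof of common result} as the template, but note that in Case~2 of that lemma the infinite-$\mathcal F$ argument is \emph{not} a vertex swap; it instead exploits the saturation of $H_m$ for all large $m$ (subcases (a)--(c)).

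The paper avoids the swap entirely in the second step. After the global modification to $G'$ (your $G^{(|E|)}$), one observes that $G'\not\subseteq H$ forces $e(R)\ge 1$ in case (a) and $e(R)\ge 2$ in case (b); combined with $G$ being $\mathcal F$-free and the saturation of $H_m$ for all large $m$, this yields $e(L)<\binom{k}{2}$. One then deletes all edges in $R$ and adds a missing edge inside $L$: the loss is $O(n^{-1/2})$, the gain is at least $(1-\alpha)^3$, and the resulting graph is a subgraph of $H$, hence $\mathcal F$-free, with strictly larger $A_\alpha$-Rayleigh quotient than $G$. This contradiction works uniformly for finite and infinite $\mathcal F$, and is the missing idea in your proposal.
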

\begin{proof}
Delete all edges between $E$ and $E\cup R$ and add all missing edges between $E$ and $L$, to obtain a graph $G'$. The deletions decrease $x^TA_\alpha x$ by at most
$$\alpha|E|(2m+\sqrt n)\cdot O(n^{-2})+(1-\alpha)|E|(m+\sqrt n)\cdot O(n^{-2})=o(1),$$

while the additions increase $x^TA_\alpha x$ by at least $\alpha|E|(1-\alpha)^2.$
If $E\ne\emptyset$ then the net change is positive. Hence, $G'\supseteq F_0$ for some $F_0\in\mathcal F$. This implies that $e(R)\ge 1$ in case (a) and $e(R)\ge 2$ in case (b). Either way, it follows that $e(L)<{k\choose 2}$, since $G$ is $\mathcal F$-free. Delete from $G'$ all edges in $R$ and add an edge in $L$, to obtain a subgraph of $H$. The deletions decrease $x^TA_\alpha x$ by at most
$$\alpha|R|\cdot O(n^{1/2})\cdot O(n^{-2})+(1-\alpha)|R|\cdot O(n^{1/2})\cdot O(n^{-2})=O(n^{-1/2})$$
while the addition increases $x^TA_\alpha x$ by at least $(1-\alpha)^3$, so the net change is again positive. Thus $\lambda_\alpha(G)<\lambda_\alpha(G')<\lambda_\alpha(H)$ which is a contradiction.
\end{proof}

\begin{lemma} \label{alpha left clique} The set $L$ induces a clique.
\end{lemma}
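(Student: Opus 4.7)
My plan is to derive a contradiction by constructing an $\mathcal F$-free graph $G'$ on $V(G)$ satisfying $x^T A_\alpha(G') x > x^T A_\alpha(G) x$; since $x$ is the Perron vector of $A_\alpha(G)$ this would give $\lambda_\alpha(G') > \lambda_\alpha(G)$ and contradict $G \in \mathrm{SPEX}_\alpha(n, \mathcal F)$. Suppose for contradiction that $L$ is not a clique, and let $e \ge 1$ denote the number of missing edges of $G[L]$. I would define $G'$ in two steps: first insert every missing edge within $L$ so that $G'[L] = K_k$, and then modify the edges within $R$ so that $e(G'[R]) = e(X)$, where $X = \overline{K_{n-k}}$ in case (a) and $X = K_2 \cup \overline{K_{n-k-2}}$ in case (b). Since $V(G) = L \cup R$ by Lemma \ref{alpha E is empty} and every vertex of $L$ is adjacent to every vertex of $R$, the resulting $G'$ is isomorphic to $H$ and therefore $\mathcal F$-free.

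Next I would quantify the change in $x^T A_\alpha x$. Adding (respectively deleting) an edge $uv$ alters $x^T A_\alpha x$ by $\pm\left[\alpha(x_u^2 + x_v^2) + 2(1-\alpha) x_u x_v\right]$. By Lemma \ref{Perron entries of L}, every $u, v \in L$ has $x_u, x_v \ge 1 - \alpha$, so each of the $e$ newly added $L$-edges contributes at least $2\alpha(1-\alpha)^2 + 2(1-\alpha)^3 = 2(1-\alpha)^2$ to $x^T A_\alpha x$. By Lemma \ref{alpha initial S eigenweight upper bound}, every $u, v \in R$ has $x_u, x_v = O(n^{-1})$, so each $R$-edge inserted or deleted contributes $O(n^{-2})$ in absolute value. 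Since $d_R(v) < m$ for all $v \in R$ (otherwise $G \supseteq K_{k+1,m} \supseteq F'$, contradicting Lemma \ref{size of color class of F}), we have $e(G[R]) = O(n)$, and hence the total $R$-contribution is $O(n^{-1})$. Combining these bounds,
$$x^T A_\alpha(G') x - x^T A_\alpha(G) x \;\ge\; 2e(1-\alpha)^2 - O(n^{-1}) \;>\; 0$$
for $n$ sufficiently large, which yields the desired contradiction.

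I do not foresee a serious obstacle, since the technical groundwork has already been carried out in the preceding lemmas: the set $L$ has exactly $k$ vertices, the exceptional set $E$ is empty, and the Perron entries on $L$ and on $R$ are separated by a factor of order $n$. The only subtlety is ensuring that the modified graph $G'$ is genuinely isomorphic to $H$ so that we may invoke its $\mathcal F$-freeness; this is automatic in both cases because $H$ is determined up to isomorphism by the data ``$L$ is a clique, $L \cup R$ is completely joined, and $e(G'[R]) = e(X) \in \{0, 1\}$.''
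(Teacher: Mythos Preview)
Your proposal is correct and follows essentially the same approach as the paper: assume $L$ is not a clique, transform $G$ into (a subgraph of) $H$ by adding the missing $L$-edges and adjusting the edges inside $R$, and compare the $\Theta(1)$ gain from the $L$-edges against the $O(n^{-1})$ total change from the $R$-edges. The paper simply deletes all $R$-edges and lands in $K_k+\overline{K_{n-k}}\subseteq H$ rather than reconstructing $H$ exactly, and bounds the gain per added $L$-edge by $(1-\alpha)^3$ instead of your sharper $2(1-\alpha)^2$, but these are cosmetic differences.
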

\begin{proof}
Suppose not. Then we transform $G$ into $K_k+\overline{K_{n-k}}$ by deleting all edges in $R$ and adding at least one edge in $L$. The deletions decrease $x^TA_\alpha x$ by at most
$$\alpha nm\cdot O(n^{-2})+(1-\alpha)\frac{nm}{2}\cdot O(n^{-2})=O(n^{-1}).$$
while the additions increase $x^TA_\alpha x$ by at least $(1-\alpha)^3$,
which is a contradiction.
\end{proof}

\begin{proof}[Proof of Theorem \ref{thm spex to alpha}] By Lemma \ref{alpha left clique}, we know $G\supseteq K_k+\overline{K_{n-k}}.$  In case (a), $K_k+\overline{K_{n-k}}$ is $\mathcal F$-saturated, hence $G=K_k+\overline{K_{n-k}}$. In case (b), there is at most one edge in $R$, which implies $G=K_k+(K_2\cup\overline{K_{n-k-2}})$.
\end{proof}

\section{Concluding remarks} \label{Section Conclusion}



Our aim in this paper was to study situations where $\mathrm{SPEX}(n,\mathcal F)\subseteq \mathrm{EX}(n,\mathcal F)$ or $\mathrm{SPEX}_\alpha(n,\mathcal F)\subseteq\mathrm{SPEX}(n,\mathcal F)$. In fact, we have given `recipes' which allow one to determine these extremal graphs in more general situations. Many more applications of these results could be stated, and in our list of applications we have restricted ourselves to families for which the edge extremal graphs were studied in previous papers. There are also applications which follow easily from the methods of this paper but are not strictly consequences of our main theorems. We list some such cases below.
\begin{itemize}
    \item For certain forbidden families $\mathcal F$, our methods can prove somewhat stronger results than what we have stated. First, for some infinite families $\mathcal F$ there is an alternative proof of Lemma \ref{lemma proof of common result} even if $H$ is of the form $K_k+X$ where $e(X)\ge 2$. In such cases, we may remove the assumption that $\mathcal F$ is finite from Theorem \ref{Thm big ex spex} (d) and (e). Second, if we have the characterization
    $$K_k+Y\text{ is }\mathcal F\text{-free}\Longleftrightarrow Y\subseteq Z\text{ for some }K_k+Z\in\mathrm{EX}^{K_k+\overline{K_{n-k}}}(n,\mathcal F)$$
    then we may strengthen the conclusion of Theorem \ref{Thm big ex spex} (e) to $\mathrm{SPEX}(n,\mathcal F)\subseteq\mathrm{EX}^{K_k+\overline{K_{n-k}}}(n,\mathcal F)$ and also prove that $\mathrm{SPEX}_\alpha(n,\mathcal F)\subseteq\mathrm{EX}^{K_k+\overline{K_{n-k}}}(n,\mathcal F)$. We list some additional applications which make use of these facts.
    \begin{itemize}
        \item $\mathrm{SPEX}(n,\{C_4,C_5,\ldots\})=K_1+M_{n-1}.$ This also follows from the spectral extremal result for $C_4$ \cite{Nikiforov07}.
        \item $\mathrm{SPEX}_\alpha(n,C_4)=K_1+M_{n-1}$ \cite{TCC}.
        \item $K_{2k-1}+M_{n-2k+1}$ has the largest (alpha) spectral radius without $k$ disjoint even cycles.
        \item $K_3+M_{n-3}$ has the largest (alpha) spectral radius without 2 disjoint theta graphs; see \cite{gao2015extremal} for the definition and required extremal result.
        \item If $F=\bigcup_{i=1}^kK_{1,d_i}$ is a star forest, then any graph $\mathrm{SPEX}_\alpha(n,F)$ is the join of $K_{k-1}$ and an almost $(d_k-1)$-regular graph; in particular, if $n-k+1$ is even $\mathrm{SPEX}_\alpha(n,F)=\mathrm{SPEX}(n,F)$. These results where shown in \cite{CLZ3}.
        \item $K_2+P_{n-2}$ has the largest spectral radius among planar graphs \cite{tait2017three}. Here we may take $\mathcal F$ to be the set of topological subdivisions of $K_{3,3}$ or $K_5$ \cite{kuratowski1930probleme}. We require one additional modification of the proof: in Lemma \ref{lemma proof of common result}, after obtaining the graph $G^{(i)}$, if the two vertices in $L'$ are not adjacent, then we can delete at most one edge in $R$ and add the edge in $L'$ while preserving planarity of the graph induced by $L'\cup R$ and increasing $x^TAx$. One can check that the entire graph must now be planar.
        \item $K_1+P_{n-1}$ has the largest spectral radius among outerplanar graphs \cite{tait2017three}. Here we may take $\mathcal F$ to be the set of topological subdivisions of $K_{2,3}$ or $K_4$. In Lemma \ref{lemma proof of common result}, we use the fact that adding pendant vertices preserves outerplanarity.
    \end{itemize}
    \item In some cases our methods work for extremal graphs of the form $W+\overline{K_{n-|W|}}$, where $W$ is a fixed graph which is neither empty nor complete. For example, if $\mathcal F$ is finite and every graph in $\mathrm{EX}^{K_{|W|,n-|W|}}(n,\mathcal F)$ contains $\mathrm{ex}(|W|,\mathcal F)$ edges in $W$ and no edges in $V-W$, then under the other assumptions of Theorem \ref{Thm big ex spex}, we have that any graphs in $\mathrm{SPEX}(n,\mathcal F)$ are $W+\overline{K_{n-|W|}}$ where $W\in\mathrm{EX}(n,|W|)$. As consequence, we obtain the spectral analogues of Theorems 1.6 and 1.7 of \cite{LK}.
    \item As mentioned in the introduction, when $\mathrm{SPEX}(n,\mathcal F)=K_{k,n-k}$ then under the assumptions of Theorem \ref{thm spex to alpha} we can show that $\mathrm{SPEX}_\alpha(n,\mathcal F)=K_{k,n-k}$ for a certain range of $\alpha$. However, additional arguments would be needed to characterize $\mathrm{SPEX}_\alpha(n,\mathcal F)$ for every $\alpha\in[0,1)$.
\end{itemize}

Additionally, when our methods do not determine the extremal graphs exactly, one can sometimes still obtain general structural results. For example, let $\mathcal F$ be any finite family of graphs such that $\mathrm{ex}(n,\mathcal F)=O(n)$. Let $k=\max\{\ell:K_{\ell,\infty}\text{ is }\mathcal F\text{-free}\} <\infty.$ Then the proof of Proposition \ref{common result} gives that any $G\in\mathrm{SPEX}(n,\mathcal F)$ has $G\supseteq K_{k,n-k}$. Moreover, any vertex in the partite set of $G$ of size $n-k$ has degree at most $k+m$, where $m$ is the smallest minimum degree of a vertex in the partite set of size $k+1$ of any $F\in\mathcal F$ satisfying $F\subseteq K_{k+1,\infty}$.

We end by listing some open questions.
\begin{itemize}
    \item To prove Theorem \ref{Thm big ex spex} in its generality we require that $\mathrm{ex}(n, \mathcal{F}) = O(n)$. However, many of the techniques in the proof were originally discovered in the proof of the spectral even-cycle theorem \cite{cioabua2022evencycle} where the Tur\'an number is much larger. When considering a specific forbidden family, the structure of the forbidden graph(s) can be used and in some cases the condition on the Tur\'an number can be relaxed. When $\alpha=0$, perhaps the assumption that $F-v$ is a forest for some bipartite $F\in\mathcal F$ could be useful. For example, it would be interesting to try to prove a spectral version of the general result of Faudree and Simonovits \cite{FS}. In any case, it seems possible that our assumption $\mathrm{ex}(n,\mathcal F)=O(n)$ could be weakened or modified, and one could obtain a general result which implies, e.g., the spectral even cycle theorem. 
    \item For the case $\alpha>0$, we do not know whether the assumption that $F-v$ is a forest could be weakened. For example, if one may assume only that $F-\{v_1,\ldots,v_k\}$ is a forest then perhaps one could determine the alpha spectral extremal graphs for $k\cdot C_{2\ell}$.
    \item The counterexample we constructed to prove Proposition \ref{counterexample} is quite complicated and artificial. With respect to the two extremal graphs $H=K_2+(P_8\cup\frac{n-10}{4}\cdot P_4)$ and $G=K_2+\frac{n-2}{4}\cdot K_{1,3}$ obtained in the proof of Proposition \ref{counterexample}, it is natural to pose the following question: does there exist a graph $F$ such that $\mathrm{EX}(n,F)=H$ and $\mathrm{SPEX}(n,F)=K$ for infinitely many $n$? Failing a positive answer to this question, the following problem may be interesting: what is the supremum of constants $Q$ such that for any graph $F$, $\mathrm{EX}(n,F)\ni K_k+X$ with $e(X)\le Qn+O(1)$ implies $\mathrm{SPEX}(n,F)\subseteq\mathrm{EX}(n,F)$ for large $n$?
    \item The only step in the proof of Theorem \ref{Thm big ex spex} which prevents us from allowing $\mathcal F$ to be infinite in all cases is Lemma \ref{lemma proof of common result}. We were unable to modify our arguments to work for all infinite graph families or to find a counterexample showing this cannot be done. In some cases (such as the additional applications listed above), the nature of the graph family $\mathcal F$ allows one to prove Lemma \ref{lemma proof of common result} without resorting to the argument we use for cases (a)-(c). Ad-hoc arguments of this kind may work for many other natural infinite families.
    \item In cases (d) and (e) of Theorem \ref{Thm big ex spex}, it is not too difficult to show that if $Q<1$, then there is some $R$ such that $e(X)=Rn+O(1)$. Can this be shown for $Q\ge 1$?
    \item As mentioned in the introduction, the extremal graphs covered by Theorem \ref{thm spex to alpha} are more restricted than those covered by Theorem \ref{Thm big ex spex}. Our methods should give some information about $\mathrm{SPEX}_\alpha(n,\mathcal F)$ when $\mathcal F$ is as in cases (a), (d), (e), or (f) of Theorem \ref{Thm big ex spex}, but we have not attempted to determine the strongest possible statement one can prove using our methods. 
    \item When $G=K_{k,n-k}$, we have seen that the extremal function $\mathrm{ex}^G(n,\mathcal F)$ is relevant to determining the spectral extrema for many families of forbidden graphs. Perhaps this motivates further study of this or other `restricted extremal problems'; in particular, it would be interesting to see if there are cases in which determination of $\mathrm{EX}^{K_{k,n-k}}(n,\mathcal F)$ is nontrivial while also providing an application of Theorem \ref{Thm big ex spex}.
\end{itemize}

\bibliographystyle{plain}
\bibliography{references}

\end{document}